\theoremstyle{definition}
\newtheorem{definition}{Definition}[section]
\newtheorem{remark}{Remark}[section]
\theoremstyle{theorem}
\newtheorem{theorem}{Theorem}[section]
\newtheorem{lemma}{Lemma}[section]
\newtheorem{proposition}{Proposition}[section]
\newtheorem{corollary}{Corollary}[section]
\def\rg{\mathscr{R}}
\def\slbf#1{\text{\boldmath$#1$}}
\def\boundaries{\mathscr C}
\def\shift{\mathscr S}
\title[Dynamical systems around the Rauzy gasket]{Dynamical systems around the Rauzy gasket and
their ergodic properties}
\author{Ivan Dynnikov}
\address{\noindent Steklov Mathematical Institute of Russian Academy of Science, 8 Gubkina Str., Moscow 119991, Russia}
\email{dynnikov@mech.math.msu.su}
\author{Pascal Hubert}
\address{Institut de Math\'ematiques de Marseille, 39 rue F. Joliot-Curie, 13453 Marseille Cedex 20, France}
\email{pascal.hubert@univ-amu.fr}
\author {Alexandra Skripchenko}
\address{Faculty of Mathematics, National Research University Higher School of Economics, 6 Usacheva Str., 119048 Moscow, Russia \textit{and}
Skolkovo Institute for Science and Technology, Skolkovo Innovation Center, 143026 Moscow, Russia}
\email{sashaskrip@gmail.com}
\begin{document}
\begin{abstract}
At the beggining of the 80's,  H.\,Masur and W.\,Veech started the study of generic properties of interval exchange transformations proving that almost every such transformation is uniquely ergodic.
About the same time, S.\,Novikov's school
and French mathematicians
independently discovered very intriguing phenomena for classes of measured foliations on surfaces and respective IETs.
For instance,  minimality is exceptional in these families. A~precise version of this statement is  a conjecture by Novikov.  
The French and Russian constructions are very different ones.  Nevertheless, in the most simple situation (surfaces of genus three with two singularities) it was recently observed that both foliations share the same type of properties.  For instance, the space of minimal parameters is the same, called the Rauzy gasket. However, the precise connection between these two series of works was rather unclear.
The aim of this paper is to prove that both theories describe in different languages the same objects.  This text provides an explicit dictionary between both constructions.\end{abstract}

\maketitle

\tableofcontents

\section{Introduction}

It is known since the work of M.\,Keane~\cite{K} that interval exchange transformations (IET)
with irreducible permutation and parameters independent over~$\mathbb Q$ are minimal.
H.\,Masur~\cite{M} and W.\,Veech~\cite{V} have shown that almost all such transformations
are uniquely ergodic. However, if integral linear restrictions are
imposed on the parameters, then none of these results applies to the obtained family
of IETs, and there are families of this kind in which minimality and unique ergodicity become
exotic properties rather than generic ones.

In this paper, we discuss a family of IETs and respective singular measured foliations
on a surface of genus three. This family is apparently the simplest one among
those defined by linear restrictions on the parameters in which the subspace
of minimal (or uniquely ergodic) objects is a fractal subset. Several instances
of this family have been appeared previously in the literature, which we now overview.

The most celebrated example of a foliation from the class we are going to discuss
was described by P.\,Arnoux and J.-C.\,Yoccoz in 1981~\cite{AY}. They constructed a pseudo-Anosov homeomorphism of~$\mathbb RP^2$ and the stable foliation for it. Then they considered 
a ramified orientable  cover of~$\mathbb RP^2$ on which the foliation also became
orientable, and described this foliation in the language of interval exchange transformations.
The obtained IET was, naturally, self-similar in the sense that there was a rescaling
map conjugating this IET to its first return map to a smaller interval.
Among the seven parameters of the transformation only three were rationally independent.
(The construction of~\cite{AY} is for an arbitrary genus~$g\geqslant2$, but here we discuss only the case~$g=3$.)

There is a natural generalization of Arnoux--Yoccoz' construction, which gives rise to a family~$(\Sigma_\lambda,\mathscr F_\lambda)$ of surfaces of genus three endowed with a singular measured foliation
depending on three parameters~$\lambda=(\lambda_1,\lambda_2,\lambda_3)$.

In 1982 S.\,Novikov posed a question about asymptotic behavior of plane sections of
3-periodic surfaces in the three-space~\cite{N}. By a 3-periodic surface we mean a surface in~$\mathbb R^3$
invariant under shifts by all vectors from the lattice~$\mathbb Z^3$. The surfaces of interest
are also such that their projections to the 3-torus~$\mathbb T^3=\mathbb R^3/\mathbb Z^3$
are null-homologous.
Novikov's question was motivated by an application to the conductivity theory of monocrystals:
the investigated surface was a Fermi surface of a metal while the direction of the considered planes was orthogonal to the magnetic field.

Novikov's problem admits a natural formulation in terms of singular measured foliations on surfaces.
The foliations are defined by the restriction of
a differential 1-form on~$\mathbb T^3$ with constant coefficients to a null-homologous surface.
In 1984, A.\,Zorich~\cite{Zo} made the first step in the study of this problem
dealing with the case of a magnetic field close to rational.

Few years later, a more detailed study was undertaken by I.\,Dynnikov~\cite{D01,D02,D1}
(one interesting phenomenon was also discovered by~S.\,Tsarev),
which resulted in a qualitative classification of foliations arising
in Novikov's problem.
The only case that remained not fully understood dealt with so called chaotic regimes,
which were characterized by the presence of a minimal component of the foliation having
genus larger than one (in which case it is at least three).
As was discovered later, the Arnoux--Yoccoz foliation is isomorphic to one of the foliations
that appear as a chaotic regime in Novikov's problem.

The method described in~\cite{D1,D08} for constructing chaotic regimes in Novikov's problem
is based on a relation between two-dimensional CW-complexes bearing a measured
foliation and singular measured foliations on surfaces. It is used to produce
a surface with a singular measured foliation on it from a system of isometries
endowed with a little more combinatorial data so that the dynamical properties
of the foliation and those of the system of isometries are related in a certain way (which
is not fully understood in general). The construction has been given the name
`double suspension surface' in~\cite{DS2016} and used in~\cite{DS1,DS} to construct
exotic chaotic regimes in Novikov's problem.

In 1988, P.\,Arnoux~\cite{A} provided an explicit construction of a suspension surface associated with the
Arnouz--Yoccoz IET described in \cite{AY}. It is worth noting that his construction is quite different from the classical one (which is based on by zippered rectangles).

In 1991, the Arnouz--Yoccoz IET was generalized by P.\,Arnoux and G.\,Rauzy~\cite{AR}.  When working on a question in symbolic dynamics (namely, on a natural generalization of Sturmian words),  they described a family of IETs acting on a circle divided into six intervals. In this paper, we denote the surfaces endowed with a singular
measured foliation corresponding to the Arnoux--Rauzy IETs by~$(\Sigma_\lambda^{\mathrm{AR}},\mathscr F_\lambda^{\mathrm{AR}})$, where~$\lambda$ runs over the parameter space.

This class of IETs is identified by a special combinatorics, as well as a very particular choice of the lengths of the intervals: first, only three of them are rationally independent, but even in this three-dimensional parameter space
(two-dimensional after a normalization), Arnoux--Rauzy IET are given by a very special subset. This subset,
denoted here by~$\rg$, has a fractal structure and has been named \emph{the Rauzy gasket} in~\cite{AS}.

The Rauzy gasket has also appeared in mathematics quite a few times. In 1993, G.\,Levitt studied pseudogroups of rotations on a circle and described the Rauzy gasket as a subset of the parameter space that gave rise to minimal dynamical systems of this class~\cite{L}. Later, the same fractal was rediscovered by I.\,Dynnikov and R.\,De Leo in connection with Novikov's problem~\cite{DD}. They considered singular measured foliations on a specific PL-surface (so called the regular skew polyhedron~$\{4,6\,|\,4\}$), for which
we use the notation~$(\Sigma^{\mathrm{PL}},\mathscr F_\lambda^{\mathrm{PL}})$ in this paper.
Also, in \cite{AS} the Rauzy gasket was studied from the point of view of symbolic dynamics (Arnoux--Rauzy words etc) and multidimensional fraction algorithms (the fully subtractive algorithm and a dual one).

In all these papers it is proved that the Rauzy gasket has zero Lebesgue measure, and all these proofs are morally very different. Namely, the proof in \cite{AS} relies on a classical argument from ergodic theory and is in fact due to \cite{MeNo}, the proof in~\cite{DD} uses some tools from measure theory while the proof in \cite{L} is attributed to J.-C.\,Yoccoz and is very original in its spirit. 

In \cite{AHS1}, the Rauzy gasket is defined as a subset of the parameter space for certain systems of isometries of order three. These systems are denoted~$S_\lambda$ here,
where~$\lambda$ runs over the parameter space. The points of~$\rg$ are singled out by
the condition that the corresponding systems of isometries are of thin (exotic) type. It is also shown there that the Hausdorff dimension of this set is strictly less than two.

In~\cite{AHS2}, an invariant ergodic
probability measure (denoted by~$\mu_\rg$ in the present paper)  on~$\mathscr R$ consistent with the normalization process that is used to define~$\rg$
has been constructed. A particular case of the construction from~\cite{D08}
relating singular measured foliations on surfaces with systems of isometries
and the technique of O.\,Sarig~\cite{S0,S2}
are used in~\cite{AHS2} to study dynamical properties of trajectories
in a special case of Novikov's problem.

In the current paper we outline all the constructions mentioned above and explain why all of them lead to the same family of foliations. 

We start in Section~\ref{RG} with the definition of the Rauzy gasket. In Section~\ref{AR-sec}, we define the Arnoux--Rauzy IETs and the corresponding suspension
surfaces endowed with a singular measured foliation~$(\Sigma_\lambda^{\mathrm{AR}},\mathscr F_\lambda^{\mathrm{AR}})$.  The relation to symbolic dynamical systems
(namely, Arnoux--Rauzy words) is indicated in Section~\ref{words-seq}. In Section~\ref{foliation-sec}, we explain how to obtain the singular measured foliations associated with the Arnoux--Rauzy IETs as a cover~$(\Sigma_\lambda,\mathscr F_\lambda)$ of singular measured foliations on the projective plane ${\mathbb R}P^2$ (this is a natural generalization of the Arnoux--Yoccoz construction). In Section~\ref{Novikovproblem}, we describe the construction of chaotic foliations~$\mathscr F_\lambda^{\mathrm{PL}}$
on a piecewise linear surface~$\Sigma^{\mathrm{PL}}$ in Novikov's problem
which are introduced in \cite{DD}.
Finally, in Sections~\ref{systemsofisometries} and \ref{double-susp-sec},
we describe a family~$(S_\lambda,\theta_\lambda)$ of enhanced systems of isometries
and the double suspension surface construction for them, which produces
yet another family of surfaces endowed with a singular measured foliation.

Propositions~\ref{ar=rp2-prop}, \ref{pl=ar-prop}, and~\ref{double=ar-prop}
stated and proven below can be summarized as follows.
\begin{theorem}\label{equivalence}
For any~$\lambda\in\Delta\setminus\partial\Delta$, the foliations~$\mathscr F_\lambda^{\mathrm{AR}}$,
$\mathscr F_\lambda$, $\mathscr F_\lambda^{\mathrm{PL}}$, as well as the one
obtained by means of the double suspension surface construction from~$(S_\lambda,\theta_\lambda)$,
are all equivalent.
\end{theorem}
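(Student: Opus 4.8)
The plan is to treat Theorem~\ref{equivalence} as a formal consequence of the three pairwise statements announced just above it, reserving the genuine work for those propositions. First I would fix precisely the notion of equivalence in play: two singular measured foliations are equivalent when there is an orientation-preserving homeomorphism between the underlying closed surfaces carrying one foliation, together with its singular set and transverse measure, onto the other. With this definition equivalence is manifestly reflexive, symmetric, and transitive. Since Propositions~\ref{ar=rp2-prop}, \ref{pl=ar-prop}, and~\ref{double=ar-prop} each relate one of the competing constructions --- the $\mathbb{RP}^2$-cover foliation $\mathscr{F}_\lambda$, the piecewise-linear foliation $\mathscr{F}_\lambda^{\mathrm{PL}}$, and the double suspension of $(S_\lambda,\theta_\lambda)$ --- to the single reference foliation $\mathscr{F}_\lambda^{\mathrm{AR}}$, chaining these three equivalences through $\mathscr{F}_\lambda^{\mathrm{AR}}$ immediately yields that all four foliations lie in one equivalence class for every interior parameter. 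Thus the theorem reduces to the transitivity remark, and the substance is entirely contained in the propositions.

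For those propositions the uniform strategy I would adopt is to produce, in each case, an explicit identification rather than an abstract existence argument. The cleanest route is via a common transversal: in each construction one exhibits a distinguished arc (or closed curve) transverse to the foliation whose first-return map is, after the evident relabelling of intervals and up to the normalization that defines $\Delta$, exactly the Arnoux--Rauzy IET with parameter $\lambda$. One then invokes the fact that a singular measured foliation is recovered up to equivalence from the conjugacy class of this return map together with the combinatorial suspension data recording how the transversal sits in the surface (the gluing pattern of the separatrices). Checking that these suspension data coincide across the constructions turns the matching of return maps into a homeomorphism of the full surfaces carrying the foliations; where an explicit homeomorphism is easier to write down --- as for the generalized Arnoux--Yoccoz cover in Proposition~\ref{ar=rp2-prop} --- I would simply construct it directly and verify that it sends leaves to leaves and transverse measure to transverse measure.

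The main obstacle, and the place demanding the most care, is not the topological matching of leaves but the bookkeeping that guarantees the identifications respect the \emph{transverse measure} and the \emph{singularity structure} simultaneously. The four surfaces are a priori built with different charts, different numbers of rectangles or polygons, and different conventions for orienting the foliation; one must verify that the transverse measures agree under the chosen parametrization and that the saddle connections and the orders of the singularities correspond, so that in particular all four surfaces are genuinely of genus three in the same stratum. This is precisely why the hypothesis $\lambda\in\Delta\setminus\partial\Delta$ is essential: on $\partial\Delta$ some interval lengths degenerate, singularities collide or separatrices merge, and the constructions no longer produce the same combinatorial type, so the equivalence can break down. I would therefore organize each proposition's proof so that the non-degeneracy of $\lambda$ is used exactly at the step where the singularity data are shown to match, and confirm that away from the boundary this matching is automatic.
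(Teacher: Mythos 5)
Your reduction of the theorem itself is exactly the paper's: the authors explicitly present Theorem~\ref{equivalence} as a summary of Propositions~\ref{ar=rp2-prop}, \ref{pl=ar-prop}, and~\ref{double=ar-prop}, all stated relative to the single reference pair $(\Sigma_\lambda^{\mathrm{AR}},\mathscr F_\lambda^{\mathrm{AR}})\cong(\Sigma_\lambda,\mathscr F_\lambda)$, chained by transitivity. Your uniform strategy for the propositions --- exhibit a closed transversal whose Poincar\'e map is $T_\lambda^{\mathrm{AR}}$ and invoke recovery of the foliation from the return map plus suspension data --- is also precisely how the paper proves Proposition~\ref{ar=rp2-prop} (the transversal $\gamma$ in Figure~\ref{4-fold-cover-fig}) and Proposition~\ref{double=ar-prop} (the transversal in Figure~\ref{sigma-trans-fig}, with the measure doubled). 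Where you genuinely diverge is Proposition~\ref{pl=ar-prop}: rather than hunting for a transversal with the right return map inside the PL surface, the paper quotients $\Sigma^{\mathrm{PL}}$ by the $\mathbb Z_2\times\mathbb Z_2$ action of~\eqref{action-eq}, identifies the quotient foliation with $\mathscr F_\lambda^0$ on $\mathbb RP^2$, and then appeals to the characterization of $\Sigma_\lambda$ as \emph{the} orientable branched cover of smallest genus on which the foliation becomes orientable. This symmetry argument is what makes the PL case tractable: a direct return-map computation on the twelve square faces would be laborious, whereas the quotient immediately reduces the comparison to the $\mathbb RP^2$ model common to both constructions; conversely, your transversal-based route has the merit of uniformity but would require you to locate the analogue of $\gamma$ explicitly in $\Sigma^{\mathrm{PL}}$ and match separatrix combinatorics by hand. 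One small correction: the hypothesis for the PL proposition in the paper is $\lambda\notin\{(1,0,0),(0,1,0),(0,0,1)\}$, weaker than $\lambda\notin\partial\Delta$, so the interior hypothesis of the theorem is used only to make all four constructions simultaneously nondegenerate (genus three, two monkey saddles), as you correctly anticipate.
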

\begin{remark}
Recently, P.\,Hubert and O.\,Paris-Romaskevich found out that the Rauzy gasket also emerges in connection with tiling billiards~\cite{HPR}. The detailed description of the connection between tiling billiards and Novikov's problem will be explained in the forthcoming paper \cite{DHMPRS}.

\end{remark} 
Using this equivalence, we also refine the ergodic properties of the foliations from the discussed family. Namely, 
Corollaries~\ref{main-coro} and~\ref{two-measure-cor} below can be summarized as follows.

\begin{theorem}\label{uniqueergodicity} 
Almost all \emph(with respect to the measure $\mu_\rg$ mentioned above\emph) Arnoux--Rauzy IETs are uniquely ergodic, and non of them admits more than two invariant ergodic measures. 
\end{theorem}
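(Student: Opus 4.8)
The plan is to push everything onto the renormalization dynamics of the Rauzy gasket and to recover the invariant measures of an IET as the limit of a nested family of cones. By Theorem~\ref{equivalence} it is enough to treat the Arnoux--Rauzy IETs directly. For $\lambda\in\Delta\setminus\partial\Delta$ the Arnoux--Rauzy induction yields an infinite itinerary in the three letters $1,2,3$; I encode it as the shift $\shift\colon\rg\to\rg$ equipped with the cocycle $\lambda\mapsto A(\lambda)\in SL(3,\mathbb Z)$, where $A(\lambda)$ is the non-negative substitution matrix of the first letter. The measure $\mu_\rg$ of~\cite{AHS2} is $\shift$-invariant and ergodic. Setting $A^{(n)}(\lambda)=A(\lambda)A(\shift\lambda)\cdots A(\shift^{\,n-1}\lambda)$ and letting these matrices act projectively on the $2$-simplex $\Delta$, the normalized invariant measures of the IET are exactly the points of the nested intersection $K(\lambda)=\bigcap_{n\geqslant0}A^{(n)}(\lambda)\,\Delta$, and its extreme points are the ergodic ones. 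The theorem thus splits into a deterministic statement --- $K(\lambda)$ has at most two extreme points for every $\lambda$ --- and an almost-everywhere one --- $K(\lambda)$ is a single point for $\mu_\rg$-a.e.\ $\lambda$.

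I would prove the bound of two ergodic measures by an elementary area estimate. Projectively, each $A_i$ carries $\Delta$ onto the ``corner'' triangle at the vertex $v_i$ fixed by $A_i$, and a direct computation gives the Jacobian of this projective map as $(1+y+z)^{-3}\in[\tfrac18,1]$, equal to $1$ only at $v_i$ itself. Writing $A^{(n)}$ as a composition, its Jacobian at any point is the product of such factors evaluated along the orbit; crucially, whenever two consecutive letters differ, the intermediate point lies in the corner of the later letter, hence away from the vertex of the current letter, forcing that Jacobian factor below a constant $c<1$ independent of $\lambda$ and $n$. Since every $\lambda$ in the interior of $\Delta$ uses all three letters infinitely often, there are infinitely many letter changes, hence the Jacobian of $A^{(n)}$ tends to $0$ uniformly and $\operatorname{area}(A^{(n)}(\lambda)\Delta)\to0$. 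Therefore $K(\lambda)$ is a compact convex set of zero area, i.e.\ it is contained in a line segment and has at most two extreme points. This already shows that no Arnoux--Rauzy IET carries more than two ergodic invariant probability measures.

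For unique ergodicity it remains to show that for $\mu_\rg$-a.e.\ $\lambda$ the nested triangles $A^{(n)}(\lambda)\Delta$ collapse not merely in area but in diameter. This is governed by the Lyapunov exponents $\theta_1\geqslant\theta_2\geqslant\theta_3$ of the cocycle $A$ over $(\rg,\shift,\mu_\rg)$ (with $\theta_1+\theta_2+\theta_3=0$): a singular-value computation shows that the projective diameter of $A^{(n)}(\lambda)\Delta$ decays like $e^{-n(\theta_1-\theta_2)}$, so $K(\lambda)$ reduces to a point for a.e.\ $\lambda$ as soon as the top exponent is simple, $\theta_1>\theta_2$. Equivalently, in the corner-map picture, diameter contraction is uniform over any block that contains a letter change and whose single-letter runs are bounded, and one must show that for $\mu_\rg$-a.e.\ itinerary such good blocks occur with positive frequency. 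I would obtain this from the ergodicity of $\mu_\rg$ together with log-integrability of the run lengths --- equivalently $\mu_\rg$-integrability of $\log\|A\|$ --- which is precisely what the coding by a countable Markov shift and the tail estimates of Sarig's thermodynamic formalism used in~\cite{AHS2} provide.

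The main obstacle is exactly this almost-everywhere diameter collapse, that is, the simplicity $\theta_1>\theta_2$ for the fractal measure $\mu_\rg$. The cocycle over the Rauzy gasket is not uniformly hyperbolic: the images $A_i(\Delta)$ touch $\partial\Delta$ at the fixed vertices, so contraction degenerates along long runs of a single letter --- exactly the mechanism producing the (measure-zero) non-uniquely-ergodic examples --- and no uniform Birkhoff--Hilbert contraction is available. Making $\theta_1>\theta_2$ rigorous requires, on the one hand, proximality and strong irreducibility of the monoid generated by $A_1,A_2,A_3$ (which follows since any product containing all three letters is strictly positive), and, on the other hand, quantitative control of the tails of the run lengths against $\mu_\rg$. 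This second point, where the measure's zero Lebesgue measure and its cusp behavior interact, is the delicate step, and it is where I expect the estimates imported through Theorem~\ref{equivalence} from the Sarig-based analysis of~\cite{AHS2} to carry the argument.
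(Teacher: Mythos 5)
Your plan contains a structural error that breaks both halves of the statement: the $3\times3$ substitution cocycle does not parametrize the invariant measures of the IET. The nested intersection $K(\lambda)=\bigcap_n f_{i_1\ldots i_n}(\Delta)$ built from the matrices $M_i$ acting projectively on the $2$-simplex records only letter frequencies, i.e.\ invariant measures of the symbolic Arnoux--Rauzy system, which is a proper factor of $T^{\mathrm{AR}}_\lambda$. In fact, for every $\lambda\in\rg_{\mathrm{irr}}$ this intersection is already a single point, namely $\{\lambda\}$ --- this is exactly what is proved in part (ii) of Proposition~\ref{regular-rauzy-prop}, and it matches Boshernitzan's theorem \cite{B} that \emph{all} Arnoux--Rauzy words are uniquely ergodic. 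So your criterion ``invariant measures of the IET $=$ points of $K(\lambda)$'' would yield unique ergodicity of \emph{every} $T^{\mathrm{AR}}_\lambda$ with $\lambda\in\rg_{\mathrm{irr}}$, contradicting Proposition~\ref{non-uniquely-ergodic-prop} (the examples of \cite{DS}). The point you are missing is that an invariant measure of the IET need not assign equal mass to the paired intervals $[0,\lambda_1/2]$ and $[\lambda_1/2,\lambda_1]$, etc.; capturing this requires the seven parameters $x_1,\ldots,x_7$ and the matrices $\widetilde N_i$ (reduced to the $6\times6$ matrices $N_i$ after showing $x_7=0$) used in the proof of Theorem~\ref{typically-uniquely-ergodic-th}. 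Your $3\times3$ picture sees only the measures fixed by the involution $(12)(34)(56)$.

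This error then propagates. For the bound of two ergodic measures: in the correct setting the nested cones live in $\mathbb R^6_{>0}$ (projectively, a $5$-simplex), where ``zero volume'' does not limit the number of extreme points to two; the a priori bound coming from genus $3$ is \emph{three} ergodic measures, and cutting three down to two is precisely the nontrivial content of the paper's argument --- either via Theorems~\ref{s-lambda-th} and~\ref{ergodicity-dim-th} (the system of isometries $S_\lambda$ is of thin type and uniquely ergodic, and each orbit-tree has at most two topological ends, hence lifts to at most two leaves of the foliation), or via the involution $\Psi$ together with unique ergodicity of the symbolic factor. Your Jacobian/area estimate, even granting its computations, supplies no substitute for this. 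For the almost-everywhere part: besides running on the wrong cocycle, the key step (simplicity $\theta_1>\theta_2$ and $\mu_\rg$-integrability of $\log\|A\|$) is asserted rather than proved, deferred to estimates you hope to import from \cite{AHS2}. The paper's route needs none of this: by ergodicity of $F$ and positivity of $\mu_\rg$ on open sets (Theorem~\ref{measure-properties-th}), $\mu_\rg$-a.e.\ directing word is recurrent, and recurrence forces a fixed block (e.g.\ $12312$, which makes products of the $N_i$ strictly positive) to occur infinitely often; Hilbert-metric contraction in the $6$-dimensional cone then gives unique ergodicity directly, with no Lyapunov exponents or integrability hypotheses at all.
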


The existence of Arnoux--Rauzy IETs with two different invariant probability
measures is established in \cite{DS}.
On the other hand, it is known (see \cite{B}) that \emph{all} Arnoux--Rauzy words are uniquely ergodic. There is no contradiction with our Theorem \ref{equivalence}, since
the symbolic dynamical system associated with an Arnoux--Rauzy word is in general only a factor of the respective Arnoux--Rauzy IET. A one-to-one correspondence between an Arnoux--Rauzy IET and its combinatorial model is proved in \cite{ACFH} under a diophantine condition. This result provides a strong version of Theorem \ref{uniqueergodicity} with a very different proof based on combinatorial arguments. 

\section*{Acknowledgements} The third author was partially supported by RFBR-CNRS grant No. 18-51--15010.

\section{The Rauzy gasket}\label{RG}
Denote by~$\Delta$ the standard two-simplex, and by~$\lambda_1,\lambda_2,\lambda_3$ the barycentric coordinates on it:
$$\Delta = \{(\lambda_1, \lambda_2, \lambda_3)\in \mathbb{R}^3_{\geqslant 0}: \lambda_1+ \lambda_2 +\lambda_3 =1\}.$$
Let~$L_1$, $L_2$, $L_3$ be the linear maps~$\mathbb R^3_{\geqslant0}\rightarrow\mathbb R^3_{\geqslant0}$
defined by the matrices
$$M_1=\begin{pmatrix}1&1&1\\0&1&0\\0&0&1\end{pmatrix},\quad
M_2=\begin{pmatrix}1&0&0\\1&1&1\\0&0&1\end{pmatrix},\quad
M_3=\begin{pmatrix}1&0&0\\0&1&0\\1&1&1\end{pmatrix},$$
respectively, and let~$f_1$, $f_2$, $f_3$ be the respective projective maps from~$\Delta$ to itself:
$$\begin{aligned}
f_1(\lambda_1,\lambda_2,\lambda_3)&=\Bigl(\frac1{1+\lambda_2+\lambda_3},\frac{\lambda_2}{1+\lambda_2+\lambda_3},\frac{\lambda_3}{1+\lambda_2+\lambda_3}\Bigr),\\
f_2(\lambda_1,\lambda_2,\lambda_3)&=\Bigl(\frac{\lambda_1}{1+\lambda_1+\lambda_3},\frac1{1+\lambda_1+\lambda_3},\frac{\lambda_3}{1+\lambda_1+\lambda_3}\Bigr),\\
f_3(\lambda_1,\lambda_2,\lambda_3)&=\Bigl(\frac{\lambda_1}{1+\lambda_1+\lambda_2},\frac{\lambda_2}{1+\lambda_1+\lambda_2},\frac1{1+\lambda_1+\lambda_2}\Bigr).
\end{aligned}$$

The images~$f_1(\Delta)$, $f_2(\Delta)$, $f_3(\Delta)$ are the three smaller triangles cut off from~$\Delta$ by the midlines (see Fig.~\ref{midlines-fig}).
\begin{figure}[ht]
\includegraphics[scale=.7]{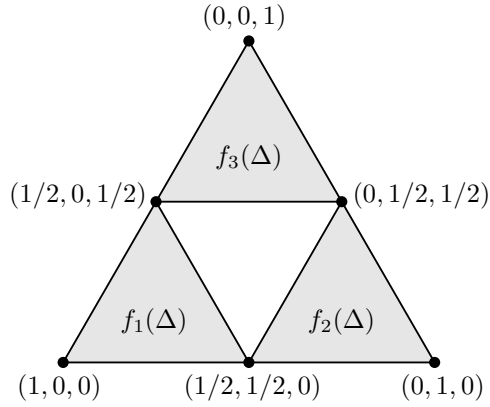}\put(-165,-5){$(1,0,0)$}\put(-102,-5){$(1/2,1/2,0)$}\put(-20,-5){$(0,1,0)$}
\put(-168,68){$(1/2,0,1/2)$}\put(-38,68){$(0,1/2,1/2)$}\put(-95,135){$(0,0,1)$}
\put(-126,20){$f_1(\Delta)$}\put(-55,20){$f_2(\Delta)$}\put(-90,82){$f_3(\Delta)$}
\caption{The images of~$\Delta$ under~$f_i$, $i=1,2,3$}\label{midlines-fig}
\end{figure}

For a word~$\slbf i=i_1i_2\ldots i_k$ in the alphabet~$\{1,2,3\}$
we denote by~$f_{\slbf i}$
the composition~$f_{i_1}\circ\ldots\circ f_{i_k}$. In particular, $f_\varnothing=\mathrm{id}$.

\begin{definition}
\emph{The Rauzy gasket} is the maximal subset~$\rg$ of~$\Delta$ satisfying the following:
$$\rg=f_1(\rg)\cup f_2(\rg)\cup f_3(\rg).$$
\end{definition}

In other words, $\lambda\in\rg$ if and only if there is an infinite word~$i_1i_2i_3\ldots\in\{1,2,3\}^{\mathbb N}$
such that, for any~$k\in\mathbb N$, we have~$\lambda\in f_{i_1\ldots i_k}(\Delta)$.
Such an~$\slbf i$ will be called \emph{a directing word for~$\lambda$}.

Denote by~$\Delta_0$ the subset of~$\Delta$ consisting of all points satisfying the triangle inequalities:
$$\Delta_0=\{(\lambda_1,\lambda_2,\lambda_3)\in\Delta:\lambda_1,\lambda_2,\lambda_3\leqslant1/2\}.$$
One can see that the union~$f_1(\Delta)\cup f_2(\Delta)\cup f_3(\Delta)$ is
the closure of~$\Delta\setminus\Delta_0$.

The Rauzy gasket can also be described as the subset of~$\Delta$ obtained by removing
the interior of all the triangles of the form~$f_{\slbf i}(\Delta_0)$, where~$\slbf i$
runs over the set of all finite words in the alphabet~$\{1,2,3\}$. Topologically,
the Rauzy gasket is equivalent to the Sierpinski triangle, but the geometry
is quite different, see Figure~\ref{RaGu}.
\begin{figure}[ht]
\includegraphics[scale=1.1]{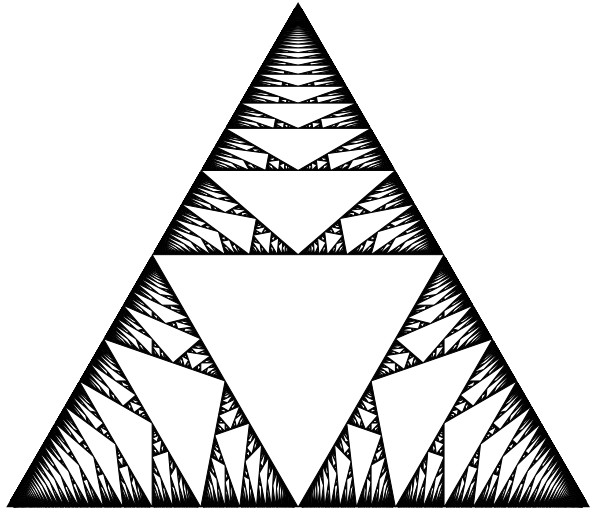}
\caption{The Rauzy Gasket}
\label{RaGu}
\end{figure}

We will be particularly interested in totally irrational points of~$\rg$, that is, points~$(\lambda_1,\lambda_2,\lambda_3)\in\rg$ such that~$\lambda_1$,
$\lambda_2$, $\lambda_3$ are linearly independent over~$\mathbb Q$. The subset of~$\rg$ consisting
of such points will be denoted by~$\rg_{\mathrm{irr}}$.

\begin{proposition}\label{regular-rauzy-prop}
\emph{(i)}
We have
\begin{equation}\label{regular-rauzy-eq}
\rg\setminus\rg_{\mathrm{irr}}=\bigcup_{\slbf i}f_{\slbf i}(\partial\Delta),\end{equation}
where the union is taken over all finite words~$\slbf i$ in the alphabet~$\{1,2,3\}$.

\emph{(ii)}
The correspondence~$\lambda\mapsto(\text{a directing word for }\lambda)$ restricted
to~$\rg_{\mathrm{irr}}$ is a homeomorphism
from~$\rg_{\mathrm{irr}}$ to the set~$\mathscr X$ of all infinite words in the alphabet~$\{1,2,3\}$
in which every letter~$i\in\{1,2,3\}$ appears infinitely often
endowed with the topology induced from the product topology on~$\{1,2,3\}^{\mathbb N}$.
\end{proposition}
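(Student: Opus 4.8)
The plan is to reduce both parts to a single dichotomy for the directing word of a point $\lambda\in\rg$: either all three letters occur infinitely often, or they do not. I would first record the easy algebraic direction. Each $M_i$ lies in $SL_3(\mathbb Z)$ with nonnegative entries, so every $f_{\slbf i}$ is induced by the integer matrix $M_{\slbf i}=M_{i_1}\cdots M_{i_k}$. If $\mu=f_{\slbf i}(\nu)$ with $\nu\in\partial\Delta$, then some coordinate of $M_{\slbf i}^{-1}\mu$ vanishes, which is a nontrivial integer linear relation among the coordinates of $\mu$ (nontrivial because the corresponding row of $M_{\slbf i}^{-1}$ is a nonzero integer vector); hence $\mu\notin\rg_{\mathrm{irr}}$. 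Combined with the invariance $f_{\slbf i}(\rg)\subseteq\rg$ and the elementary fact $\partial\Delta\subseteq\rg$ (every boundary point has a directing word given by the slow continued fraction in the two surviving letters), this yields $\bigcup_{\slbf i}f_{\slbf i}(\partial\Delta)\subseteq\rg\setminus\rg_{\mathrm{irr}}$, one inclusion of~(i).

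The heart of the argument is the converse, that a point whose directing word uses all three letters infinitely often is totally irrational. Suppose $a\cdot\lambda=0$ for some $a\in\mathbb Z^3\setminus\{0\}$, fix a directing word $\slbf i$, and set $\lambda^{(k)}=f_{i_1\cdots i_k}^{-1}(\lambda)\in\Delta$ and $b^{(k)}=M_{i_1\cdots i_k}^T a\in\mathbb Z^3$, so that $b^{(k)}\cdot\lambda^{(k)}=0$ and $b^{(k+1)}=M_{i_{k+1}}^T b^{(k)}$. If all letters recur then no $\lambda^{(k)}$ lies on $\partial\Delta$ (an edge is preserved by its two maps, so the word would be eventually two-lettered), hence each $\lambda^{(k)}$ is interior and each $b^{(k)}$ has strictly mixed signs. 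The map $M_m^T$ fixes the $m$-th coordinate and adds it to the other two; using $\lambda^{(k)}_{i_{k+1}}\geqslant1/2$ one checks $|b^{(k)}_{i_{k+1}}|\leqslant\max_{j\ne i_{k+1}}|b^{(k)}_j|$, and a short case analysis then shows that, as long as $b^{(k+1)}$ stays mixed-signed, $\|b^{(k+1)}\|_1\leqslant\|b^{(k)}\|_1$. Being a non-increasing sequence of integers bounded below by $2$, $\|b^{(k)}\|_1$ is eventually constant; but in the constant regime every mixed-signed $b^{(k)}$ has a unique ``lone-sign'' coordinate, and applying the letter equal to that coordinate strictly decreases $\|b^{(k)}\|_1$. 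Since all letters recur, such a step occurs, a contradiction. \textbf{This monotonicity-and-recurrence argument is the main obstacle}: the norm is monotone only along the dynamically admissible (mixed-signed) orbit, and extracting a strict decrease requires tracking how the lone-sign coordinate is preserved under the other two letters.

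For the remaining inclusion of~(i), I would prove that a directing word eventually confined to a two-letter alphabet $\{p,q\}$ forces $\lambda$ onto the edge $\{\lambda_r=0\}$, $r\notin\{p,q\}$: after relabelling, $M_p,M_q$ share the last row $(0,0,1)$, so $(M_{\slbf j}\nu)_r=\nu_r$, while the column sums of $M_{\slbf j}$ tend to infinity; hence the $r$-th coordinate on $f_{\slbf j}(\Delta)$ tends to $0$ uniformly and the nested intersection lies in $\{\lambda_r=0\}$. Applying this to $\lambda^{(N)}$ after the last occurrence of the third letter gives $\lambda\in f_{i_1\cdots i_N}(\partial\Delta)$, so $\rg\setminus\rg_{\mathrm{irr}}\subseteq\bigcup_{\slbf i}f_{\slbf i}(\partial\Delta)$; together with the contrapositive of the previous paragraph this completes~(i).

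For~(ii), the level-$k$ triangles $f_{\slbf j}(\Delta)$ with $|\slbf j|=k$ have pairwise disjoint interiors (at level one $f_i(\Delta)=\{\lambda_i\geqslant1/2\}$, and disjointness propagates through the injective maps), so a point of $\rg_{\mathrm{irr}}$, lying in none of the rational sets $f_{\slbf i}(\partial\Delta)$, sits in the interior of a unique triangle at each level and hence has a \emph{unique} directing word; by the first two paragraphs this word lies in $\mathscr X$, and conversely every word of $\mathscr X$ arises. Injectivity and both continuities reduce to the contraction statement that $\operatorname{diam}f_{i_1\cdots i_k}(\Delta)\to0$ whenever all letters recur, which makes $\bigcap_k f_{i_1\cdots i_k}(\Delta)$ a single point depending continuously on the word; I expect to obtain it from the projective (Hilbert-metric) contraction of the $f_i$ together with the recurrence of all three letters, as in the cited treatments of the gasket. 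The correspondence $\lambda\mapsto(\text{directing word})$ is then a continuous bijection with continuous inverse, i.e.\ a homeomorphism $\rg_{\mathrm{irr}}\to\mathscr X$.
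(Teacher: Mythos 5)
Your proposal follows the same skeleton as the paper's proof: part (i) via the same three inclusions, with the crucial one obtained by pushing the integer relation along the dual orbit $b^{(k+1)}=M_{i_{k+1}}^{T}b^{(k)}$ and running a monotone quantity, and part (ii) via Hilbert-metric contraction. But both crucial steps have gaps. In (i) your monotone quantity is $\|\cdot\|_1$ instead of the paper's quadratic form, and while the non-increase along mixed-signed orbits is correct, your constant-regime claim is false as stated: a mixed-signed integer vector need not have a unique lone-sign coordinate. For instance $b^{(k)}=(1,-2,0)$ is mixed-signed, is orthogonal to interior points, and can occur in the constant regime (letter $3$ fixes it; letter $1$ sends it to $(1,-1,1)$ with the same norm). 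Such vectors need a separate argument: from $(1,-2,0)$, letter $2$ destroys mixed signs at once, and letter $1$ does so one step later, since from $(1,-1,1)$ \emph{no} letter preserves mixed signs; so the tail must be $3^{\infty}$, contradicting recurrence. For vectors with all coordinates nonzero your sketch can be completed --- the lone-sign coordinate is invariant under the other two letters, and when the lone-sign letter is finally applied, your bound $|b^{(k)}_{i_{k+1}}|\leqslant\max_{j\ne i_{k+1}}|b^{(k)}_j|$ does force a strict drop --- but none of this is carried out, and you yourself flag it as the main obstacle. The paper's form $q(m)=m_1^2+m_2^2+m_3^2-2m_1m_2-2m_1m_3-2m_2m_3$ eliminates this bookkeeping entirely: $q(\widehat f_i(m))-q(m)=-4m_i^2$ holds unconditionally, $q(n^k)\geqslant0$ requires only $\lambda^k\in\Delta$ (no interiority, no mixedness), and stabilization of the integer sequence $q(n^k)$ forces $n^k$ itself to stabilize, after which any letter $j$ with $n^k_j\ne0$ can never occur again. (A minor slip in your third paragraph: for a tail $1^n$ the first column sum of $M_{\slbf j}$ stays equal to $1$, so not all column sums grow; your conclusion survives because only the $r$-th column sum is needed, and that one does tend to infinity.)

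The gap in (ii) is more substantial, because the statement you defer --- that $\operatorname{diam}f_{i_1\cdots i_k}(\Delta)\to0$ for every word of $\mathscr X$ --- is the actual content of injectivity, of uniqueness in surjectivity, and of continuity of $\chi^{-1}$. ``Hilbert contraction of the $f_i$ plus recurrence of all three letters'' does not yield it: each $M_i$ has zero entries, so each $f_i$ is merely $1$-Lipschitz for the Hilbert metric, and one cannot wait for a fixed positive block to recur, since a word of $\mathscr X$ need not be recurrent. Concretely, in the word $\prod_{k\geqslant1}2^k\,1^k\,3\,1^k\in\mathscr X$ every factor occurring infinitely often omits the letter $2$ or the letter $3$, hence its matrix product has a row equal to a standard basis vector and is never strictly positive: no fixed contracting block recurs. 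The paper's proof supplies exactly the missing uniformity: for every $k\in\mathbb N$ and every $\{j_1,j_2,j_3\}=\{1,2,3\}$, the matrix $M_{j_1}M_{j_2}^kM_{j_3}A^{-1}$ is nonnegative, where $A$ is a fixed strictly positive matrix; hence all the blocks $j_1j_2^kj_3$ contract the Hilbert metric by a factor bounded away from $1$ \emph{uniformly in}~$k$, and such blocks occur infinitely often in every word of $\mathscr X$. Without this idea, or some substitute producing uniform contraction factors along the word, part (ii) of your proposal remains unproved.
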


\begin{remark}
Part~(i) of this proposition follows from the more general statement \cite[Lemma 4.2]{BSTh} proof of which relies
on a quite advanced symbolic dynamics. We provide here a straightforward proof.
To prove part~(ii), we use~(i) and follow Arnoux--Rauzy~\cite{AR}.
\end{remark} 

\begin{proof}[Proof of Proposition~\ref{regular-rauzy-prop}]
(i)
Denote the union in the right hand side of~\eqref{regular-rauzy-eq} by~$\boundaries$.
Clearly, $f_{\slbf i}(\partial\Delta)$ is disjoint from~$\rg_{\mathrm{irr}}$ for any~$\slbf i$.
We also have~$\partial\Delta\subset f_1(\partial\Delta)\cup f_2(\partial\Delta)\cup f_3(\partial\Delta)$,
hence~$\partial\Delta\subset\rg$ and~$f_{\slbf i}(\partial\Delta)\subset\rg$ for any~$\slbf i$. This implies
\begin{equation}\label{incl1}
\boundaries\subset\rg\setminus\rg_{\mathrm{irr}}.\end{equation}

\def\badset{\mathscr B}
Denote by $\badset$ the set of~$\lambda\in\rg$ such that not all the elements of~$\{1,2,3\}$
appear infinitely often in some directing word for~$\lambda$.
We claim that
\begin{equation}\label{incl2}
\badset\subset\boundaries.\end{equation}

To see this, consider the inverse maps of~$f_i$, $i=1,2,3$:
$$\begin{aligned}
f_1^{-1}(\lambda_1,\lambda_2,\lambda_3)&=\bigl((\lambda_1-\lambda_2-\lambda_3)/\lambda_1,
\lambda_2/\lambda_1,\lambda_3/\lambda_1\bigr),\\
f_2^{-1}(\lambda_1,\lambda_2,\lambda_3)&=\bigl(\lambda_1/\lambda_2,(\lambda_2-\lambda_1-\lambda_3)/\lambda_2,\lambda_3/\lambda_2\bigr),\\
f_3^{-1}(\lambda_1,\lambda_2,\lambda_3)&=\bigl(\lambda_1/\lambda_3,\lambda_2/\lambda_3,(\lambda_3-\lambda_1-\lambda_2)/\lambda_3\bigr).
\end{aligned}$$

A directing word~$i_1i_2\ldots$ for~$\lambda$ is defined by demanding that
the maps~$f_{i_1}^{-1}$, $f_{i_2}^{-1}$, $\ldots$ can be successively applied to~$\lambda$,
and, at every step, the result remains in~$\Delta$.

Suppose that~$\lambda_1>0$. Then, for~$j\in\{2,3\}$, the first coordinate of~$f_j^{-1}(\lambda)$
is not smaller than~$\lambda_1/(1-\lambda_1)>\lambda_1$. Note that any sequence $(x_0,x_1,x_2,\ldots)$
satisfying~$x_{i+1}=x_i/(1-x_i)$ for all~$i$, tends to infinity, when~$i$ goes to infinity,
if~$x_0>0$. Therefore, if~$\lambda_1>0$,
one cannot successively apply maps in~$\{f_2^{-1},f_3^{-1}\}$ to~$\lambda$ infinitely long without
making~$\lambda$
escape from~$\Delta$. Due to symmetry the same holds after any permutation of the indices~$1,2,3$.

Thus, either one of the coordinates of~$f_{i_1i_2\ldots i_k}^{-1}(\lambda)$, $k=1,2,3,\ldots$,
eventually vanishes, which means that~$\lambda_1,\lambda_2,\lambda_3$ are linearly dependent
over~$\mathbb Q$, or all elements of~$\{1,2,3\}$ appear in~$i_1i_2\ldots$
infinitely often. This implies~\eqref{incl2}.

Now we will show that
\begin{equation}\label{incl3}
\rg\setminus\rg_{\mathrm{irr}}\subset\badset,\end{equation}
which, together with~\eqref{incl1} and~\eqref{incl2}, implies~\eqref{regular-rauzy-eq}.

Let~$\lambda\in\rg$ and~$n=(n_1,n_2,n_3)\in\mathbb Z^3\setminus\{0\}$ be such that~$\langle n,\lambda\rangle=0$,
where~$\langle n,\lambda\rangle$ stands for~$n_1\lambda_1+n_2\lambda_2+n_3\lambda_3$,
and let~$i_1i_2\ldots$ be a directing word for~$\lambda$. Denote by~$\widehat f_i$, $i=1,2,3$,
the following linear isomorphisms of~$\mathbb Z^3$:
$$\begin{aligned}
\widehat f_1(m_1,m_2,m_3)&=(m_1,m_2+m_1,m_3+m_1),\\
\widehat f_2(m_1,m_2,m_3)&=(m_1+m_2,m_2,m_3+m_2),\\
\widehat f_3(m_1,m_2,m_3)&=(m_1+m_3,m_2+m_3,m_3).
\end{aligned}$$
For any~$k=1,2,3,\ldots$, we have
$$\bigl\langle(\widehat f_{i_k}\circ\ldots\circ\widehat f_{i_2}\circ\widehat f_{i_1})(n),f_{i_1i_2\ldots i_k}^{-1}(\lambda)\bigr\rangle=0.$$
Denote $n^k=(n^k_1,n^k_2,n^k_3)=(\widehat f_{i_k}\circ\ldots\circ\widehat f_{i_2}\circ\widehat f_{i_1})(n)$
and~$\lambda^k=f_{i_1i_2\ldots i_k}^{-1}(\lambda)$.

Denote also by~$q$ the following quadratic form on~$\mathbb Z^3$:
$$q(m)={m_1}^2+{m_2}^2+{m_3}^2-2m_1m_2-2m_1m_3-2m_2m_3,\quad m=(m_1,m_2,m_3).$$
The value of this form at~$m$ can be negative only if all coordinates
of~$m$ are strictly positive or all are strictly negative.
We also have
\begin{equation}\label{quadratic-form-properties}
q\bigl(\widehat f_i(m)\bigr)-q(m)=-4{m_i}^2\leqslant0\quad\text{and}\quad q\bigl(\widehat f_i(m)\bigr)=q(m)\Leftrightarrow\widehat f_i(m)=m.
\end{equation}

For no~$k$, all three numbers~$n^k_1,n^k_2,n^k_3$ can be
distinct from zero and be of the same sign, since~$\langle n^k,\lambda_k\rangle=0$ and~$\lambda^k\in\Delta$.
Therefore, we have~$q(n^k)\geqslant0$ for all~$k$. In view of~\eqref{quadratic-form-properties} this means
that the sequence~$(n,n^1,n^2,n^3,\ldots)$ eventually stabilizes: for some~$k$, we have~$n^k=n^{k+1}=n^{k+2}=\ldots$.
The maps~$\widehat f_i$ are invertible, so~$n^k\ne0$. If~$n^k_j\ne0$, then~$j$ does not appear
in the infinite word~$i_{k+1}i_{k+2}i_{k+3}\ldots$.

We have shown that~$\lambda\in\rg\setminus\rg_{\mathrm{irr}}$ implies~$\lambda\in\mathscr B$
and thus established that~$\rg\setminus\rg_{\mathrm{irr}}=\mathscr B=\mathscr C$.

(ii)
We have also shown that a directing word for any~$\lambda\in\rg_{\mathrm{irr}}$ is unique.
We denote it by~$\chi(\lambda)$.

Thus, we have a well defined map~$\chi:\rg_{\mathrm{irr}}\rightarrow\mathscr X$. This map is continuous,
since, for any finite word~$\slbf i=i_1i_2\ldots i_k$, the preimage of~$\{\slbf a\in\mathscr X:\slbf i\text{ is a prefix of }
\slbf a\}$ under~$\chi$ is~$f_{\slbf i}(\Delta\setminus\partial\Delta)\cap\rg_{\mathrm{irr}}$,
which is an open subset of~$\rg_{\mathrm{irr}}$.

It remains to show that any infinite word~$\slbf i=i_1i_2i_3\ldots\in\mathscr X$
is a directing word for a unique~$\lambda\in\rg$, and that~$\chi^{-1}$ is continuous.
The existence of~$\lambda\in\chi^{-1}(\slbf i)$ follows from the inclusions
$$\Delta\supset f_{i_1}(\Delta)\supset f_{i_1i_2}(\Delta)\supset\ldots\supset f_{i_1i_2\ldots i_k}(\Delta)\supset\ldots$$
and the fact that all these subsets are closed, since one can
take for~$\lambda$ any of their common intersection points.

The uniqueness of~$\lambda$ can be seen as follows. It is an easy check that,
for any~$k\in\mathbb N$ and~$\{j_1,j_2,j_3\}=\{1,2,3\}$
the matrix~$M_{j_1}M_{j_2}^kM_{j_3}A^{-1}$, where
$$A=\begin{pmatrix}3&1&1\\1&3&1\\1&1&3\end{pmatrix},$$
has only non-negative entries. Therefore, the restriction of~$f_{j_1j_2^kj_3}$ to the interior of~$\Delta$
is a contraction map with respect to the Hilbert projective metric, and
the contraction factor is not larger than that of the projective map defined by the matrix~$A$.
The contraction factor of the latter is strictly smaller than~$1$, since all entries of~$A$ are strictly positive.
It remains to notice that factors of the form~$j_1j_2^kj_3$ with~$k\in\mathbb N$ and~$\{j_1,j_2,j_3\}=\{1,2,3\}$
appear in~$\slbf i$ infinitely often. Hence, the intersection
$$\bigcap_{k=1}^\infty f_{i_1i_2\ldots i_k}(\Delta)$$
is a single point.

The continuity of~$\chi^{-1}$ follows from the obvious fact that, the closer~$\lambda'\in\rg_{\mathrm{irr}}$
to $\lambda$ the larger is the maximal common prefix of~$\chi(\lambda)$ and~$\chi(\lambda')$.
\end{proof}

It is shown independently in~\cite{L}, \cite{DD}, and~\cite{AS} that the Lebesgue measure of~$\rg$ is zero (see also \cite{MeNo} for a similar statement
about the fully subtractive algorithm). The following stronger result is established in~\cite{AHS1} (upper bound) and \cite{GRM} (lower bound):

\begin{theorem}
The Hausdorff dimension of~$\rg$ is strictly between 1.19 and 2.
\end{theorem}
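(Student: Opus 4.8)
The plan is to realize~$\rg$ as the limit set of the iterated function system~$\{f_1,f_2,f_3\}$ and to locate~$\dim_H\rg$ by thermodynamic formalism, proving the two inequalities separately. The fundamental difficulty, common to both bounds, is that each generator~$f_i$ has a \emph{parabolic} fixed point at the vertex of~$\Delta$ it fixes: since~$\det M_i=1$, the Jacobian of~$f_i$ there equals~$1$, so~$f_i$ is asymptotically area preserving and the cylinders~$f_{i^n}(\Delta)$ shrink only polynomially in~$n$. This is precisely why no single~$f_i$ contracts the Hilbert metric and why one must pass, as in the proof of Proposition~\ref{regular-rauzy-prop}, to blocks~$f_{j_1 j_2^k j_3}$ to recover uniform contraction. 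I would therefore first \emph{induce}: replace the three generators by the countable family of accelerated branches obtained by grouping maximal runs of equal letters, producing an IFS with countably many uniformly hyperbolic branches (a continued-fraction-type system) whose limit set still carries~$\rg_{\mathrm{irr}}$ through the coding~$\chi$ of Proposition~\ref{regular-rauzy-prop}(ii), with bounded distortion.

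For the upper bound I would estimate covering sums directly. For~$|\slbf i|=n$ the triangles~$f_{\slbf i}(\Delta)$ cover~$\rg$, and since the projective maps are \emph{non-conformal} the efficient way to cover the thin triangle~$f_{\slbf i}(\Delta)$ is by balls of radius~$\sigma_2(Df_{\slbf i})$, so that its~$s$-dimensional cost is comparable to the singular-value function~$\phi^s(Df_{\slbf i})=\sigma_1(Df_{\slbf i})\,\sigma_2(Df_{\slbf i})^{\,s-1}$ for~$1\le s\le 2$. Hence~$\dim_H\rg$ is bounded above by the affinity dimension, the root~$s$ of the pressure equation~$P(s)=0$ for the subadditive potential~$\log\phi^s$ over the cocycle generated by~$M_1,M_2,M_3$. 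At~$s=2$ one has~$\phi^2(Df_{\slbf i})=|\det Df_{\slbf i}|$, whose sum over~$|\slbf i|=n$ is the Lebesgue measure of~$\bigcup f_{\slbf i}(\Delta)$ and therefore tends to~$0$. The delicate point is that this alone does \emph{not} give strict inequality: parabolic systems are exactly those where a Lebesgue-null repeller can still have full dimension~$2$. The content of the argument is thus to show, on the induced uniformly hyperbolic system, that the pressure is strictly negative at~$s=2$ with a definite gap — equivalently that the two nontrivial Lyapunov exponents of the renormalization cocycle are genuinely distinct, forcing positive codimension — and the explicit conclusion that some~$s<2$ works comes from effective (computer-assisted) estimation of the leading eigenvalue of a finite truncation of the transfer operator.

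For the lower bound I would discard the parabolic vertices entirely and restrict to a \emph{finite} sub-IFS of accelerated branches~$f_{\slbf w}$ whose limit set is a self-conformal (in the induced, distortion-controlled sense) Cantor set~$K\subset\rg_{\mathrm{irr}}$ bounded away from~$\partial\Delta$. On~$K$ one places the Gibbs measure~$\mu$ maximizing~$h_\mu/\lambda_\mu$ and applies the mass distribution principle: controlled distortion yields~$\mu(B(x,r))\le C\,r^{s}$ with~$s=h_\mu/\lambda_\mu$, whence~$\dim_H\rg\ge\dim_H K\ge s$, the value~$1.19$ following from a rigorous numerical lower bound on the entropy together with a matching upper bound on the Lyapunov exponent of the chosen subsystem. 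The main obstacle throughout is the interplay of parabolicity at the vertices with the non-conformality of two-dimensional projective maps: both force the use of the singular-value function rather than Bowen's scalar equation, require inducing before any thermodynamic machinery applies, and are what prevent the strict inequalities from being read off by soft arguments, so that the explicit window~$(1.19,2)$ ultimately rests on effective estimates of a truncated transfer operator.
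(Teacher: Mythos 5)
Before anything else, note that the paper you were asked to match does not actually prove this theorem: it is imported verbatim from the literature, with the upper bound attributed to~\cite{AHS1} and the lower bound to~\cite{GRM}. So the only meaningful review is of your argument on its own terms, and there it has a genuine gap: the entire content of the theorem is the two strict inequalities, and both are precisely the steps your write-up delegates to computations that are never performed. For the upper bound you say the conclusion ``comes from effective (computer-assisted) estimation of the leading eigenvalue of a finite truncation of the transfer operator,'' and for the lower bound that $1.19$ ``follows from a rigorous numerical lower bound on the entropy together with a matching upper bound on the Lyapunov exponent of the chosen subsystem'' --- but no induced alphabet is fixed, no finite family of words is exhibited, no matrix products are estimated, and no numerical certificate is produced. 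What you do argue (parabolicity of each $f_i$ at its fixed vertex, polynomial shrinking of $f_{i^n}(\Delta)$, vanishing of the level-$n$ area sums, the necessity of inducing, the inadequacy of Lebesgue-measure-zero for a dimension gap) is the set-up common to every approach, not the theorem.

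Two specific points inside the plan are also unjustified. First, your claim that strict negativity of the subadditive pressure at $s=2$ is ``equivalent'' to the two nontrivial Lyapunov exponents of the cocycle being distinct is asserted without proof and is not correct as stated: simplicity of the Lyapunov spectrum is an almost-everywhere statement with respect to one invariant measure, whereas $P(2)<0$ requires a gap that is uniform enough over the whole (countable, after inducing) alphabet; converting ``a.e.''\ into ``uniform'' is exactly the hard step, and it is carried out in~\cite{AHS1} by a theoretical argument with no numerics at all --- consistently, \cite{AHS1} produces no explicit exponent below $2$, the first explicit upper bound ($1.825$) appearing only in~\cite{Fo20}. Second, for the lower bound, the mass-distribution estimate $\mu(B(x,r))\leqslant C\,r^{h_\mu/\lambda_\mu}$ for a non-conformal projective IFS is only valid when $\lambda_\mu$ is the top (slowest-contraction) exponent and after a separation and bounded-distortion argument on the chosen subsystem; more importantly, one must actually exhibit a finite subsystem whose entropy-to-exponent ratio exceeds $1.19$, which is where all of the work in~\cite{GRM} lies. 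As written, neither inequality of the theorem is established.
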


Recently, Ch.\,Fougeron announced a
proof that the Hausdorff dimension of~$\rg$ is bounded from above by $1.825$ \cite{Fo20}.
The exact value of the Hausdorff dimension of~$\rg$ is unknown. A numerical
estimation for the dimension is~$\approx1.7$~\cite{DD}.

We now define the measure on~$\rg$ that has been introduced in~\cite{AHS2}
using the thermodynamic formalism for Markov shifts~\cite{S0,S2}, as well as the
classical suspension construction suggested in~\cite{V} and some important estimations from \cite{AGY}.

Denote by~$F$ the following three-to-one map on~$\rg_{\mathrm{irr}}$:
\begin{equation}\label{mapF-eq}
F(\lambda)=(f_i^{-1}(\lambda)),\quad\text{if }\lambda\in f_i(\rg_{\mathrm{irr}}).
\end{equation}
Define also \emph{the roof function}~$r:\rg_{\mathrm{irr}}\rightarrow\mathbb R$ by
$$r(\lambda)=-\log\max_i\lambda_i,$$
and let~$X$ be the following space:
$$X=(\rg_{\mathrm{irr}}\times\mathbb R)/\bigl\{(\lambda,a+r(\lambda))\sim(F(\lambda),a)\bigr\}.$$

The next proposition follows from~\cite{S0} and \cite{S2} and was established in \cite{AHS2}.

\begin{proposition}\label{max-entropy-prop}
There is a unique probability measure~$\mu_X$ on~$X$ that maximises the Kolmogorov entropy of the semiflow~$\phi$ on~$X$
defined by
$$\phi_t(\lambda,a)=(\lambda,a+t),\quad t\geqslant0.$$
\end{proposition}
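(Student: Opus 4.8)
The plan is to reduce the statement to the thermodynamic formalism for topologically mixing Markov shifts, following Sarig \cite{S0,S2}, and to use Abramov's formula to pass between the base map and the suspension semiflow. Via the homeomorphism $\chi$ of Proposition~\ref{regular-rauzy-prop}(ii), I would first transport the dynamical system $(\rg_{\mathrm{irr}},F)$ to the shift map $\sigma$ on $\mathscr X\subset\{1,2,3\}^{\mathbb N}$. Since the complement of $\mathscr X$ in the full shift $\{1,2,3\}^{\mathbb N}$ consists of sequences in which some letter occurs only finitely often, it supports only shift-invariant measures of entropy strictly below $\log 3$ and is null for every fully supported Gibbs measure; it is therefore irrelevant to the maximal-entropy problem, and one may work with the full one-sided shift $\Sigma$ on three letters. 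This is a topologically mixing Markov shift with the big images and preimages (BIP) property, so the machinery of \cite{S0,S2} applies. Under $\chi$ the roof function $r$ becomes a function $\tilde r$ on $\Sigma$, and the contraction estimate in the Hilbert projective metric established in the proof of Proposition~\ref{regular-rauzy-prop}(ii) shows that two points of $\rg_{\mathrm{irr}}$ whose directing words share a long prefix are projectively close; hence the variations of $\tilde r$ decay geometrically and $\tilde r$ is locally H\"older.

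Next, by Abramov's formula a $\phi$-invariant probability measure on $X$ corresponds bijectively to a $\sigma$-invariant probability measure $\mu$ on $\Sigma$ with $\int\tilde r\,d\mu<\infty$, the Kolmogorov entropy of the semiflow being $h_\mu(\sigma)/\int\tilde r\,d\mu$. Maximising the entropy of $\phi$ is thus the variational problem of maximising this ratio. I would solve it through the pressure function $P(s):=P_{\mathrm{Gur}}(-s\tilde r)$. By \cite{S0,S2} together with the local H\"older property, $P$ is finite, convex and strictly decreasing on the relevant half-line, with $P(0)=h_{\mathrm{top}}(\sigma)=\log 3>0$, and for each admissible $s$ the potential $-s\tilde r$ carries a \emph{unique} equilibrium state, realised as the invariant Gibbs measure of the Ruelle--Perron--Frobenius operator. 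A value $s^\ast$ with $P(s^\ast)=0$ is precisely the topological entropy of $\phi$, and the equilibrium state $\nu_{s^\ast}$ of $-s^\ast\tilde r$ pushes forward, through the suspension construction, to a $\phi$-invariant probability measure $\mu_X$ that maximises the flow entropy. Uniqueness of $\mu_X$ then follows from uniqueness of the equilibrium state together with the injectivity of the Abramov correspondence.

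The main obstacle is analytic control of $P(s)$ caused by the degeneracy of the roof function. On $\rg_{\mathrm{irr}}$ one has $r>0$ and $r\leqslant\log 3$, but $r(\lambda)=-\log\max_i\lambda_i\to0$ as $\lambda$ approaches a vertex of $\Delta$; equivalently $\tilde r$ vanishes exactly at the three fixed points $\bar1,\bar2,\bar3$ of $\Sigma$, which lie outside $\mathscr X$. Hence the roof is not bounded below by a positive constant, and the cusp-like behaviour near the vertices must be tamed in order to guarantee both that the equation $P(s^\ast)=0$ has a unique solution with $s^\ast>0$ and that the associated equilibrium state is positive recurrent with $\int\tilde r\,d\nu_{s^\ast}<\infty$, so that $\mu_X$ is a genuine probability measure rather than an infinite or null one, and that no phase transition occurs at $s^\ast$. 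This is exactly where the fine distortion estimates of \cite{AGY} and the normalisation underlying Veech's suspension \cite{V} enter: they furnish the control on the return times and on the transfer operator near the cusps needed to verify the hypotheses of Sarig's existence-and-uniqueness theorems, and thereby to complete the argument.
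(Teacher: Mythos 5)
First, note that the paper itself offers no proof of Proposition~\ref{max-entropy-prop}: it is quoted from \cite{AHS2}, where it is derived from Sarig's thermodynamic formalism \cite{S0,S2}. Your overall skeleton --- code the system symbolically, pass to the suspension via Abramov's formula, solve the pressure equation $P(s^\ast)=0$, and invoke uniqueness of equilibrium states --- is indeed the strategy of that reference. But your execution has a genuine gap at the crucial regularity step. You propose to work on the \emph{full} $3$-letter shift and claim that the Hilbert-metric contraction from the proof of Proposition~\ref{regular-rauzy-prop}(ii) makes $\tilde r$ locally H\"older there. That contraction estimate applies only to words containing a block $j_1j_2^kj_3$ in which all three letters occur; it gives nothing along constant blocks. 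Indeed, since $(M_1-\mathbbm1_3)^2=0$ one has $M_1^n=\mathbbm1_3+n(M_1-\mathbbm1_3)$, so the cylinder $f_{1^n}(\Delta)$ is a triangle of diameter $\asymp 1/n$ shrinking to the \emph{parabolic} fixed point $(1,0,0)$, and the oscillation of $\tilde r$ over the cylinder of $1^n$ is $\asymp 1/n$. The variations of $\tilde r$ thus decay only polynomially and are not even summable, so $-s\tilde r$ is neither locally H\"older nor of summable variations on the full shift, and Sarig's Ruelle--Perron--Frobenius and uniqueness theorems, on which your argument rests, do not apply there. (The BIP property of the full $3$-shift that you stress is trivial --- it is a compact subshift of finite type --- and was never the difficulty; note also that $\tilde r=r\circ\chi^{-1}$ is not even defined off $\mathscr X$ without a separate extension argument.)

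The missing idea --- and the heart of the proof in \cite{AHS2} --- is an acceleration (inducing) step of Farey-to-Gauss type: one groups each maximal constant block $i^k$ into a single symbol, obtaining a topologically mixing \emph{countable}-alphabet Markov shift with states $(i,k)$, $i\in\{1,2,3\}$, $k\geqslant1$, and transitions $(i,k)\to(j,l)$ allowed exactly when $j\ne i$. After this acceleration the branch maps do contract uniformly in the Hilbert projective metric --- this is precisely what the positivity of $M_{j_1}M_{j_2}^kM_{j_3}A^{-1}$ in the proof of Proposition~\ref{regular-rauzy-prop} delivers --- so the induced roof function (a Birkhoff sum of $r$ over a block) is locally H\"older, and the BIP property becomes a meaningful, verifiable statement. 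It is on this countable shift that your pressure argument should be run; the distortion and tail estimates in the spirit of \cite{AGY} are then what guarantee finiteness of the pressure, positive recurrence of the equilibrium state, and $\int\tilde r\,d\nu_{s^\ast}<\infty$ (so that $\mu_X$ is a genuine finite measure and no phase transition occurs at $s^\ast$), after which Abramov's formula transfers existence and uniqueness to the semiflow. Your final paragraph correctly senses this cusp difficulty, but deferring it wholesale to the citations is deferring precisely the substance of the proof: as written, with the full shift in place of the accelerated one, the argument cannot be completed.
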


\begin{definition}
The division of the measure~$\mu_X$ from Propositoin~\ref{max-entropy-prop} by
the Lebesgue measure~$dt$ on~$\mathbb R$
gives rise to a measure on~$\rg_{\mathrm{irr}}$, which will
be denoted by~$\mu_{\rg}$ and called \emph{the natural measure on~$\rg$}.
It is continued to the whole of~$\rg$ by putting~$\mu_{\rg}(\rg\setminus\rg_{\mathrm{irr}})=0$.
\end{definition}

The natural measure on~$\mathscr R$ has the following properties
which are established in~\cite{AHS2} and follow from O.\,Sarig's works~\cite{S0,S2}.

\begin{theorem}\label{measure-properties-th}
\emph{(i)} The total measure of the Rauzy gasket with respect to~$\mu_{\rg}$ is finite.
\\
\emph{(ii)} The map~$F$ defined by~\emph{\eqref{mapF-eq}} is ergodic with respect to~$\mu_{\rg}$.
\\
\emph{(iii)} For any open subset~$X\subset\rg$, we have~$\mu_{\rg}(X)>0$.
\end{theorem}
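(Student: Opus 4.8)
The plan is to reduce all three statements to the thermodynamic formalism of the base map $F$ and then read off the conclusions from the properties of the equilibrium state it produces. First I would pass to the symbolic model: by Proposition~\ref{regular-rauzy-prop}(ii) the coding map $\chi$ conjugates $(\rg_{\mathrm{irr}},F)$ to the shift $\sigma$ on $\mathscr X\subset\{1,2,3\}^{\mathbb N}$, and $\mathscr X$ carries full measure for any fully supported shift-invariant measure on the full $3$-shift. The suspension $X$ over $(\rg_{\mathrm{irr}},F)$ with roof $r$ is then a suspension semiflow over a topologically mixing Markov shift, and by the Abramov--Bowen--Ruelle correspondence its measure of maximal entropy $\mu_X$ (Proposition~\ref{max-entropy-prop}) projects to the $F$-invariant measure $\nu$ that is the equilibrium state on the base for the potential $-h\,r$, where $h=h_{\mathrm{top}}(\phi)$ is fixed by the pressure equation $P_F(-h\,r)=0$. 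Dividing $\mu_X$ by $dt$ as in the definition gives $\mu_{\rg}=\nu/\int r\,d\nu$, so $\mu_{\rg}$ is a constant multiple of $\nu$.

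Granting this identification, part~(i) is immediate: since $\mu_X$ is a \emph{probability} measure, its normalising constant $\int r\,d\nu$ is finite, and since $r>0$ on all of $\rg_{\mathrm{irr}}$ (the roof vanishes only at the vertices of $\Delta$, which lie outside $\rg_{\mathrm{irr}}$), we have $\int r\,d\nu>0$; hence the total mass $\mu_{\rg}(\rg)=1/\int r\,d\nu$ is finite and positive. For part~(ii) I would invoke Sarig's theory directly: the equilibrium state of a regular potential on a topologically mixing countable Markov shift is ergodic (indeed exact), and ergodicity of $F$ with respect to $\nu$ is the same as ergodicity with respect to the proportional measure $\mu_{\rg}$. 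Conceptually this is also forced by the flow picture, where $\mu_X$ is the unique measure of maximal entropy of a transitive semiflow and hence ergodic, while $F$ is the first-return map to the transversal $\rg_{\mathrm{irr}}\times\{0\}$.

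For part~(iii) the key input is that $\nu$ has the Gibbs property, which gives $\nu(\chi^{-1}[\slbf i])>0$ for every finite word $\slbf i$. Let $U\subset\rg$ be nonempty and open. Since $\rg\setminus\rg_{\mathrm{irr}}=\bigcup_{\slbf i}f_{\slbf i}(\partial\Delta)$ is a countable union of sets that are nowhere dense in $\rg$ (each is one-dimensional, whereas $\rg$ contains no isolated arc), the set $\rg_{\mathrm{irr}}$ is dense, so I may pick $\lambda\in U\cap\rg_{\mathrm{irr}}$ with directing word $\slbf a=\chi(\lambda)$. By the contraction estimate in the proof of Proposition~\ref{regular-rauzy-prop}(ii) the triangles $f_{a_1\ldots a_k}(\Delta)$ shrink to $\{\lambda\}$, hence $f_{a_1\ldots a_k}(\mathrm{int}\,\Delta)\cap\rg\subset U$ for $k$ large; this set contains $\chi^{-1}[a_1\ldots a_k]$, which has positive $\nu$-measure, so $\mu_{\rg}(U)>0$.

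The substantive obstacle is hidden in the identification of $\mu_{\rg}$ with a genuine finite Gibbs equilibrium state, that is, in verifying Sarig's hypotheses for the \emph{unbounded} roof $r=-\log\max_i\lambda_i$, which blows up along long constant blocks of the directing word. The clean way to handle this is to induce on a suitable subset, turning the full $3$-shift with unbounded roof into a countable Markov shift whose induced roof has exponentially small tails; here the estimates of~\cite{AGY} enter to yield finite Gurevich pressure and positive recurrence, from which Sarig's Ruelle--Perron--Frobenius theorem delivers the unique Gibbs equilibrium state $\nu$ with $0<\int r\,d\nu<\infty$. Once this recurrence is in place, parts~(i)--(iii) follow as above with only formal work.
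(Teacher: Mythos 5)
The first thing to note is that the paper itself contains no proof of Theorem~\ref{measure-properties-th}: it is stated as a result ``established in~\cite{AHS2}'' and following from Sarig's works~\cite{S0,S2}, with the ingredients named just before the definition of~$\mu_{\rg}$ --- thermodynamic formalism for countable Markov shifts, the suspension construction of~\cite{V}, and estimates from~\cite{AGY}. Your proposal reconstructs exactly this route: Abramov's correspondence identifying~$\mu_{\rg}$ with a constant multiple of the equilibrium state~$\nu$ of~$-hr$ on the base, part~(i) from $0<\int r\,d\nu<\infty$, part~(ii) from ergodicity of equilibrium states on topologically mixing countable Markov shifts, part~(iii) from the Gibbs property together with shrinking cylinders. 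So in approach your argument matches the proof the paper defers to, and its architecture is sound.

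Two corrections are needed, however. The more important one concerns your diagnosis of the ``substantive obstacle'': the roof $r(\lambda)=-\log\max_i\lambda_i$ is \emph{bounded}, not unbounded --- since $\lambda_1+\lambda_2+\lambda_3=1$ forces $\max_i\lambda_i\geqslant1/3$, one has $0\leqslant r\leqslant\log 3$ everywhere. The genuine difficulty is the opposite degeneracy: $r$ is not bounded away from zero, because $r\to0$ at the vertices of~$\Delta$, which are parabolic fixed points of the branches~$f_i$ (a Farey-map-like phenomenon); this is what obstructs a naive application of the theory to the $3$-letter model. Unboundedness appears only \emph{after} the acceleration you propose: over a block~$i^k$ the induced roof is a Birkhoff sum of order~$\log k$. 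Fortunately, your prescription --- induce to the countable-alphabet shift on blocks~$i^k$, verify AGY-type tail estimates, invoke positive recurrence and Sarig's Ruelle--Perron--Frobenius theorem (plus integrability of the return time, so that the induced equilibrium state spreads back to a finite $F$-invariant measure, which is precisely part~(i)) --- is exactly the right remedy and is what~\cite{AHS2} carries out, so the misdiagnosis does not derail the plan. Separately, in~(iii) your parenthetical justification that each $f_{\slbf i}(\partial\Delta)$ is nowhere dense in~$\rg$ (``one-dimensional\dots no isolated arc'') is not an argument. The clean statement is that the nested triangles $f_{i_1\ldots i_k}(\Delta)$ around \emph{any} point of~$\rg$ shrink to that point: the paper proves this for directing words in~$\mathscr X$, and for words eventually using only two letters it is an elementary continued-fraction/matrix estimate. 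From this, every nonempty relatively open subset of~$\rg$ contains some $f_{i_1\ldots i_k}(\rg_{\mathrm{irr}})$, i.e.\ a cylinder image of positive measure, which yields both the density you wanted and~(iii) directly.
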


\section{Arnoux--Rauzy IETs} \label{AR-sec}
The first construction that naturally gave rise to the study of the Rauzy gasket appeared in \cite{AR}
in connection with a family of IETs which we now describe.

Denote by~$\mathbb S^1$ the circle~$\mathbb R/\mathbb Z$.

\begin{definition}
For~$\lambda\in\Delta$, we define \emph{the Arnoux--Rauzy IET}~$T_\lambda^{\mathrm{AR}}$
as the composition~$\Phi\circ T_\lambda':\mathbb S^1\rightarrow\mathbb S^1$, where~$\Phi$
is the translation (rotation) by~$1/2$,
that is,~$\Phi(x)=x+1/2$, and~$T_\lambda'$ is the IET of the interval~$[0,1)$ defined by the permutation
$$\begin{pmatrix}1&2&3&4&5&6\\2&1&4&3&6&5\end{pmatrix}$$
and parameters~$\bigl(\frac{\lambda_1}2,\frac{\lambda_1}2,\frac{\lambda_2}2,\frac{\lambda_2}2,\frac{\lambda_3}2,\frac{\lambda_3}2\bigr)$.
The explicit formula is this:
$$T^{\mathrm{AR}}_\lambda(x)=\left\{
\begin{aligned}
&x+\frac{1+\lambda_1}2,&&\text{if }x\in[0,\lambda_1/2)+\mathbb Z,\\
&x+\frac{1-\lambda_1}2,&&\text{if }x\in[\lambda_1/2,\lambda_1)+\mathbb Z,\\
&x+\frac{1+\lambda_2}2,&&\text{if }x\in[\lambda_1,\lambda_1+\lambda_2/2)+\mathbb Z,\\
&x+\frac{1-\lambda_2}2,&&\text{if }x\in[\lambda_1+\lambda_2/2,\lambda_1+\lambda_2)+\mathbb Z,\\
&x+\frac{1+\lambda_3}2,&&\text{if }x\in[\lambda_1+\lambda_2,\lambda_1+\lambda_2+\lambda_3/2)+\mathbb Z,\\
&x+\frac{1-\lambda_3}2,&&\text{if }x\in[\lambda_1+\lambda_2+\lambda_3/2,1)+\mathbb Z.\\
\end{aligned}
\right.$$

We also define
$$\widetilde T^{\mathrm{AR}}_\lambda(x)=\lim_{y\rightarrow x-0}T^{\mathrm{AR}}_\lambda(y).$$
\end{definition}

\begin{definition}
\emph{An orbit} of the transformation~$T^{\mathrm{AR}}_\lambda$ (respectively,~$\widetilde T^{\mathrm{AR}}_\lambda$) refers
to any minimal subset~$X\in\mathbb S^1$ such that~$T^{\mathrm{AR}}_\lambda(X)=X$ (respectively,~$\widetilde T^{\mathrm{AR}}_\lambda(X)=X$).

An orbit is called \emph{singular} if it contains a point where the transformation is discontinuous, that is,
a point from~$\{0,\lambda_1/2,\lambda_1,\lambda_1+\lambda_2/2,\lambda_1+\lambda_2,1-\lambda_3/2\}(\mathrm{mod}\,\mathbb Z)$, and \emph{regular} otherwise.
\end{definition}

Regular orbits of~$T^{\mathrm{AR}}_\lambda$ and~$\widetilde T^{\mathrm{AR}}_\lambda$ are clearly the same,
but singular orbits are different.

\begin{proposition}\label{AR-minimal-prop}
For~$\lambda\in\Delta$, the Arnoux--Rauzy IET~$T^{\mathrm{AR}}_\lambda$ is minimal if and only if~$\lambda\in\rg_{\mathrm{irr}}$.
The transformation~$T^{\mathrm{AR}}_\lambda$ \emph(or~$\widetilde T^{\mathrm{AR}}_\lambda$\emph) has a regular finite orbit if and only if~$\lambda\not\in\rg$
or~$\lambda\in\mathbb Q^3$.
\end{proposition}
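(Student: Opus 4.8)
The plan is to reduce both statements to combinatorial facts about the renormalization structure established in Proposition~\ref{regular-rauzy-prop}, exploiting the fact that the Arnoux--Rauzy IET~$T^{\mathrm{AR}}_\lambda$ is built precisely so that applying an inverse branch~$f_i^{-1}$ on the parameter side corresponds to taking a first return map (a Rauzy induction step) on the dynamical side. The key is to set up this self-similarity explicitly: I would first check that the first return map of~$T^{\mathrm{AR}}_\lambda$ to a suitable subinterval, rescaled, is again an Arnoux--Rauzy IET with parameter~$f_i^{-1}(\lambda)$, valid exactly when the largest coordinate~$\lambda_i\geqslant1/2$, i.e.\ when~$\lambda\notin\Delta_0$. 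Thus the renormalization is defined and one can iterate it precisely along a directing word, and minimality of~$T^{\mathrm{AR}}_\lambda$ becomes equivalent to the possibility of iterating the renormalization forever while the combinatorics stays irreducible.

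\medskip
For the minimality half, I would argue as follows. If~$\lambda\in\rg_{\mathrm{irr}}$, then~$\lambda$ has a (unique, by Proposition~\ref{regular-rauzy-prop}(ii)) directing word in which all three letters occur infinitely often, so the renormalization can be applied infinitely often and the induced intervals shrink to zero length (using the Hilbert-metric contraction already exploited in the proof of Proposition~\ref{regular-rauzy-prop}(ii), which guarantees the nested intervals collapse to a point); standard Rauzy-induction reasoning then gives that every orbit is dense, hence minimality. Conversely, if~$\lambda\notin\rg_{\mathrm{irr}}$, then either~$\lambda\notin\rg$, in which case after finitely many renormalization steps the parameter lands in the interior of some~$f_{\slbf i}(\Delta_0)$ and the IET degenerates into one whose combinatorics becomes reducible (producing an invariant proper subinterval, hence a non-dense orbit), or~$\lambda\in\rg\setminus\rg_{\mathrm{irr}}$, where by part~(i) of the proposition and the analysis of~$\badset$ one of the three letters ceases to appear after some stage; this means one of the three length parameters eventually vanishes under renormalization, again forcing reducibility and a periodic (finite) orbit. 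The rational case~$\lambda\in\mathbb Q^3$ is handled by noting that rational parameters force commensurable return times and hence genuine periodicity.

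\medskip
For the second assertion about regular finite orbits, I would combine the above: a regular finite (hence periodic) orbit exists exactly when some renormalized parameter has a vanishing coordinate that is \emph{not} witnessed by a discontinuity, and the dichotomy~$\lambda\notin\rg$ versus~$\lambda\in\mathbb Q^3$ is exactly the dichotomy between ``the escape from~$\Delta$ under inverse maps happens'' (the~$x_{i+1}=x_i/(1-x_i)\to\infty$ mechanism from the proof of~\eqref{incl2}) and ``a rational linear relation~$\langle n,\lambda\rangle=0$ with the stabilized~$n^k$ from the quadratic-form argument.'' Concretely, I would show that the stabilized integer vector~$n^k$ and the vanishing coordinates it detects correspond precisely to the lengths of the intervals exchanged by a periodic regular orbit, so that the two cases in the statement account for all finite regular orbits and no others.

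\medskip
\textbf{The main obstacle} I anticipate is the bookkeeping that makes the renormalization step exact: one must verify that the first-return/rescaling operation sends~$T^{\mathrm{AR}}_\lambda$ to~$T^{\mathrm{AR}}_{f_i^{-1}(\lambda)}$ \emph{with the same permutation} and correctly tracks the six discontinuity points through the induction, including what happens to singular versus regular orbits (since the statement distinguishes~$T^{\mathrm{AR}}_\lambda$ from~$\widetilde T^{\mathrm{AR}}_\lambda$). In particular the delicate point is showing that a \emph{regular} orbit stays regular under renormalization and that the collapse of an interval at a reducible stage produces a finite regular orbit rather than merely a singular periodic configuration; this is where the careful distinction between the hypotheses~$\lambda\notin\rg$ and~$\lambda\in\mathbb Q^3$ must be matched against the orbit-type dichotomy, and where I would expect to spend most of the technical effort.
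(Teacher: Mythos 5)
Your overall strategy (renormalization via first-return maps matching the maps $f_i^{-1}$, then case analysis along the directing word) is the same as the paper's, but there are two genuine gaps at exactly the points where the real work lies.

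First, the minimality direction. Your inference ``renormalization applies forever $+$ induced intervals shrink to zero $\Rightarrow$ every orbit is dense'' is not valid, and in fact it fails inside this very family: take $\lambda\in\partial\Delta$ with, say, $\lambda_3=0$ and $\lambda_1/\lambda_2$ irrational. Then $\lambda\in\rg\setminus\rg_{\mathrm{irr}}$, the renormalization runs forever (the letters $1$ and $2$ alternate according to the continued fraction of $\lambda_1/\lambda_2$), and the induced intervals do shrink to zero length; nevertheless $T^{\mathrm{AR}}_\lambda$ is \emph{not} minimal --- the circle splits into two invariant arcs on each of which the map is an irrational rotation, and all orbits merely accumulate on the single discontinuity point to which the nested induced intervals collapse. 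So shrinking induced intervals only yield a common accumulation point, not density. The missing ingredient is a Keane-type no-connection argument: one must use that \emph{every} renormalized parameter $f_{i_1\ldots i_k}^{-1}(\lambda)$ stays in $\rg_{\mathrm{irr}}$ to check directly that no discontinuity point is mapped to a discontinuity point, and only then conclude minimality (the paper does this by passing to the suspension foliation, showing the transverse measures of the renormalized transversals tend to zero, and ruling out saddle connections). Separately, your justification of the shrinkage itself is misattributed: the Hilbert-metric contraction in Proposition~\ref{regular-rauzy-prop}(ii) concerns the parameter simplices $f_{i_1\ldots i_k}(\Delta)$, not the physical induced intervals; the correct reason the interval lengths tend to zero is that the scaling factor at step $j$ is at most $3/4$ whenever $i_j\ne i_{j+1}$, which happens infinitely often.

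Second, your mechanism for the regular-finite-orbit dichotomy is inverted. Regular finite orbits in the case $\lambda\notin\rg$ (with $\lambda\notin\mathbb Q^3$) do \emph{not} come from a coordinate vanishing or an interval collapsing: they come from the fact that the renormalization terminates with a parameter in the interior of $\Delta_0$, where one exhibits explicit periodic regular orbits of period three (e.g.\ $\bigl(T^{\mathrm{AR}}_\lambda\bigr)^3(x)=x$ on $\bigl((\lambda_1-\lambda_3)/2,\lambda_2/2\bigr)$ when $1/2>\lambda_1\geqslant\lambda_2\geqslant\lambda_3$), which are then pulled back through the induction. A vanishing coordinate under renormalization is precisely the opposite case $\lambda\in\rg\setminus(\rg_{\mathrm{irr}}\cup\mathbb Q^3)$, where the map renormalizes to a boundary parameter and decomposes into two irrational rotations, hence has \emph{no} finite orbits at all --- this is the case you must exclude, not the one producing periodic orbits. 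Relatedly, ``$\lambda\notin\rg$ versus $\lambda\in\mathbb Q^3$'' is not a dichotomy (rational points may lie in $\rg$ or not), so the stabilized integer vector $n^k$ from the quadratic-form argument cannot be made to ``detect the lengths of the intervals exchanged by a periodic regular orbit''; it detects rational dependence, which is the no-finite-orbit case.
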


\begin{proof}
We follow the ideas of~\cite{AR}.

Clearly, if~$\lambda\in\Delta\cap\mathbb Q^3$ then all orbits of~$T^{\mathrm{AR}}_\lambda$ are finite. We assume~$\lambda\notin\mathbb Q^3$
from now till the end of the proof.

Suppose that~$\lambda_1$, $\lambda_2$, $\lambda_3$ satisfy the strict triangle inequalities,
that is, $\lambda\in\Delta_0\setminus\partial\Delta_0$. Then it is a direct check that~$T^{\mathrm{AR}}_\lambda$ has
regular orbits of order three. For instance, if~$1/2>\lambda_1\geqslant\lambda_2\geqslant\lambda_3$,
then~$\bigl(T^{\mathrm{AR}}_\lambda\bigr)^3(x)=x$ for all~$x\in\bigl((\lambda_1-\lambda_3)/2,\lambda_2/2\bigr)$.
Other cases are similar.

If~$\lambda\in\partial\Delta$, then one can see that~$T^{\mathrm{AR}}_\lambda$ is not minimal and has no
finite orbits. For instance, if~$\lambda_3=0$,
then the interval~$\bigl[\lambda_1/2,(\lambda_1+1)/2\bigr)$ and its complement in~$\mathbb S^1$ are invariant under~$T^{\mathrm{AR}}_\lambda$,
and the restriction of~$T^{\mathrm{AR}}_\lambda$ to each of these invariant subsets is conjugate to an irrational rotation,
hence all orbits are infinite.

To treat the general case, we apply a version of the Rauzy induction.
The point is that, for~$\lambda\in f_i(\Delta)$, $i=1,2,3$,
the transformations~$T^{\mathrm{AR}}_\lambda$ and~$T^{\mathrm{AR}}_{f_i^{-1}(\lambda)}$ have the same
dynamical properties. Indeed, let~$\lambda=f_1(\lambda')$, $\lambda'\in\Delta$. This means that~$\lambda_1\geqslant1/2$.
Every orbit visits~$[0,\lambda_1)$, since $x\in[\lambda_1,1)$ implies~$T^{\mathrm{AR}}_\lambda(x)\in[0,\lambda_1)$.
Define~$T:[0,\lambda_1)\rightarrow[0,\lambda_1)$ by
$$T(x)=\left\{\begin{aligned}
&T^{\mathrm{AR}}_\lambda(x),&&\text{if }T^{\mathrm{AR}}_\lambda(x)\in[0,\lambda_1),\\
&T^{\mathrm{AR}}_\lambda\bigl(T^{\mathrm{AR}}_\lambda(x)\bigr),&&\text{otherwise.}
\end{aligned}\right.$$
By identifying~$\lambda_1$ with~$0$ we may view~$T$ as a transformation of the circle~$\mathbb R/(\lambda_1\mathbb Z)$.
It is then a direct check that~$T$ is conjugate to~$T^{\mathrm{AR}}_{\lambda'}$, and the conjugating map
is just a stretching with coefficient~$1/\lambda_1$.

If~$i=2$ or~$3$, we replace~$[0,\lambda_1)$ by~$[\lambda_1,\lambda_1+\lambda_2)$ or~$[\lambda_1+\lambda_2,1)$,
respectively, in the construction of~$T$.

If~$\lambda\notin\rg$, then, for some finite sequence~$\slbf i$ with entries in~$\{1,2,3\}$,
we have~$\lambda\in f_{\slbf i}(\Delta_0\setminus\partial\Delta_0)$,
and hence~$T^{\mathrm{AR}}_\lambda$ has regular finite orbits.

If~$\lambda\in\rg\setminus\rg_{\mathrm{irr}}$, then, according to Proposition~\ref{regular-rauzy-prop},
there is a finite sequence~$\slbf i$ such that~$\lambda\in f_{\slbf i}(\partial\Delta)$,
and hence~$T^{\mathrm{AR}}_\lambda$ is not minimal and has no regular finite orbits.

To see that~$T_\lambda^{\mathrm{AR}}$ is minimal for all~$\lambda\in\rg_{\mathrm{irr}}$ we introduce
a measured foliation for which~$T_\lambda^{\mathrm{AR}}$ is the Poincar\'e map.
For~$\lambda\in\Delta$, we define~$\Sigma^{\mathrm{AR}}_\lambda$ as `the mapping torus' of~$T^{\mathrm{AR}}_\lambda$,
that is, a surface obtained from the cylinder~$\mathbb S^1\times[0,1]$ by identifying~$(x,1)$ with~$(T^{\mathrm{AR}}(x),0)$
for all~$x\in\mathbb S^1$ (see the top picture in Figure~\ref{ar-surface-fig}).
\begin{figure}[ht]
\includegraphics[scale=.7]{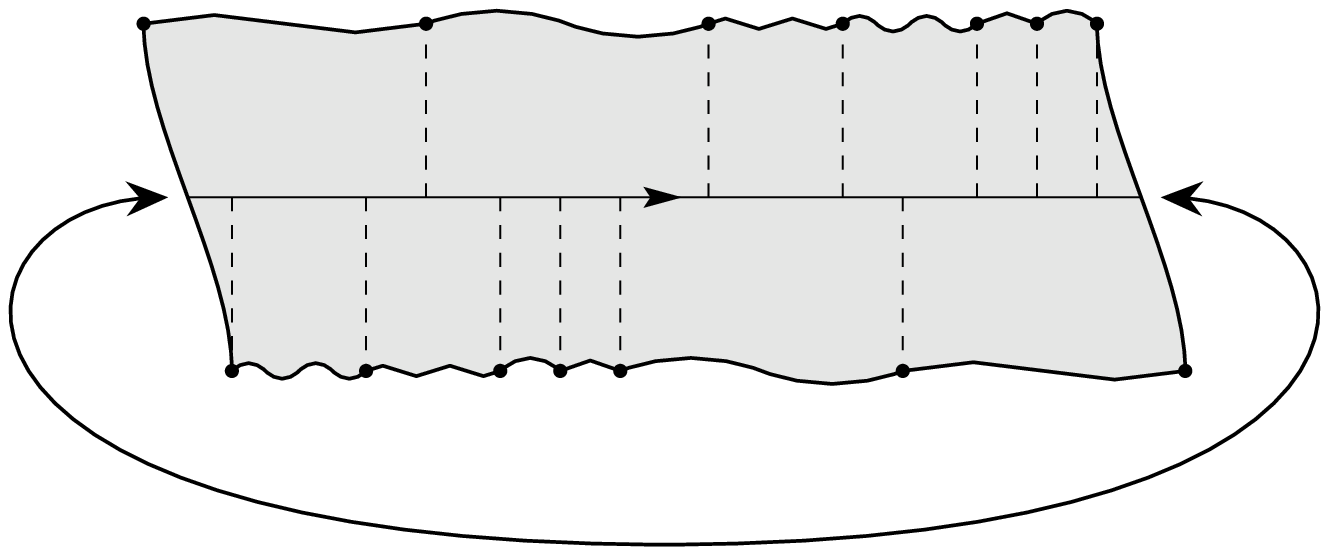}\put(-220,100){$1$}\put(-160,100){$2$}\put(-120,100){$3$}
\put(-93,100){$4$}\put(-74,100){$5$}\put(-62,100){$6$}
\put(-217,65){$4$}\put(-190,65){$3$}\put(-170,65){$6$}\put(-158,65){$5$}\put(-125,65){$2$}\put(-65,65){$1$}
\put(-155,20){identify}\put(-145,89){$\gamma_\lambda$}

\includegraphics[scale=.7]{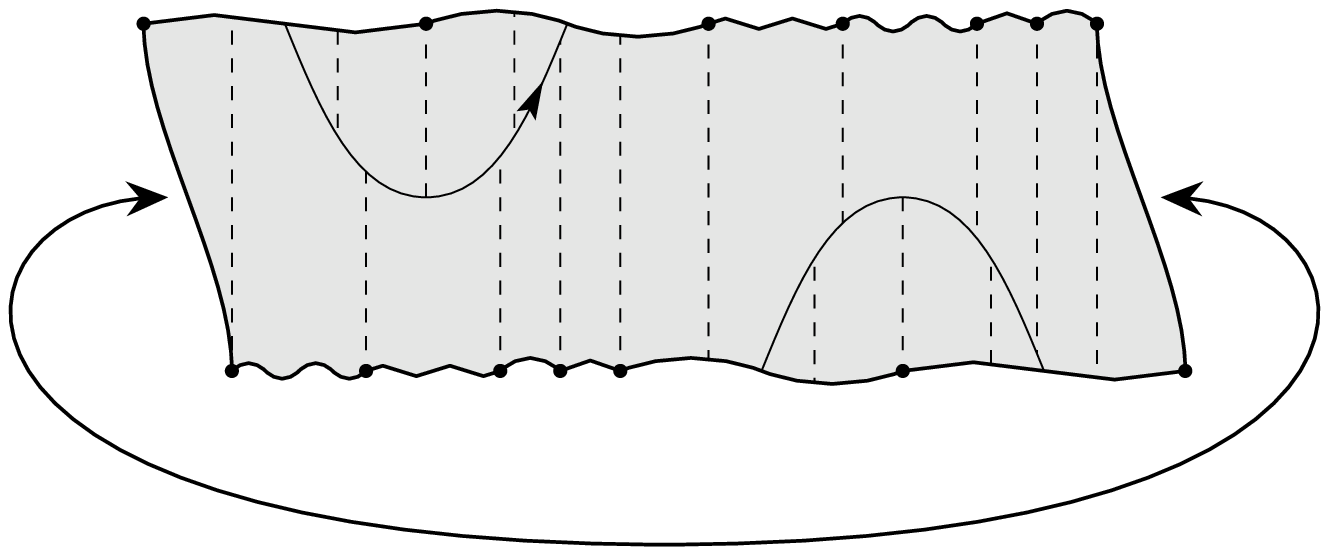}\put(-93,100){$4'$}\put(-74,100){$5'$}\put(-62,100){$6'$}
\put(-217,65){$4'$}\put(-190,65){$3'$}\put(-170,65){$6'$}\put(-158,65){$5'$}\put(-120,100){$3'$}
\put(-200,100){$1'$}\put(-182,100){$2'$}\put(-240,100){$1'$}
\put(-104,65){$2'$}\put(-86,65){$1'$}\put(-49,65){$1'$}
\put(-143,82){$2'$}\put(-213,110){$\scriptstyle6'$}\put(-169,110){$\scriptstyle3'$}
\put(-117,52){$\scriptstyle5'$}\put(-74,52){$\scriptstyle4'$}
\put(-155,20){identify}\put(-217,90){$\gamma_\lambda'$}
\caption{A step of the modified Rauzy induction}\label{rauzy-step-fig}\label{ar-surface-fig}
\end{figure}
The surface is endowed with a singular foliation denoted by~$\mathscr F^{\mathrm{AR}}_\lambda$,
coming from the foliation of the cylinder~$\mathbb S^1\times[0,1]$ by vertical arcs~$\{x\}\times[0,1]$.
One can see that, for~$\lambda\notin\partial\Delta$, the surface~$\Sigma^{\mathrm{AR}}_\lambda$
is an orientable compact surface of genus three, and the foliation~$\mathscr F^{\mathrm{AR}}_\lambda$
has two singularities, each of which is a monkey saddle. The Poincar\'e map of this foliation
on the transversal~$\gamma_\lambda=\mathbb S^1\times\{1/2\}$ (outside of six points where it
is not well defined) is precisely~$T^{\mathrm{AR}}_\lambda$. The decomposition into
six strips corresponding to the six intervals of the transformation is shown in dashed lines.

Minimality of~$T^{\mathrm{AR}}_\lambda$ is equivalent to that of~$\mathscr F_\lambda$, so
it suffices to prove the latter.

Suppose that~$\lambda\in f_i(\rg_{\mathrm{irr}})$.
As we
have seen above,
there is a subinterval of~$\gamma_\lambda$ on which the first return map of~$T_\lambda^{\mathrm{AR}}$,
is equivalent, after identifying the endpoints of the interval and a rescaling, to~$T_{f_i^{-1}(\lambda)}^{\mathrm{AR}}$.
This means that there is a closed transversal~$\gamma'_\lambda$
on which the Poincar\'e map induced by~$\mathscr F_\lambda$ is equivalent, after
rescaling, to~$T_{f_i^{-1}(\lambda)}^{\mathrm{AR}}$. Such a transversal
is shown at the bottom of Figure~\ref{rauzy-step-fig} in the case~$i=1$,
where the new decomposition into six strips is also indicated.

Thus, there is a homeomorphism~$\Sigma_\lambda\rightarrow\Sigma_{f^{-1}_i(\lambda)}$
that takes the foliation~$\mathscr F_\lambda$ to~$\mathscr F_{f_i^{-1}(\lambda)}$,
the transversal~$\gamma'_\lambda$ to~$\gamma_{f_i^{-1}(\lambda)}$, and
the Lebesgue measure on~$\gamma'_\lambda$ to the Lebesgue measure~$\gamma_{f_i^{-1}(\lambda)}$
multiplied by~$\lambda_i$.

Let~$i_1i_2i_3\ldots$ be the directing word for~$\lambda$.
Proceeding inductively, we can find a closed transversal~$\gamma_k$ of~$\mathscr F_\lambda$
on which the Poincar\'e map is equivalent, after rescaling, to~$T^{\mathrm{AR}}_{f_{i_1\ldots i_k}^{-1}(\lambda)}$.
The scaling coefficient equals
\begin{equation}\label{scaling-coef-eq}
\lambda_{i_1}\cdot\prod_{j=2}^{k}\bigl(f_{i_1\ldots i_{j-1}}^{-1}(\lambda)\bigr)_{i_j}.
\end{equation}
One can see that if~$i_j\ne i_{j+1}$, then~$\bigl(f_{i_1\ldots i_{j-1}}^{-1}(\lambda)\bigr)_{i_j}\leqslant3/4$.
Therefore, the transverse measure of~$\gamma_k$, which is equal to~\eqref{scaling-coef-eq},
tends to zero when~$k$ goes to infinity.

Suppose that~$\mathscr F_\lambda$ has a saddle connection~$\alpha$. Then, for some~$k$,
the transversal~$\gamma_k$ intersects~$\alpha$ exactly once.
This means that the image of some discontinuity point
of~$T^{\mathrm{AR}}_{\lambda'}$ is again a discontinuity point,
where~$\lambda'=f_{i_1\ldots i_k}^{-1}(\lambda)$.
It is a direct check that this does not happen when~$\lambda'\in\rg_{\mathrm{irr}}$, a contradiction.
Hence, $\mathscr F_\lambda$ has no saddle connections and thus is minimal.
\end{proof}

\begin{definition}\label{recurrent-def}
\emph{A factor} of an infinite word~$\slbf i=i_1i_2i_3\ldots$ is a subword of the form
$$i_ki_{k+1}\ldots i_{k+n-1},\quad k,n\in\mathbb N.$$

An infinite word~$\slbf i$ is called \emph{recurrent} if any factor of~$\slbf i$
appears in~$\slbf i$ infinitely often.
\end{definition}

For a measured foliation~$\mathscr F$ on a surface,
denote by~$\mathscr M(\mathscr F)$ the space of all invariant
transverse Borel measures of~$\mathscr F$\emph.

\begin{theorem}\label{typically-uniquely-ergodic-th}
For any~$\lambda\in\rg_{\mathrm{irr}}$ such that the directing word for~$\lambda$ is recurrent,
the IET~$T_\lambda^{\mathrm{AR}}$ is uniquely ergodic, or, equivalently,
$\dim\mathscr M(\mathscr F_\lambda)=1$.
\end{theorem}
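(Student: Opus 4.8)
The plan is to identify the cone $\mathscr M(\mathscr F_\lambda)$ with a nested intersection of cones produced by the renormalization matrices $M_1,M_2,M_3$, and then to show, using recurrence, that this intersection is a single ray. Let $i_1i_2i_3\ldots=\chi(\lambda)$ be the directing word. From the proof of Proposition~\ref{AR-minimal-prop} we have a nested family of closed transversals $\gamma_0=\gamma_\lambda\supset\gamma_1\supset\gamma_2\supset\cdots$ such that the first-return map of $\mathscr F_\lambda$ to $\gamma_k$ is, after rescaling, the Arnoux--Rauzy IET $T^{\mathrm{AR}}_{\lambda^{(k)}}$ with $\lambda^{(k)}=f_{i_1\ldots i_k}^{-1}(\lambda)$. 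In particular every level is again an Arnoux--Rauzy IET on six intervals grouped into three equal pairs, so any $m\in\mathscr M(\mathscr F_\lambda)$ is recorded at level $k$ by the triple $v^{(k)}\in\mathbb R^3_{\geqslant0}$ of $m$-masses of the three pairs on $\gamma_k$, the equality of the two masses inside each pair being forced by the combinatorics of the permutation. Since $\lambda^{(k-1)}=f_{i_k}(\lambda^{(k)})$, the same change of variables relates consecutive mass data, namely $v^{(k-1)}=M_{i_k}v^{(k)}$; conversely, any compatible family of nonnegative triples assembles into an invariant transverse measure. Writing $A_k:=M_{i_1}\cdots M_{i_k}$, this yields
$$\mathscr M(\mathscr F_\lambda)\;\cong\;\bigcap_{k\geqslant0}A_k\,\mathbb R^3_{\geqslant0},$$
so that the assertion $\dim\mathscr M(\mathscr F_\lambda)=1$ becomes the statement that this nested intersection is one-dimensional.

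Next I would record a positivity fact about the matrices. For a nonnegative nonzero vector $\mathbf v$, the product $M_i\mathbf v$ leaves every coordinate but the $i$-th unchanged and turns the $i$-th coordinate into the sum of all coordinates; hence $\operatorname{supp}(M_i\mathbf v)=\operatorname{supp}(\mathbf v)\cup\{i\}$. Applying this repeatedly, the support of $M_{\slbf w}\mathbf e_j$ equals $\{j\}\cup\{\text{letters occurring in }\slbf w\}$, so whenever a finite word $\slbf w$ contains all three letters the matrix $M_{\slbf w}$ is strictly positive (for instance $M_1M_2M_3$ already is). A strictly positive matrix is a strict contraction of the Hilbert projective metric on the interior of $\mathbb R^3_{\geqslant0}$, with some contraction factor $<1$; this is exactly the mechanism already exploited in the proof of Proposition~\ref{regular-rauzy-prop}. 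The remaining matrices $M_i$, being nonnegative and mapping the positive octant into itself, are non-expanding for this metric.

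Now I would invoke recurrence. Since $\lambda\in\rg_{\mathrm{irr}}$, every letter occurs infinitely often in the directing word, so some finite factor $\slbf w_0$ of it contains all three letters. Recurrence of the directing word guarantees that $\slbf w_0$ occurs infinitely often, hence along pairwise disjoint windows; let $\tau_0<1$ be the fixed Hilbert contraction factor of the fixed strictly positive matrix $M_{\slbf w_0}$. Factoring $A_k$ into the successive blocks equal to $M_{\slbf w_0}$ and the non-expanding matrices in between, we get that the Hilbert diameter of $A_k\mathbb R^3_{\geqslant0}$ is bounded by $\tau_0^{N(k)}$ times the finite diameter of the image of the first positive block, where $N(k)\to\infty$ counts the disjoint occurrences of $\slbf w_0$ inside $i_1\ldots i_k$. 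Thus the diameter tends to $0$, the nested intersection collapses to a single ray, and $\dim\mathscr M(\mathscr F_\lambda)=1$, i.e.\ $T^{\mathrm{AR}}_\lambda$ is uniquely ergodic.

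The routine parts are the positivity computation and the contraction estimate. The step demanding genuine care is the identification in the first paragraph: one must verify that the renormalization of Proposition~\ref{AR-minimal-prop}, set up there only for the length data, transports \emph{arbitrary} invariant transverse measures level by level through the \emph{same} matrices $M_{i_k}$, and that the within-pair equality of masses is forced, so that $\mathscr M(\mathscr F_\lambda)$ is faithfully the three-dimensional cone intersection and nothing larger. This is where the special Arnoux--Rauzy combinatorics (and the double-cover structure of $\Sigma_\lambda$) enters, and it also explains why recurrence is the right hypothesis: the bare fact that each letter recurs---which is automatic on $\rg_{\mathrm{irr}}$---only produces positive blocks of a priori unbounded length, for which one has no uniform contraction bound and therefore cannot conclude that the product of contraction factors vanishes. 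Recurrence supplies a single fixed positive block recurring infinitely often, which is exactly what forces the collapse to a ray; consistently, non-recurrent directing words may fail to be uniquely ergodic, as the examples of~\cite{DS} show.
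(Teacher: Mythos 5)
The identification in your first paragraph---the very step you flag as ``demanding genuine care''---is false, and the proof collapses there. Equality of the two masses inside each pair is \emph{not} forced by invariance under $T^{\mathrm{AR}}_\lambda$. A measure $\mu\in\mathscr M(\mathscr F_\lambda)$ satisfying $\mu([0,\lambda_1/2])=\mu([\lambda_1/2,\lambda_1])$ and likewise for the other two pairs is precisely a measure invariant under the involution $\Psi$ induced by the permutation $(12)(34)(56)$, and (as explained after Corollary~\ref{two-measure-cor}) these are exactly the measures coming from the symbolic system of Section~\ref{words-seq}, which is in general only a \emph{factor} of the IET $T^{\mathrm{AR}}_\lambda$. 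That factor is uniquely ergodic for \emph{every} $\lambda\in\rg_{\mathrm{irr}}$ by Boshernitzan's theorem (Theorem~\ref{ar-words-th}(iv)). Hence, if your reduction of $\mathscr M(\mathscr F_\lambda)$ to triples were valid, every $T^{\mathrm{AR}}_\lambda$ with $\lambda\in\rg_{\mathrm{irr}}$ would be uniquely ergodic, with no recurrence hypothesis at all---contradicting Proposition~\ref{non-uniquely-ergodic-prop} and the examples of~\cite{DS}. The same contradiction is visible inside your own framework: the intersection $\bigcap_{k}M_{i_1}\cdots M_{i_k}\mathbb R^3_{\geqslant0}$ is a single ray for \emph{all} $\lambda\in\rg_{\mathrm{irr}}$, because blocks of the form $j_1j_2^kj_3$ contract the Hilbert metric by a factor uniformly bounded by that of the strictly positive matrix $A$ (this is exactly how uniqueness of $\lambda$ is proved in Proposition~\ref{regular-rauzy-prop}(ii)), and such blocks occur infinitely often in every word of $\mathscr X$. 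So your reduction proves too much, and your closing explanation of why recurrence is needed (long positive blocks with no uniform contraction bound) is incorrect in the three-dimensional setting: there, recurrence is simply irrelevant.

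The genuine obstruction is that $\mathscr M(\mathscr F_\lambda)$ sits in a cone of dimension up to six, not three. The paper's proof parametrizes an invariant measure by seven numbers: the masses $x_1,\dots,x_6$ of the six intervals, which need \emph{not} coincide within pairs, together with $x_7=\mu([0,1/2])-x_1-x_3-x_5$. Renormalization along the directing word transports these data by explicit $7\times7$ matrices $\widetilde N_i$; a separate argument (the coordinate $x_7$ is preserved at every step, while $|x_7|<\sum_{i=1}^6x_i^{(k)}$ and $\sum_{i=1}^6x_i^{(k)}\to0$) forces $x_7=0$, after which one works with the $6\times6$ matrices $N_i$ and the nested cones $X_k=N_{i_1}\cdots N_{i_k}\mathbb R^6_{>0}$. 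In this six-dimensional cone a block containing all three letters is \emph{not} enough for strict positivity---the paper uses products whose index sequence contains the pattern `12312'---and no analogue of the uniform $j_1j_2^kj_3$ bound can hold (it would again contradict Proposition~\ref{non-uniquely-ergodic-prop}). This is where recurrence genuinely enters: it guarantees that one fixed positive block recurs infinitely often, so a fixed contraction factor applies infinitely many times and $\bigcap_kX_k$ is a single ray. Your contraction mechanism in the later paragraphs is the right one; it is applied to the wrong cone.
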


\begin{proof}
By construction, there is a closed transversal~$\gamma_\lambda\subset\Sigma_\lambda=\Sigma_\lambda^{\mathrm{AR}}$
of the foliation~$\mathscr F_\lambda$ such that the Poincar\'e map induced by~$\mathscr F_\lambda$
on~$\gamma_\lambda$ coincides with~$T^{\mathrm{AR}}_\lambda$ after identifying~$\gamma_\lambda$
with the circle~$\mathbb S^1=\mathbb R/\mathbb Z$.

Suppose that~$\lambda\in f_i(\rg_{\mathrm{irr}})$.

Recall that the map~$T_\lambda^{\mathrm{AR}}$ is a composition of an IET
with permutation~$(12)(34)(56)$ and parameters~$(\lambda_1/2,\lambda_1/2,
\lambda_2/2,\lambda_2/2,\lambda_3/2,\lambda_3/2)$, and a rotation by~$1/2$.
Therefore, since~$T_\lambda^{\mathrm{AR}}$ is minimal,
any invariant measure~$\mu\in\mathscr M(\mathscr F_\lambda)$ is completely determined
by the following seven parameters:
$$\begin{aligned}
x_1&=\mu([0,\lambda_1/2]),\\
x_2&=\mu([\lambda_1/2,\lambda_1]),\\
x_3&=\mu([\lambda_1,\lambda_1+\lambda_2/2]),\\
x_4&=\mu([\lambda_1+\lambda_2/2,\lambda_1+\lambda_2]),\\
x_5&=\mu([\lambda_1+\lambda_2,\lambda_1+\lambda_2+\lambda_3/2],\\
x_6&=\mu([\lambda_1+\lambda_2+\lambda_3/2,1]),\\
x_7&=\mu([0,1/2])-x_1-x_3-x_5,
\end{aligned}$$
which satisfy the following conditions:
\begin{equation}\label{x-cond-eq}
x_1,x_2,x_3,x_4,x_5,x_6>0,\quad|x_7|<\sum_{j=1}^6x_j.
\end{equation}

We have seen in the proof of Proposition~\ref{AR-minimal-prop} that there is a
homeomorphism~$\varphi:\Sigma_\lambda\rightarrow\Sigma_{f^{-1}_i(\lambda)}$
that takes the foliation~$\mathscr F_\lambda$ to~$\mathscr F_{f_i^{-1}(\lambda)}$.

Let~$x_i'$, $i=1,\ldots,7$, be the seven parameters defined in the same
way for~$\varphi_*(\mu)$ viewed as an invariant measure for~$\mathscr F_{f_i^{-1}(\lambda)}$.
It is then a direct check that the vectors~$x=(x_1,\ldots,x_7)^\top$ and~$x'=(x_1',\ldots,x_7')^\top$ are related as follows:
$$x=\widetilde N_ix',$$
where
$$\widetilde N_1=\begin{pmatrix}
1&0&0&1&0&1&-1\\
0&1&1&0&1&0&1\\
0&0&1&0&0&0&0\\
0&0&0&1&0&0&0\\
0&0&0&0&1&0&0\\
0&0&0&0&0&1&0\\
0&0&0&0&0&0&1
\end{pmatrix},\quad
\widetilde N_2=\begin{pmatrix}
1&0&0&0&0&0&0\\
0&1&0&0&0&0&0\\
0&1&1&0&0&1&-1\\
1&0&0&1&1&0&1\\
0&0&0&0&1&0&0\\
0&0&0&0&0&1&0\\
0&0&0&0&0&0&1
\end{pmatrix},$$
$$\widetilde N_3=\begin{pmatrix}
1&0&0&0&0&0&0\\
0&1&0&0&0&0&0\\
0&0&1&0&0&0&0\\
0&0&0&1&0&0&0\\
0&1&0&1&1&0&-1\\
1&0&1&0&0&1&1\\
0&0&0&0&0&0&1
\end{pmatrix}.$$

Now let~$\slbf i=i_1i_2i_3\ldots$ be the directing word for~$\lambda$. By induction using
the reasoning above, one shows that
$$x=\widetilde N_{i_1}\widetilde N_{i_2}\ldots\widetilde N_{i_k}x^{(k)},$$
where the coordinates of the vector~$x^{(k)}\in\mathbb R^7$ also satisfy~\eqref{x-cond-eq}.
In particular, we have~$x_7=x_7^{(1)}=x_7^{(2)}=\ldots$, which implies~$|x_7|<\sum_{i=1}^6x_i^{(k)}$
for any~$k\in\mathbb N$. One can see that the sum~$\sum_{i=1}^6x_i^{(k)}$ tends to zero
as~$k$ goes to infinity, hence~$x_7=0$. So, the last coordinate of each vector~$x^{(k)}$
is zero, hence, it can be discarded.

For~$i=1,2,3$, let~$N_i$ be the matrix obtained from~$\widetilde N_i$ by deleting the last row
and the last column. Let also~$X_k$, where~$k\in\mathbb N$,
be the image of the positive cone~$\mathbb R_{>0}^6$
under the linear map with the matrix~$N_{i_1}N_{i_2}\ldots N_{i_k}$.

It is an easy check that all coordinates of the matrix~$N_1N_2N_3N_1N_2$ are strictly positive.
The same is true for any matrix of the form~$N_{j_1}N_{j_2}\ldots N_{j_m}$ provided
that
\begin{equation}\label{12312-eq}
\text{the sequence }j_1j_2\ldots j_m\text{ contains a subsequence $j_{k_1}j_{k_2}j_{k_3}j_{k_4}j_{k_5}=\text{`12312'}$}.
\end{equation}
The linear map defined by such a matrix on~$\mathbb R_{>0}^6$ is a compression
in the Hilbert projective metric with factor smaller than~$1$.

Since every letter from~$\{1,2,3\}$ appears in~$\slbf i$ infinitely often,
there exists a factor~$u=j_1j_2\ldots j_m$ of~$\slbf i$ satisfying~\eqref{12312-eq}.
Since~$\slbf i$ is recurrent, the factor~$u$ appears in~$\slbf i$ infinitely often,
which implies that the intersection~$\cap_{k=1}^\infty X_k$ is a single ray. The latter means
that the vector~$x$ is determined by the directing word~$\slbf i$ uniquely up to scale, which is equivalent to saying
that~$T_\lambda^{\mathrm{AR}}$ (equivalently, $\mathscr F_\lambda$) is uniquely ergodic.
\end{proof}

\begin{corollary}\label{main-coro}
For~$\mu_{\rg}$-almost all~$\lambda\in\rg$ the Arnoux--Rauzy IET~$T_\lambda^{\mathrm{AR}}$
is uniquely ergodic.
\end{corollary}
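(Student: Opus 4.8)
The plan is to combine Theorem~\ref{typically-uniquely-ergodic-th} with the ergodic properties of the natural measure recorded in Theorem~\ref{measure-properties-th}. Since $\mu_\rg(\rg\setminus\rg_{\mathrm{irr}})=0$, it suffices to show that $\mu_\rg$-almost every $\lambda\in\rg_{\mathrm{irr}}$ has a recurrent directing word, for then Theorem~\ref{typically-uniquely-ergodic-th} immediately yields unique ergodicity of $T_\lambda^{\mathrm{AR}}$. So the whole task reduces to a recurrence statement for the symbolic coding.

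First I would transfer the problem to symbolic dynamics. By Proposition~\ref{regular-rauzy-prop}(ii) the coding map $\chi\colon\rg_{\mathrm{irr}}\to\mathscr X$ is a homeomorphism, and from the definition~\eqref{mapF-eq} of~$F$ one reads off the conjugacy $\chi\circ F=\sigma\circ\chi$, where $\sigma$ is the one-sided shift. For a finite word $\slbf u$ in the alphabet $\{1,2,3\}$ let $C_{\slbf u}=\rg\cap f_{\slbf u}(\Delta\setminus\partial\Delta)$; this is an open subset of $\rg$, and it is nonempty because $f_{\slbf u}$ maps $\rg_{\mathrm{irr}}$ into $\rg_{\mathrm{irr}}$. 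For $\lambda\in\rg_{\mathrm{irr}}$ the condition $\lambda\in C_{\slbf u}$ is equivalent to $\slbf u$ being a prefix of the directing word $\chi(\lambda)$. By the full-support property Theorem~\ref{measure-properties-th}(iii) we therefore have $\mu_\rg(C_{\slbf u})>0$ for every such $\slbf u$.

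Next I would apply Birkhoff's ergodic theorem. By Theorem~\ref{measure-properties-th}(i) the measure $\mu_\rg$ is finite and by~(ii) the map $F$ is ergodic, so for each fixed $\slbf u$ and $\mu_\rg$-almost every $\lambda$ one has
$$\lim_{N\to\infty}\frac1N\sum_{n=0}^{N-1}\mathbf 1_{C_{\slbf u}}\bigl(F^n(\lambda)\bigr)=\frac{\mu_\rg(C_{\slbf u})}{\mu_\rg(\rg)}>0.$$
A positive time average forces the orbit $\{F^n(\lambda)\}_{n\geqslant0}$ to meet $C_{\slbf u}$ infinitely often, which via the conjugacy $\chi(F^n(\lambda))=\sigma^n(\chi(\lambda))$ means that $\slbf u$ occurs in $\chi(\lambda)$ infinitely often. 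Intersecting the resulting full-measure sets over the countable family of all finite words $\slbf u$ produces a set $G$ with $\mu_\rg(G)=\mu_\rg(\rg)$ such that, for every $\lambda\in G\cap\rg_{\mathrm{irr}}$, \emph{every} finite word occurs infinitely often in $\chi(\lambda)$. A fortiori every factor of $\chi(\lambda)$ occurs infinitely often, so the directing word of $\lambda$ is recurrent in the sense of Definition~\ref{recurrent-def}, and Theorem~\ref{typically-uniquely-ergodic-th} applies.

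I expect essentially no serious obstacle: the genuine analytic work was already carried out in Theorem~\ref{typically-uniquely-ergodic-th}, and the remaining argument is a routine Birkhoff-plus-countable-intersection step. The only point demanding care is the positivity $\mu_\rg(C_{\slbf u})>0$ for every admissible cylinder, which is exactly the content of the full-support statement Theorem~\ref{measure-properties-th}(iii); without it one could not rule out that some word is almost never seen, and the recurrence conclusion would fail.
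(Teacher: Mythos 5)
Your proof is correct, and its overall route coincides with the paper's: both reduce the corollary to showing that $\mu_{\rg}$-almost every point of~$\rg_{\mathrm{irr}}$ has a recurrent directing word, and then invoke Theorem~\ref{typically-uniquely-ergodic-th}, with Theorem~\ref{measure-properties-th} as the ergodic input. Where you genuinely differ is in how the recurrence statement is established. The paper's proof is a two-line category-style argument: non-recurrent words form a countable union of nowhere dense subsets of~$\mathscr X$, this union is $F$-invariant, hence it is null ``due to Theorem~\ref{measure-properties-th}''. Read literally, ``invariant $+$ meager $+$ ergodic with full support $\Rightarrow$ null'' is not a valid general principle (for the doubling map, the set of normal numbers is invariant, meager, and of full Lebesgue measure), so the paper's wording compresses an argument that still has to be supplied, and your Birkhoff-on-cylinders reasoning is exactly that missing content: full support (Theorem~\ref{measure-properties-th}(iii)) gives $\mu_{\rg}(C_{\slbf u})>0$ for every cylinder, Birkhoff plus ergodicity forces almost every $F$-orbit to visit~$C_{\slbf u}$ infinitely often, and a countable intersection yields the stronger conclusion that almost every directing word contains \emph{every} finite word infinitely often. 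The two identifications you rely on---that $C_{\slbf u}\cap\rg_{\mathrm{irr}}$ is the set of words with prefix~$\slbf u$, and that $\chi\circ F=\sigma\circ\chi$---are indeed available in the paper, inside the proof of Proposition~\ref{regular-rauzy-prop}(ii) and from the definition~\eqref{mapF-eq}. The only caveat, shared equally by the paper's own proof, is that Birkhoff's theorem (or any zero--one law) needs $\mu_{\rg}$ to be $F$-invariant and finite: finiteness is Theorem~\ref{measure-properties-th}(i), while invariance, though not stated explicitly in Theorem~\ref{measure-properties-th}, is implicit in the construction of~$\mu_{\rg}$ in~\cite{AHS2} and in calling~$F$ ergodic with respect to~$\mu_{\rg}$; it would be worth a sentence acknowledging this when applying the ergodic theorem.
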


\begin{proof}
Recall (see Proposition~\ref{regular-rauzy-prop}) that we identified~$\rg_{\mathrm{irr}}$
with a subspace~$\mathscr X\subset\{1,2,3\}^{\mathbb N}$. Non-recurrent
words form a countable union of nowhere dense subsets of~$\mathscr X$.
This union is invariant under the map~$F$ defined by~\eqref{mapF-eq}.
Therefore, it has zero measure~$\mu_{\rg}$ due to Theorem~\ref{measure-properties-th}.
The claim now follows from Theorem~\ref{typically-uniquely-ergodic-th}.
\end{proof}

One can see from the proof of Theorem~\ref{typically-uniquely-ergodic-th} that the hypothesis of the
theorem can be considerably weakened. Namely, for~$T_\lambda^{\mathrm{AR}}$
to be uniquely ergodic, it suffices that \emph{some} finite word~$j_1j_2,\ldots,j_m$
in the alphabet~$\{1,2,3\}$ such that the matrix~$N_{j_1}N_{j_2}\ldots N_{j_m}$ has only strictly positive entries,
appears as a factor of~$\chi(\lambda)$ infinitely often.

The following statement is a slight generalization of a similar result of~\cite{DS}. It shows
that without any additional hypothesis on~$\lambda\in\rg_{\mathrm{irr}}$ the
assertion of Theorem~\ref{typically-uniquely-ergodic-th} fails.

\begin{proposition}\label{non-uniquely-ergodic-prop}
Suppose that the directing word for~$\lambda\in\rg_{\mathrm{irr}}$ has the form
$$\slbf i=i_1^{k_1}i_2^{k_2}i_3^{k_3}\ldots,$$
where~$i_j\ne i_{j+1}$, $k_j\in\mathbb N$ for all~$j=1,2,3,\ldots$, and the series~$\sum_{j=1}^\infty1/k_j$ converges.
Then~$T_\lambda^{\mathrm{AR}}$ is not uniquely ergodic.
\end{proposition}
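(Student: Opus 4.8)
The plan is to reuse the cone/renormalization picture from the proof of Theorem~\ref{typically-uniquely-ergodic-th} and to show that, under the stated summability, the nested cones do \emph{not} collapse to a single ray, so that $\dim\mathscr M(\mathscr F_\lambda)\geqslant2$. Recall that an invariant measure corresponds to a ray in $\bigcap_{k}\overline{X_k}$, where $X_k$ is the image of the positive cone $\mathbb R^6_{>0}$ under $N_{i_1}N_{i_2}\cdots N_{i_k}$, that the $N_i$ have non-negative entries, and that unique ergodicity is equivalent to this intersection being one-dimensional. I would first record the elementary but crucial algebraic fact that each $N_i$ is unipotent: writing $E_i=N_i-I$ one checks directly that $E_i^2=0$ and $\operatorname{rank}E_i=2$, so that
\begin{equation*}
N_i^{\,k}=I+kE_i,\qquad i=1,2,3,
\end{equation*}
and the image plane $V_i=\operatorname{im}E_i$ is a coordinate $2$-plane, namely $V_1=\langle e_1,e_2\rangle$, $V_2=\langle e_3,e_4\rangle$, $V_3=\langle e_5,e_6\rangle$.

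Second, I would analyse the effect of a single block $N_{i_j}^{\,k_j}=I+k_jE_{i_j}$ on a cone that, after the previous blocks, sits within a $O(1/k_{j-1})$-neighbourhood of the plane $V_{i_{j-1}}$. For large $k_j$ the term $k_jE_{i_j}$ dominates, so the image lies in a $O(1/k_j)$-neighbourhood of $V_{i_j}$; and since the hypothesis $i_j\ne i_{j+1}$ forces $V_{i_{j-1}}\ne V_{i_j}$, the dominant part of the map, read in the coordinates of these two planes, is exactly the coordinate interchange $(p,q)\mapsto(q,p)$. As this interchange is a projective isometry of the Hilbert metric on $V_{i_j}\cap\mathbb R^6_{>0}$, the only loss comes from the lower-order terms, whose relative size is $O(1/k_{j-1})+O(1/k_j)$. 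The key point is that these correction terms depend linearly on the input, so the \emph{difference} of the corrections for two input rays is bounded by $C\bigl(1/k_{j-1}+1/k_j\bigr)$ times the Hilbert distance between them. Hence, if $D_j$ denotes the Hilbert diameter of the cone after $j$ blocks, one obtains a \emph{multiplicative} lower bound
\begin{equation*}
D_j\geqslant D_{j-1}\Bigl(1-C\bigl(\tfrac1{k_{j-1}}+\tfrac1{k_j}\bigr)\Bigr).
\end{equation*}

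Finally, the hypothesis $\sum_j 1/k_j<\infty$ makes the infinite product $\prod_j\bigl(1-C(1/k_{j-1}+1/k_j)\bigr)$ converge to a strictly positive number, so $\liminf_j D_j>0$. Thus $\bigcap_k\overline{X_k}$ contains two distinct rays, $\dim\mathscr M(\mathscr F_\lambda)\geqslant2$, and $T_\lambda^{\mathrm{AR}}$ is not uniquely ergodic, in the spirit of \cite{DS}. The step I expect to be the real obstacle is the uniform per-block estimate above: one must keep the cone in a bounded region of the Hilbert metric so that the constant $C$ does not depend on $j$, and in particular control how the transverse $O(1/k_{j-1})$ error produced by one block is reprocessed by the next. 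The coordinate-swap structure is exactly what prevents these errors from accumulating faster than $O(1/k_j)$ per step, and making this bookkeeping rigorous --- rather than the algebra of the $N_i^{\,k}$, which is completely explicit --- is where the weight of the argument lies.
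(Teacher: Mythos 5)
Your algebraic setup is correct and is exactly the paper's starting point: the unipotency $N_i^k=\mathbbm1_6+kE_i$ with $E_i=P_i\otimes B$ of rank two (this is~\eqref{Nik-eq}), the image planes $V_1=\langle e_1,e_2\rangle$, $V_2=\langle e_3,e_4\rangle$, $V_3=\langle e_5,e_6\rangle$, and the fact that $E_i$ carries the quadrant of $V_j$, $j\ne i$, onto the quadrant of $V_i$ by the coordinate swap. The genuine gap is in the metric scheme you build on top of this, and it is more than bookkeeping. First, the per-block inequality $D_j\geqslant D_{j-1}\bigl(1-C(1/k_{j-1}+1/k_j)\bigr)$ cannot be established in the metric you chose: the quadrants $V_i\cap\mathbb R^6_{\geqslant0}$ consist of vectors with four vanishing coordinates, i.e.\ they lie in the \emph{boundary} of the positive cone, and the cones $N_{i_j}^{k_j}(\mathbb R^6_{>0})$ hug these boundary faces ever more closely; the Hilbert metric of the ambient cone degenerates exactly there (the distance from any interior ray to the limiting quadrant is infinite), so ``being within $O(1/k_j)$ of $V_{i_j}$'' is not a Hilbert-metric statement, no uniform constant $C$ exists, and for nested cones there is no general lower bound for the diameter of the smaller in terms of the larger. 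Second, even granting $\liminf_jD_j>0$, your conclusion does not follow: nested images of the positive cone can have \emph{infinite} Hilbert diameter for every $j$ and still have closures intersecting in a single boundary ray --- in $\mathbb R^2$, take $X_j$ to be the image of $\mathbb R^2_{>0}$ under $(u,v)\mapsto(u+v,v/j)$, i.e.\ $\{0<jy<x\}$, whose closures intersect in the ray $\{y=0\}$. To rescue that step you would need the cones to stay eventually in a compact region of the projectivized open cone (e.g.\ via eventual strict positivity of the matrix products, as in condition~\eqref{12312-eq}, combined with the fact that, by minimality, every ray of $\bigcap_k\overline{X_k}$ is strictly positive); none of this appears in your sketch, and it is precisely where the argument would have to carry the weight.

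The paper avoids metric estimates altogether, and you can too. It forms the $6\times2$ matrices
$C(\ell,m)=\prod_{j=\ell}^{\ell+m}\bigl(P_{i_j}\otimes\mathbbm1_2+k_j^{-1}\,\mathbbm1_3\otimes B\bigr)\bigl((e_1+e_2+e_3)\otimes\mathbbm1_2\bigr)$,
uses the summability of $\sum_j1/k_j$ to get a limit $C(\ell,\infty)$ as $m\to\infty$, and notes that $C(\ell,\infty)-2e_{i_\ell}\otimes\mathbbm1_2\to0$ as $\ell\to\infty$, so $\operatorname{rank}C(\ell,\infty)=2$ for $\ell$ large. Since $C(1,\infty)$ is obtained from $C(\ell+1,\infty)$ by multiplying on the left by invertible non-negative matrices, it also has rank two, and its two columns are non-negative vectors lying in $\overline{X_k}$ for every $k$, hence two non-proportional invariant measures. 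This rank-plus-convergent-product argument is exactly the rigorous form of your ``swap is an isometry, errors are summable'' heuristic; I recommend replacing the Hilbert-diameter bookkeeping with it.
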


\begin{proof}
Denote by~$\mathbbm1_n$ the identity $n\times n$-matrix, and by~$e_1,e_2,e_3$ the standard basis of~$\mathbb R^3$.
We use the construction from the proof of Theorem~\ref{typically-uniquely-ergodic-th}.
The matrices~$N_i$ satisfy the following relations:
\begin{equation}\label{Nik-eq}
N_i^k=k(N_i-\mathbbm1_6)+\mathbbm1_6=k\,P_i\otimes B+\mathbbm1_6,
\text{ where }P_i=M_i-\mathbbm1_3,\ B=\begin{pmatrix}0&1\\1&0\end{pmatrix},
\end{equation}
and~`$\otimes$' stands for the Kronecker product. We have
$$P_ie_j=\left\{\begin{aligned}&0&&\text{if }i=j,\\
&e_i&&\text{otherwise,}
\end{aligned}\right.$$
which implies the following:
\begin{equation}\label{PPP-eq}
P_{j_1}P_{j_2}\ldots P_{j_m}(e_1+e_2+e_3)=\left\{\begin{aligned}&2e_{j_1}&&\text{if }j_i\ne j_{i+1}\text{ for all }
i=1,\ldots,m-1,\\
&0&&\text{otherwise.}\end{aligned}\right.
\end{equation}

Denote by~$C(\ell,m)$, $\ell,m\in N$, the following $6\times2$-matrix:
$$C(\ell,m)=\frac{N_{i_\ell}^{k_\ell}}{k_\ell}\frac{N_{i_{\ell+1}}^{k_{\ell+1}}}{k_{\ell+1}}\ldots
\frac{N_{i_{\ell+m}}^{k_{\ell+m}}}{k_{\ell+m}}\bigl((e_1+e_2+e_3)\otimes B^{m+1}\bigr).
$$
From~\eqref{Nik-eq} we have:
$$C(\ell,m)=\prod_{j=\ell}^{\ell+m}\Bigl(P_{i_j}\otimes\mathbbm1_2+\frac{\mathbbm1_3\otimes B}{k_j}\Bigr)\bigl((e_1+e_2+e_3)\otimes\mathbbm1_2\bigr).$$

Since the series~$\sum_{j=1}^\infty1/k_j$ converges, one can see from~\eqref{PPP-eq}
that, for any~$\ell\in\mathbb N$, there exists a limit
$\lim_{m\rightarrow\infty}C(\ell,m)$,
which we denote by~$C(\ell,\infty)$. One can also see that
the difference
$$C(\ell,\infty)-2e_{i_\ell}\otimes\mathbbm1_2$$
tends to zero as~$\ell$ goes to infinity, so, the rank of~$C(\ell,\infty)$ equals two if~$\ell$ is large enough.
Therefore, $C(1,\infty)$ also has rank two because
$$C(1,\infty)=\frac{N_{i_1}^{k_1}}{k_1}\frac{N_{i_2}^{k_2}}{k_2}\ldots
\frac{N_{i_\ell}^{k_\ell}}{k_\ell}\,(\mathbbm1_3\otimes B^\ell)\,C(\ell+1,\infty),$$
and the matrices~$N_i$ are non-degenerate.
The two columns of~$C(1,\infty)$ represent two non-proportional
(and actually ergodic) invariant measures of~$T_\lambda^{\mathrm{AR}}$.
\end{proof}

Arnoux--Rauzy IETs have a mixing behavior which is
unusual for general IET. Namely, the following statement holds.

\begin{theorem}\label{weakmixing}
Arnoux--Rauzy IETs are almost never weakly mixing, with respect to the measure $\mu_{\rg}$. 
\end{theorem}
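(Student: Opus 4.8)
The plan is to deduce non-weak-mixing of the IET from the spectral type of its symbolic factor. Recall from Section~\ref{words-seq} that for $\lambda\in\rg_{\mathrm{irr}}$ the natural coding of $T^{\mathrm{AR}}_\lambda$ by its six intervals of continuity produces an Arnoux--Rauzy word, whose subshift $(\Omega_\lambda,\sigma)$ is a \emph{topological factor} of $(\mathbb S^1,T^{\mathrm{AR}}_\lambda)$; write $\pi_\lambda$ for the factor map. Eigenfunctions pull back along factor maps: if $g\circ\sigma=e^{2\pi i t}g$ then $(g\circ\pi_\lambda)\circ T^{\mathrm{AR}}_\lambda=e^{2\pi i t}(g\circ\pi_\lambda)$, so every nontrivial measurable eigenvalue of $(\Omega_\lambda,\sigma)$ is a nontrivial eigenvalue of $T^{\mathrm{AR}}_\lambda$. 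Hence it suffices to show that, for $\mu_{\rg}$-almost every $\lambda$, the subshift $(\Omega_\lambda,\sigma)$ is \emph{not} weakly mixing. This is consistent with the fact that the IET is a proper extension of the word (as noted after Theorem~\ref{uniqueergodicity}) and with the existence of exotic weakly mixing or non-uniquely-ergodic parameters, e.g.\ those of Proposition~\ref{non-uniquely-ergodic-prop}: those form a $\mu_{\rg}$-null set.

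To control $(\Omega_\lambda,\sigma)$ I would use its $S$-adic/renormalization presentation, in which the sequence of substitutions is read off the directing word $\chi(\lambda)=i_1i_2i_3\dots$ of Proposition~\ref{regular-rauzy-prop} and the incidence matrices are exactly the $M_{i_k}$ (or their transposes). The main input is then a coding theorem for such systems in the spirit of~\cite{BSTh}: under a Pisot-type hypothesis on the cocycle together with a mild regularity (balance/finiteness) condition, $(\Omega_\lambda,\sigma)$ is measurably isomorphic to a minimal translation of the two-torus, hence has purely discrete spectrum and, a fortiori, a nontrivial eigenvalue. The Pisot-type hypothesis is the negativity of the second Lyapunov exponent of the cocycle generated by the $M_i$ relative to the shift-invariant ergodic measure $\chi_*\mu_{\rg}$ on $\mathscr X$.

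Concretely the steps are: (1)~record $\pi_\lambda$ and the $S$-adic presentation of $\Omega_\lambda$; (2)~show that the cocycle $(M_i)$ has a simple dominant exponent $\theta_1>0$ with all remaining exponents strictly negative for $\mu_{\rg}$-a.e.\ $\lambda$, using integrability of the roof function $r$ (as in~\cite{AGY}) and the Oseledets theorem together with the ergodicity of $F$ and finiteness of $\mu_{\rg}$ from Theorem~\ref{measure-properties-th}; (3)~verify the regularity condition of the coding theorem on an $F$-invariant full-measure set, which Theorem~\ref{measure-properties-th} promotes to a $\mu_{\rg}$-a.e.\ statement; (4)~conclude that $\Omega_\lambda$ has a nontrivial eigenvalue and therefore that $T^{\mathrm{AR}}_\lambda$ is not weakly mixing. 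Note that for Theorem~\ref{weakmixing} we only need \emph{some} nontrivial eigenvalue, not full discrete spectrum, so step~(3) may be correspondingly relaxed.

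The hard part is the conjunction of steps~(2) and~(3). Step~(2), the a.e.\ strict negativity of $\theta_2$, is genuinely stronger than the simplicity of the top exponent underlying unique ergodicity (cf.\ the contraction of the positive cone under $N_{j_1}\cdots N_{j_m}$ in the proof of Theorem~\ref{typically-uniquely-ergodic-th}); one must certify it for $\mu_{\rg}$-a.e.\ point, for instance by the quantitative estimates coming from the explicit positive matrices exhibited there, checking it directly at a reference cylinder and propagating by the ergodic theorem. Step~(3) is the delicate geometric coincidence/finiteness property that makes the associated Rauzy-fractal pieces tile, which is exactly the subtle ingredient of~\cite{BSTh}-type arguments; the advantage here is Theorem~\ref{equivalence}, which lets one verify it in whichever equivalent model is most convenient. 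An alternative, more hands-on route avoiding the coding theorem is to apply Veech's eigenvalue criterion directly to $T^{\mathrm{AR}}_\lambda$: build the eigenfunction along the Rokhlin towers of the induction, reducing non-weak-mixing to the $\mu_{\rg}$-a.e.\ convergence of $\sum_k \lVert t\,h^{(k)}\rVert^2$ for some $t\neq0$, where $h^{(k)}$ are the integer tower heights evolving by the transpose cocycle; the summability is then secured precisely by the negative second exponent of step~(2), which makes the relevant deviations from the integer lattice decay geometrically.
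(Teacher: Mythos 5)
Your proposal follows the same backbone as the paper's proof: the essential input in both cases is the Berth\'e--Steiner--Thuswaldner result (\cite{BSTh}, Theorem~5 and Corollary~6) that, granted the Pisot property of the Lyapunov spectrum of the Arnoux--Rauzy cocycle, almost every Arnoux--Rauzy word has a nontrivial eigenvalue (in fact pure discrete spectrum). Where you genuinely differ is in how non-weak-mixing is transported from the word to the IET. The paper invokes Theorem~1 of \cite{ACFH}, which gives the precise correspondence between Arnoux--Rauzy IETs and their symbolic models (an isomorphism under a Diophantine condition). You instead use only the fact that the subshift $(\Omega(\slbf u(\lambda,x)),\shift)$ is a measure-theoretic factor of $(\mathbb S^1,T^{\mathrm{AR}}_\lambda,\mathrm{Leb})$: the coding map intertwines the dynamics and, by unique ergodicity of the subshift (Theorem~\ref{ar-words-th}(iv)), carries Lebesgue measure to the unique shift-invariant measure, so any nontrivial eigenfunction pulls back to a nontrivial eigenfunction of the IET. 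Since non-weak-mixing requires exhibiting just one eigenvalue, this transfer is elementary and unconditional, and it is a real simplification of this step relative to the paper (the full strength of \cite{ACFH} is needed for other statements in the paper, such as the strong form of unique ergodicity, but not here). One slip worth correcting: the Arnoux--Rauzy word is the coding by the \emph{three} intervals $[0,\lambda_1)$, $[\lambda_1,\lambda_1+\lambda_2)$, $[\lambda_1+\lambda_2,1)$, not by the six intervals of continuity; the factor argument is unaffected.

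One caution about your step~(2). The strict negativity of the second Lyapunov exponent of the cocycle generated by the $M_i$ over $(\mathscr X,\chi_*\mu_{\rg})$ does \emph{not} follow from the positivity of products such as $N_1N_2N_3N_1N_2$ together with the ergodic theorem: cone contraction yields simplicity of the \emph{top} exponent (this is exactly the unique-ergodicity mechanism of Theorem~\ref{typically-uniquely-ergodic-th}) but gives no control whatsoever on the sign of the second exponent. The Pisot property is a genuinely deeper fact about the Arnoux--Rauzy matrix monoid and must be quoted (it is part of the package on which your coding theorem ``in the spirit of \cite{BSTh}'' rests), not rederived by a reference-cylinder computation; the same applies to the coincidence/regularity condition in your step~(3). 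With those two items treated as citations---which is precisely the paper's own level of rigor, since its proof consists of citing \cite{BSTh} and \cite{ACFH}---your argument is complete.
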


This statement follows directly from the series of results in \cite{BSTh} and \cite{ACFH}. Namely, Theorem 5 and Corollary 6 in \cite{BSTh} concern spectral properties of Arnoux--Rauzy words: it states that the Pisot property of the Lyapunov spectrum of Arnoux--Rauzy words implies that the corresponding dynamical system is almost never weakly mixing. Theorem 1 in \cite{ACFH} describes the precise connection between Arnoux--Rauzy words and Arnoux--Rauzy IETs. 

\begin{remark}
Proposition 21 in \cite{ACFH} implies that the statement of Theorem \ref{weakmixing} can not be strengthen
by removing~`almost': there exist explicit examples of weakly mixing Arnoux--Rauzy IETs. 
\end{remark}

\begin{remark}
The connection between two exceptional sets---the set of non-uniquely ergodic Arnoux--Rauzy IETs and the set of weakly mixing Arnoux--Rauzy trans\-for\-mation---is not well understood yet; it seems to be an interesting open problem. 
\end{remark}

P.\,Arnoux noticed in \cite{A} that the Arnoux--Yoccoz IET,
which is a particular case of a Arnoux--Rauzy IET,
is
semiconjugate to a translation on a 2-torus. Corollary~2 in \cite{ACFH} states that the property holds for almost all Arnoux--Rauzy IETs. 

\section{Arnoux--Rauzy words}\label{words-seq}
Arnoux--Rauzy words are defined in~\cite{AR} as a generalization of Sturmian words. They are intimately
related with the IETs~$T^{\mathrm{AR}}_\lambda$ for~$\lambda\in\rg_{\mathrm{irr}}$.
This relation is described below.

\begin{definition}
Let
$\slbf u=u_0u_1u_2u_3\ldots$
be an infinite word in some alphabet~$A$.
A factor~$w$ of~$\slbf u$ is called \emph{right special} (respectively, \emph{left special})
if there are two letters~$a,b\in A$, $a\ne b$, such that
both words~$wa$ and~$wb$ (respectively, $aw$ and~$bw$)
are factors of~$\slbf u$.
\end{definition}

\begin{definition}
\emph{An Arnoux--Rauzy word} is an infinite word~$u$
in the alphabet~$\{1,2,3\}$ such that the following holds:
\begin{enumerate}
\item
the word~$\slbf u$ is \emph{recurrent} (see Definition~\ref{recurrent-def});
\item
for any~$n\in\mathbb N$, the number of pairwise distinct factors of~$\slbf u$ of length~$n$
is equal to~$2n+1$;
\item
for any~$n\in\mathbb N$, there is exactly one right special factor
and exactly one left special factor of~$\slbf u$ of length~$n$.
\end{enumerate}
\end{definition}

With each pair~$(\lambda,x)$, where~$\lambda\in\Delta$ and~$z\in\mathbb S^1=\mathbb R/\mathbb Z$,
we associate an infinite word~$\slbf u(\lambda,x)$ in the alphabet~$\{1,2,3\}$ defined
as follows:
$$u_i(\lambda,x)=\left\{
\begin{aligned}
1,&\text{ if }\bigl(T^{\mathrm{AR}}_\lambda\bigr)^i(x)\in[0,\lambda_1);\\
2,&\text{ if }\bigl(T^{\mathrm{AR}}_\lambda\bigr)^i(x)\in[\lambda_1,\lambda_1+\lambda_2);\\
3,&\text{ if }\bigl(T^{\mathrm{AR}}_\lambda\bigr)^i(x)\in[\lambda_1+\lambda_2,1).
\end{aligned}
\right.$$
Similarly, we define a word~$\widetilde{\slbf u}(\lambda,x)$ in the alphabet~$\{1,2,3\}$ by
$$\widetilde u_i(\lambda,x)=\left\{
\begin{aligned}
1,&\text{ if }\bigl(\widetilde T^{\mathrm{AR}}_\lambda\bigr)^i(x)\in(0,\lambda_1];\\
2,&\text{ if }\bigl(\widetilde T^{\mathrm{AR}}_\lambda\bigr)^i(x)\in(\lambda_1,\lambda_1+\lambda_2];\\
3,&\text{ if }\bigl(\widetilde T^{\mathrm{AR}}_\lambda\bigr)^i(x)\in(\lambda_1+\lambda_2,1].
\end{aligned}
\right.$$
One can see that if the~$T^{\mathrm{AR}}_\lambda$-orbit of~$x$ is regular, then~$\slbf u(\lambda,x)=\widetilde{\slbf u}(\lambda,x)$.

With every infinite word~$\slbf u$ in an alphabet~$\mathscr A$ one associates a symbolic dynamical system~$(\Omega(\slbf u),\shift)$ in which~$\Omega(\slbf u)$
is the set of all infinite words~$\slbf w$ such that any factor of~$\slbf w$ is also a factor~$\slbf u$, and~$\shift:\Omega(\slbf u)\rightarrow
\Omega(\slbf u)$ is the shift operator:
$$\shift(u_1u_2u_3u_4\ldots)=u_2u_3u_4\ldots.$$
The set~$\Omega(\slbf u)$ is endowed with the topology induced from the product topology on~$\mathscr A^{\mathbb N}$.

\begin{theorem}\label{ar-words-th}
\emph{(i)} The
word~$\slbf u(\lambda,x)$ {\rm(}or~$\widetilde{\slbf u}(\lambda,x)${\rm)} is Arnoux--Rauzy if and only if~$\lambda\in\rg_{\mathrm{irr}}$.

\emph{(ii)} Every Arnoux--Rauzy word~$\slbf u$ has the form~$\slbf u(\lambda,x)$ or~$\widetilde{\slbf u}(\lambda,x)$ for some~$\lambda\in\rg_{\mathrm{irr}}$,
$x\in\mathbb S^1$. Moreover~$\lambda_1$, $\lambda_2$, and~$\lambda_3$ are the frequencies of the respective
letters in~$\slbf u$.

\emph{(iii)}
If~$\lambda\in\rg_{\mathrm{irr}}$, then~$\Omega(\slbf u(\lambda,x))=\Omega(\widetilde{\slbf u}(\lambda,x))=\Omega(\slbf u(\lambda,y))$
for any~$x,y\in\mathbb S^1$.

\emph{(iv)}
If~$\lambda\in\rg_{\mathrm{irr}}$, then the system~$(\Omega(\slbf u(\lambda,x)),\shift)$ is minimal
and uniquely ergodic.
\end{theorem}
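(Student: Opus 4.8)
The plan is to follow Arnoux--Rauzy and base everything on the renormalization $\lambda\mapsto f_i^{-1}(\lambda)$ already used in Propositions~\ref{regular-rauzy-prop} and~\ref{AR-minimal-prop}, encoded at the level of codings by the three substitutions $\sigma_i$ whose incidence matrices are the $M_i$ (so that $\sigma_1\colon 1\mapsto1,\ 2\mapsto 12,\ 3\mapsto 13$, and cyclically). Write $\mathcal P=\{[0,\lambda_1),[\lambda_1,\lambda_1+\lambda_2),[\lambda_1+\lambda_2,1)\}$ for the coding partition and $I_w$ for the cylinder of points whose coding begins with a finite word~$w$. For the \emph{if} part of~(i): by Proposition~\ref{AR-minimal-prop}, $T^{\mathrm{AR}}_\lambda$ is minimal for $\lambda\in\rg_{\mathrm{irr}}$, so every orbit is dense; this gives recurrence of $\slbf u(\lambda,x)$ and makes its set of length-$n$ factors equal to the set of nonempty atoms of $\bigvee_{j=0}^{n-1}(T^{\mathrm{AR}}_\lambda)^{-j}\mathcal P$, independent of~$x$. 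I would then establish the renormalization identity $\slbf u(\lambda,x)=\sigma_i\bigl(\slbf u(f_i^{-1}(\lambda),x')\bigr)$, reading $\sigma_i$ off the first-return map built in the proof of Proposition~\ref{AR-minimal-prop}, and iterate it along the directing word to present $\slbf u(\lambda,x)$ as the $S$-adic limit $\lim_k\sigma_{i_1}\cdots\sigma_{i_k}$, where every letter occurs infinitely often in $\slbf i$ by Proposition~\ref{regular-rauzy-prop}. The complexity $p(n)=2n+1$ and the uniqueness of the left- and right-special factor of each length (each extendable by all three letters) are obtained by tracking special and bispecial factors under the~$\sigma_i$.

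The \emph{only if} part of~(i) is the contrapositive via the trichotomy of Proposition~\ref{AR-minimal-prop}: rational $\lambda$ gives an eventually periodic word of bounded complexity; a vanishing coordinate or $\lambda\in\partial\Delta$ leaves one interval of $\mathcal P$ empty, producing an essentially two-letter (Sturmian, $p(n)=n+1$) word; and any other $\lambda\notin\rg_{\mathrm{irr}}$ yields a non-minimal map whose codings are not uniformly recurrent or have the wrong complexity. In each case one of the three defining properties of an Arnoux--Rauzy word fails. For~(ii) I would run the combinatorial ``desubstitution'': the defining properties force, at each step, a unique $\sigma_{i_1}$ with $\slbf u=\sigma_{i_1}(\slbf u^{(1)})$ and $\slbf u^{(1)}$ again Arnoux--Rauzy, producing a directing sequence $\slbf i$ in which every letter occurs infinitely often. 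By Proposition~\ref{regular-rauzy-prop}(ii) this $\slbf i$ is $\chi(\lambda)$ for a unique $\lambda\in\rg_{\mathrm{irr}}$, and matching the $S$-adic expansions gives $\slbf u=\slbf u(\lambda,x)$ or $\widetilde{\slbf u}(\lambda,x)$, the two choices corresponding to the one-sided readings at a singular point. The letter frequencies equal $\bigcap_k M_{i_1}\cdots M_{i_k}(\mathbb R^3_{\geqslant0})$, the single ray through $\lambda$ by the Hilbert-metric contraction of Proposition~\ref{regular-rauzy-prop}(ii); hence they equal~$\lambda$.

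I would prove~(iv) before~(iii). Primitivity of the $S$-adic system---guaranteed because every letter occurs infinitely often in $\slbf i$, so some block $\sigma_{j_1}\cdots\sigma_{j_m}$ has a strictly positive incidence matrix---gives uniform recurrence of $\slbf u(\lambda,x)$, equivalently minimality of $(\Omega(\slbf u(\lambda,x)),\shift)$. Unique ergodicity follows because the invariant measures of the subshift are exactly the normalized factor-frequency vectors, and these are pinned to the single ray $\bigcap_k M_{i_1}\cdots M_{i_k}(\mathbb R^3_{\geqslant0})$ by the same contraction. It is worth emphasizing here the point underlying the whole paper: this frequency cocycle is the $3\times3$ cocycle $M_i$, which always contracts to a ray, whereas the invariant measures of the IET itself are governed by the larger $6\times6$ cocycle $N_i$ of Theorem~\ref{typically-uniquely-ergodic-th}, which need not---this is exactly why the factor subshift is unconditionally uniquely ergodic while $T^{\mathrm{AR}}_\lambda$ may fail to be (Proposition~\ref{non-uniquely-ergodic-prop}). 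Part~(iii) is then immediate: minimality makes $\Omega(\slbf w)$ depend only on the language of $\slbf w$, and the language of $\slbf u(\lambda,x)$ is $\{w:I_w\ne\varnothing\}$, which is independent of $x$ and unchanged on passing to $\widetilde T^{\mathrm{AR}}_\lambda$, since the two maps differ only on the singular, language-irrelevant orbits.

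I expect the main obstacle to be the complexity count in~(i). Because $T^{\mathrm{AR}}_\lambda$ is a six-interval exchange while the coding uses only the coarse three-interval partition $\mathcal P$, one must verify that the three extra (midpoint) discontinuities create no additional special factors. This is awkward to do by a direct atom count of the refined partitions and is best controlled through the substitutive renormalization, where it reduces to checking the elementary action of each $\sigma_i$ on the unique special factors; this is precisely the step where the hypothesis $\lambda\in\rg_{\mathrm{irr}}$ (absence of saddle connections, from Proposition~\ref{AR-minimal-prop}) is used, as it keeps the orbits of the three partition points distinct and infinite.
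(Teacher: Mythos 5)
The paper itself contains no proof of Theorem~\ref{ar-words-th}: it is stated with the single line that it follows from the results of \cite{AR}, \cite{AS}, \cite{B} and Proposition~\ref{regular-rauzy-prop}. Your sketch is, in substance, a reconstruction of the arguments in those references: the renormalization of~$T^{\mathrm{AR}}_\lambda$ realized by the substitutions~$\sigma_i$ with incidence matrices~$M_i$, iterated along the directing word, is exactly the mechanism of \cite{AR}, and your desubstitution argument for~(ii), combined with Proposition~\ref{regular-rauzy-prop}(ii), is how \cite{AR} recovers~$\lambda$ and the directing sequence from the word. The one place where you genuinely diverge from the paper's intended route is~(iv): the paper gets unique ergodicity of the subshift from Boshernitzan's theorem \cite{B} on minimal symbolic systems of linear block growth (here $p(n)=2n+1$, so $\limsup_n p(n)/n=2<3$ suffices), as stated in the introduction and in the remark following Theorem~\ref{s-lambda-th}; you instead derive it from primitivity of the $S$-adic expansion and the Hilbert-metric contraction of the $3\times3$ cocycle~$(M_i)$, recycling the contraction lemma from the proof of Proposition~\ref{regular-rauzy-prop}. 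Both are valid: Boshernitzan's route is softer (a complexity bound alone suffices, no recognizability needed), while yours is self-contained relative to the paper and makes transparent the structural point the paper emphasizes elsewhere --- the subshift's invariant measures are governed by the $3\times3$ cocycle, which always contracts to a ray, whereas the IET's are governed by the $6\times6$ cocycle~$N_i$, which need not. Three details to nail down in a full write-up: the identity $\slbf u(\lambda,x)=\sigma_i\bigl(\slbf u(f_i^{-1}(\lambda),x')\bigr)$ holds verbatim only when~$x$ lies in the inducing interval (otherwise only up to a shift); identifying invariant measures of the subshift with rays in $\bigcap_k M_{i_1}\cdots M_{i_k}\bigl(\mathbb R^3_{\geqslant0}\bigr)$ requires unique desubstitution (recognizability), which for aperiodic words is what your step~(ii) supplies; and the complexity count in~(i) --- checking that the three midpoint discontinuities create no extra special factors --- is precisely the delicate point analyzed in \cite{ACFH}, so you are right to single it out as the main obstacle rather than treat it as routine.
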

This theorem follows from the results of~\cite{AR,AS,B} and Proposition~\ref{regular-rauzy-prop}.

\section {Foliations on $\mathbb R P^2$}\label{foliation-sec}
For each~$\lambda\in\Delta$, we define a measured foliation with singularities, which we denote by~$\mathscr F_\lambda^0$,
on the projective plane~$\mathbb RP^2$, as shown in Figure~\ref{foli-fig} (after cutting along a closed transversal).
Opposite points of the boundary
of the shown disc should be identified to obtain the original surface. The transverse measure of the boundary of the shown disc
is~$\lambda_1+\lambda_2+\lambda_3=1$.
\begin{figure}[ht]
\includegraphics{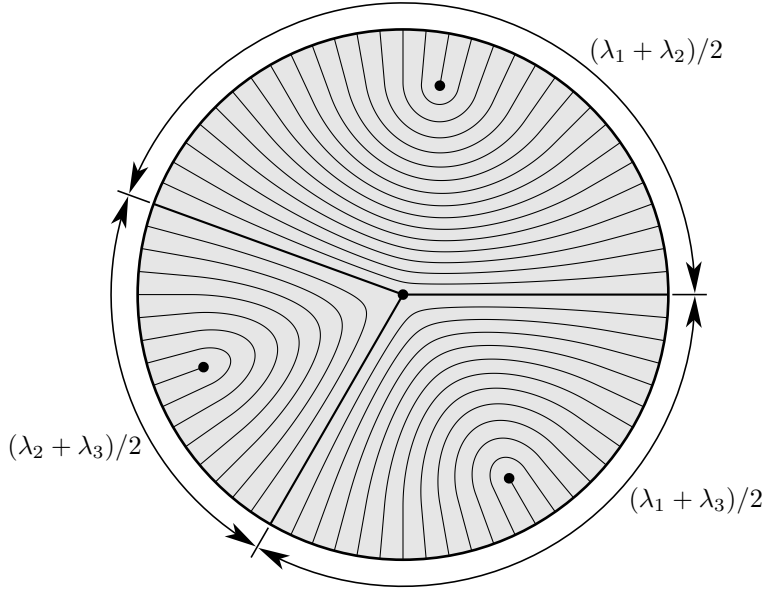}\put(-50,210){$(\lambda_1+\lambda_2)/2$}\put(-270,60){$(\lambda_2+\lambda_3)/2$}\put(-35,40){$(\lambda_1+\lambda_3)/2$}
\caption{The foliation~$\mathscr F_\lambda^0$}\label{foli-fig}
\end{figure}

\begin{remark}
The foliation~$\mathscr F_\lambda^0$ is designed to have a single $3$-prong singularity and three $1$-prong singularities.
Any measured foliation on~$\mathbb RP^2$ with such number and types of
singularities is isomorphic, after a measure rescaling, to~$\mathscr F_\lambda^0$
for some~$\lambda\in\Delta$, since one can always find a closed transversal intersecting all separatrices and
representing the non-trivial element of~$\pi_1(\mathbb RP^2)$. By cutting along such a transversal
one obtains a foliation on a $2$-disc of the form shown in Figure~\ref{foli-fig}.
\end{remark}

This foliation generalizes the one constructed in~\cite{AY}
for the purpose of providing an example of a stable foliation
of a pseudo-Anosov homeomorphism of~$\mathbb RP^2$ whose parameters are not
quadratic irrationals.

\begin{figure}[ht]
\includegraphics[scale=.7]{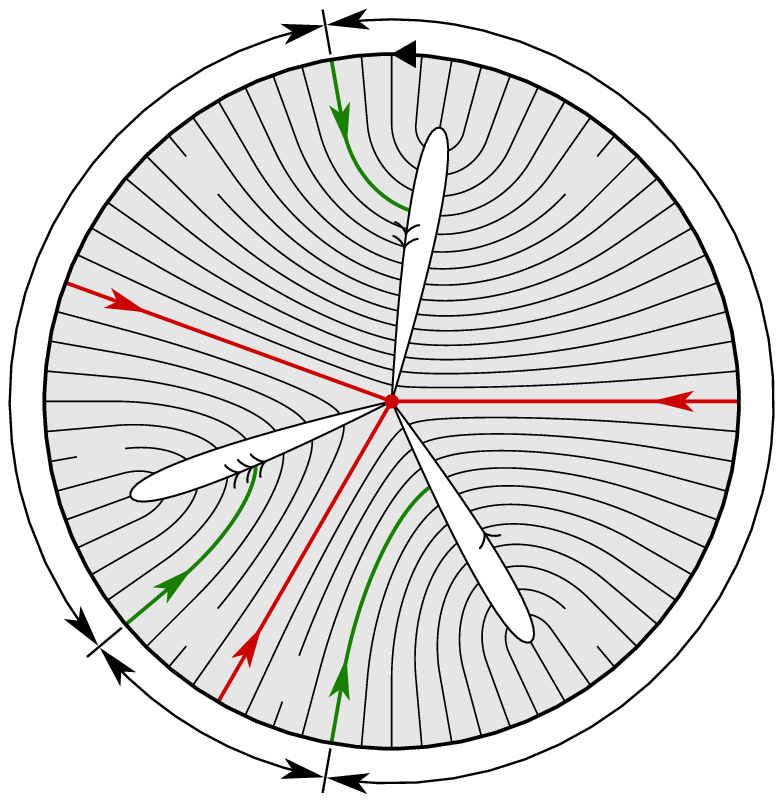}\put(-6,90){$\lambda_1$}\put(-165,120){$\lambda_2$}
\put(-130,10){$\lambda_3$}\put(-48,36){$1$}\put(-48.5,128){$2$}\put(-125,127){$3$}\put(-145,71){$4$}
\put(-125,36){$5$}\put(-107,25){$6$}\put(-161,80){$\gamma$}
\includegraphics[scale=.7]{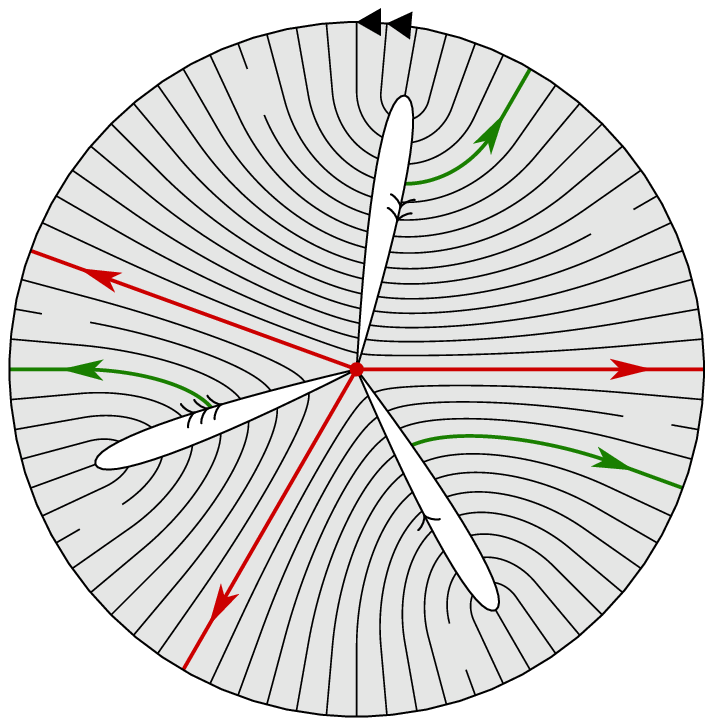}\put(-66,25){$1$}\put(-28,71){$6$}\put(-34.5,111){$3$}
\put(-107,137){$2$}\put(-145,91){$5$}\put(-138,52){$4$}

\includegraphics[scale=.7]{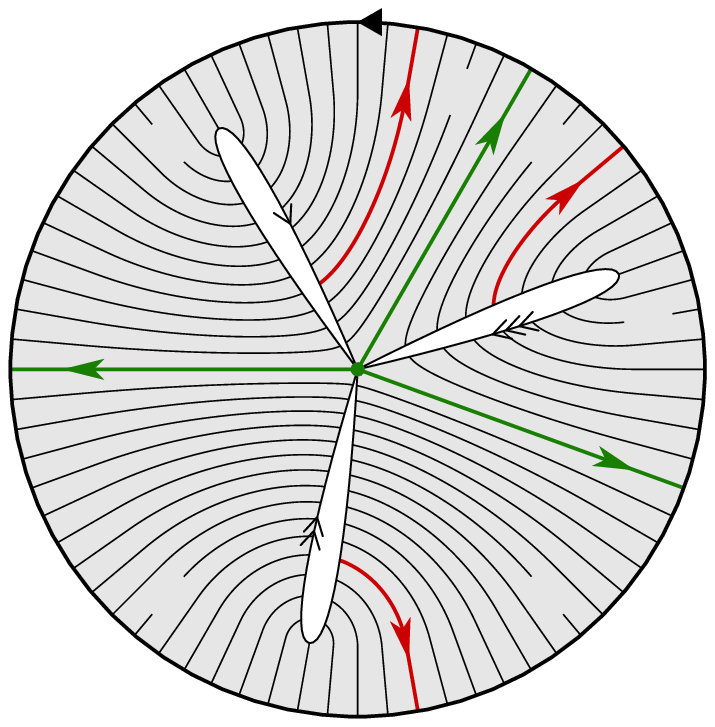}\put(-162,80){$\gamma$}\put(-125,36){$1$}\put(-49,36){$4$}
\put(-28,91.5){$3$}\put(-48.5,127){$6$}\put(-67,137.5){$5$}\put(-125,127){$2$}
\includegraphics[scale=.7]{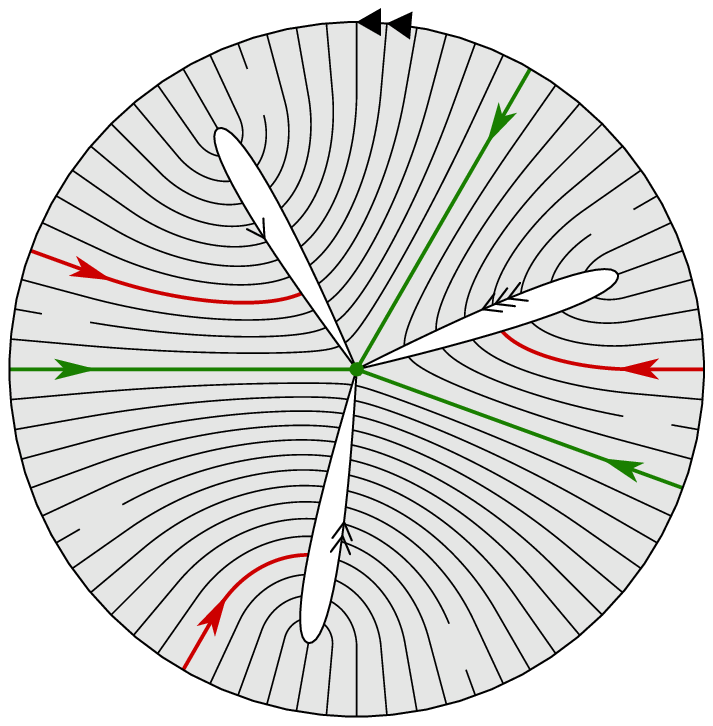}\put(-66,25){$1$}\put(-28,71){$6$}\put(-34.5,111){$3$}
\put(-107,137){$2$}\put(-145,91){$5$}\put(-138,52){$4$}

\caption{The surface~$\Sigma$ with the foliation~$\mathscr F_\lambda$}\label{4-fold-cover-fig}
\end{figure}

\begin{figure}[ht]
\includegraphics[scale=.5]{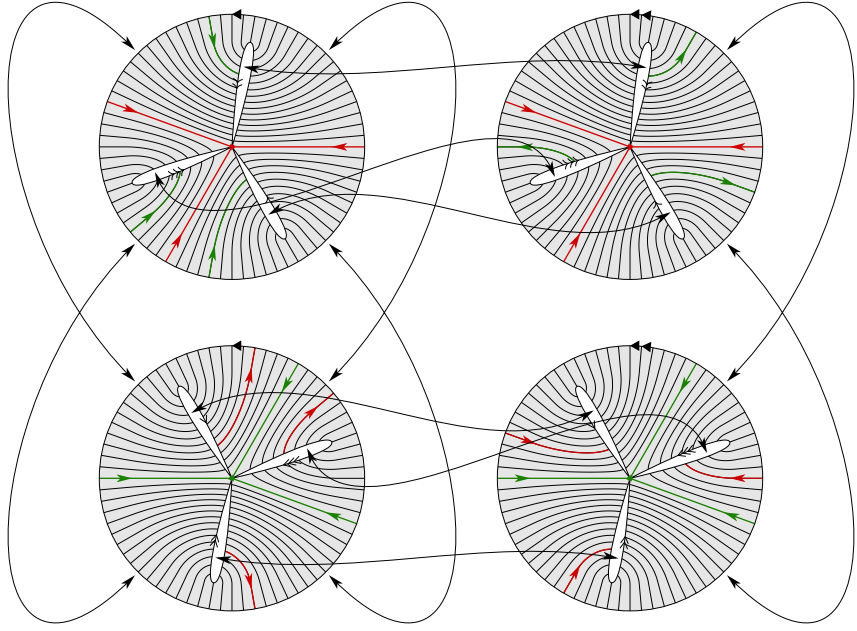}
\caption{Identifications of the boundaries in the construction of~$\Sigma_\lambda$}\label{ident-fig}
\end{figure}

Let~$\pi:\Sigma_\lambda\rightarrow\mathbb RP^2$ be a branched covering map with the smallest possible
genus of~$\Sigma_\lambda$, such that~$\Sigma_\lambda$ is an orientable surface, and the pullback~$\mathscr F_\lambda$ of~$\mathscr F_\lambda^0$
on~$\Sigma_\lambda$ is an orientable singular foliation. One can see that~$\pi$ must be four-fold (outside of the branching points
it is a regular ($\mathbb Z_2\times\mathbb Z_2$)-cover) and should have
eight simple branching points, which are preimages of the singularities of~$\mathscr F_\lambda^0$.
Six of these branching points, those that correspond to the $1$-prong singularities of~$\mathscr F_\lambda^0$,
are regular points of~$\mathscr F_\lambda$, and the other two are double (monkey) saddles of~$\mathscr F_\lambda$.
The genus of the surface~$\Sigma_\lambda$ is, therefore, equal to~$3$.

The surface~$\Sigma_\lambda$ with the foliation~$\mathscr F_\lambda$ cut into four identical pieces
is shown in Figure~\ref{4-fold-cover-fig}. The outer boundaries of the left
two pieces are identified with one another
forming a closed transversal, which is denoted by~$\gamma$.
The other boundaries are identified whenever they are indicated with the same
shape arrowheads in Figure~\ref{4-fold-cover-fig}. The identifications
are marked more explicitly in Figure~\ref{ident-fig}.

Shown in green and red colors in Figures~\ref{4-fold-cover-fig} and~\ref{ident-fig} are the segments
of the separatrices between the respective singularity and their first intersections
with~$\gamma$. These segments cut the surface into six bands
marked with numbers~$1,2,\ldots,6$ in Figure~\ref{4-fold-cover-fig}.

Recall that in the proof of Proposition~\ref{AR-minimal-prop}, for any~$\lambda\in\Delta$, we introduced a surface with
a measures singular foliation~$(\Sigma_\lambda^{\mathrm{AR}},\mathscr F_\lambda^{\mathrm{AR}})$
such that the Poincar\'e map induced by the foliation on a certain closed transversal is equivalent to~$T_\lambda^{\mathrm{AR}}$.

\begin{proposition}\label{ar=rp2-prop}
For any~$\lambda\in\Delta\setminus\partial\Delta$,
there is an isomorphism of surfaces endowed with a singular foliation~$(\Sigma^{\mathrm{AR}}_\lambda,\mathscr F^{\mathrm{AR}}_\lambda)\rightarrow
(\Sigma_\lambda,\mathscr F_\lambda)$.
\end{proposition}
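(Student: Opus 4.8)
\textbf{Proof plan for Proposition~\ref{ar=rp2-prop}.}

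The plan is to produce the isomorphism by matching the two surfaces along a common closed transversal and showing the induced Poincar\'e maps coincide. Both $(\Sigma_\lambda^{\mathrm{AR}},\mathscr F_\lambda^{\mathrm{AR}})$ and $(\Sigma_\lambda,\mathscr F_\lambda)$ are orientable genus-three surfaces carrying an orientable singular foliation with exactly two monkey (double) saddles, and each comes with a distinguished closed transversal: the curve $\gamma_\lambda=\mathbb S^1\times\{1/2\}$ on the mapping-torus side, and the curve $\gamma$ obtained by gluing the two outer boundaries in Figure~\ref{4-fold-cover-fig} on the $\mathbb RP^2$-cover side. First I would fix the identification of each transversal with $\mathbb S^1=\mathbb R/\mathbb Z$ so that the transverse measure of the whole circle is $\lambda_1+\lambda_2+\lambda_3=1$, consistent with the normalization recorded for $\mathscr F_\lambda^0$ just before the proposition.

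The heart of the argument is a direct computation of the first-return (Poincar\'e) map of $\mathscr F_\lambda$ on $\gamma$. The separatrix segments drawn in green and red cut $\Sigma_\lambda$ into the six bands labelled $1,\dots,6$ in Figure~\ref{4-fold-cover-fig}, and the band widths are determined by the branch point data: the three $1$-prong singularities of $\mathscr F_\lambda^0$ sit at transverse heights $(\lambda_1+\lambda_2)/2$, $(\lambda_2+\lambda_3)/2$, $(\lambda_1+\lambda_3)/2$ along the boundary, so their four-fold preimages partition $\gamma$ into subintervals of lengths $\lambda_1/2,\lambda_1/2,\lambda_2/2,\lambda_2/2,\lambda_3/2,\lambda_3/2$. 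I would read off from the boundary identifications (marked by matching arrowheads in Figures~\ref{4-fold-cover-fig} and~\ref{ident-fig}) how a leaf entering each band re-meets $\gamma$, and verify that the resulting return map sends these six subintervals to their images exactly as prescribed by the permutation $\binom{1\,2\,3\,4\,5\,6}{2\,1\,4\,3\,6\,5}$ followed by the rotation $x\mapsto x+1/2$. In other words, I would check band by band that the return map equals the explicit piecewise-translation formula defining $T^{\mathrm{AR}}_\lambda$; the half-integer shift $\Phi$ is produced precisely by the antipodal identification of opposite boundary points of the disc in Figure~\ref{foli-fig}, which is why the cover is four-fold rather than two-fold.

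Once the two Poincar\'e maps on $\gamma$ and $\gamma_\lambda$ are shown to be the same transformation of $\mathbb S^1$, the desired isomorphism is assembled by a standard suspension/flow-box argument. Cutting $\Sigma_\lambda$ along $\gamma$ yields a foliated cylinder whose two sides are glued by the return map; cutting $\Sigma_\lambda^{\mathrm{AR}}$ along $\gamma_\lambda$ does the same with an identical gluing, so the leafwise flow gives a homeomorphism between the two cut cylinders that respects the foliation and transverse measure, and it descends to a foliation-preserving homeomorphism $\Sigma^{\mathrm{AR}}_\lambda\to\Sigma_\lambda$ because the identifications agree. I would finally confirm that the two monkey saddles correspond under this map, using that each is the image of the unique discontinuity pattern that forces a $3$-prong, so no extra collapsing or puncturing occurs. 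I expect the main obstacle to be the bookkeeping in the band-by-band verification: the green and red separatrix segments, the arrowhead identifications, and the antipodal gluing must be tracked simultaneously, and a single mislabelled band would corrupt the permutation. Concretely, the delicate point is matching the orientation and cyclic order in which the four sheets of the $(\mathbb Z_2\times\mathbb Z_2)$-cover meet along $\gamma$ so that the combined permutation-plus-rotation comes out exactly as $T^{\mathrm{AR}}_\lambda$ rather than a conjugate or reflected variant.
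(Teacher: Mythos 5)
Your plan is correct and follows essentially the same route as the paper's proof: the paper likewise establishes the isomorphism by observing that the first return map of $\mathscr F_\lambda$ on the transversal $\gamma$ (read off band by band from Figures~\ref{4-fold-cover-fig} and~\ref{ident-fig}) coincides with $T^{\mathrm{AR}}_\lambda$ after identifying $\gamma$ with $\mathbb S^1$, the mapping-torus/suspension structure then doing the rest. Your write-up simply makes explicit the flow-box assembly and the bookkeeping that the paper leaves as a direct check from the figures.
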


\begin{proof}
The claim follows from the fact that the first return map induced on~$\gamma$ by~$\mathscr F_\lambda$
coincides with~$T_\lambda^{\mathrm{AR}}$ (after a proper identification
between~$\gamma$ and~$\mathbb S^1$), which
can be learned directly from Figure~\ref{4-fold-cover-fig}.
\end{proof}

\section{Connection to Novikov's problem}\label{Novikovproblem}
The foliations~$\mathscr F_\lambda\cong\mathscr F^{\mathrm{AR}}_\lambda$, $\lambda\in\Delta$, are also topologically
equivalent to those that arose in~\cite{DD} in the course of study
Novikov's problem on electron's quasimomentum trajectories.
We now describe the construction from~\cite{DD}.

Denote by~$\widehat\Sigma^{\mathrm{PL}}$ the union of the following squares in~$\mathbb R^3$:
$$\begin{aligned}\{i\}\times[j,j+1]\times[k,k+1],\\
[j,j+1]\times\{i\}\times[k,k+1],\\
 [j,j+1]\times[k,k+1]\times\{i\},
 \end{aligned}$$
where $i,\,j,\,k\in\mathbb Z,\ j+k\equiv1(\mathrm{mod}\,2)$.
\begin{figure}[ht]
\centerline{\includegraphics[scale=0.6]{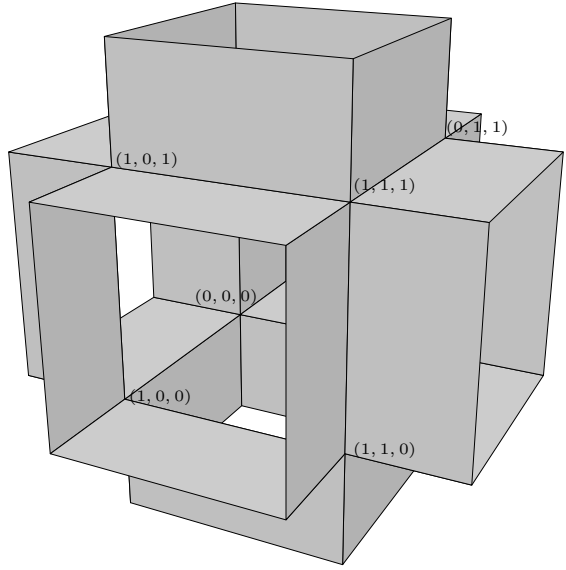}\put(-155,108){\tiny $(0,0,0)$}\put(-180,70){\tiny $(1,0,0)$}%
\put(-95,50){\tiny $(1,1,0)$}\put(-95,150){\tiny $(1,1,1)$}\put(-185,160){\tiny $(1,0,1)$}\put(-60,172){\tiny $(0,1,1)$}}
\caption{A fundamental domain of $\widehat\Sigma^{\mathrm{PL}}$}\label{surface-fundam-fig}
\end{figure}

One can see that~$\widehat\Sigma^{\mathrm{PL}}$ is a PL-surface invariant
under translations by vectors from~$2\mathbb Z^3\subset\mathbb R^3$.
Figure~\ref{surface-fundam-fig} shows a fundamental domain of~$\widehat\Sigma^{\mathrm{PL}}$
for this action. We define~$\Sigma^{\mathrm{PL}}$ to be the quotient surface~$\widehat\Sigma^{\mathrm{PL}}/(2\mathbb Z^3)$
embedded in~$\mathbb T^3=\mathbb R^3/(2\mathbb Z^3)$.

For~$\lambda\in\Delta$, we define~$\widehat{\mathscr F}^{\mathrm{PL}}_\lambda$ to be the foliation on~$\widehat\Sigma^{\mathrm{PL}}$
induced by the restriction~$\omega_\lambda=\Omega_\lambda|_{\widehat\Sigma^{\mathrm{PL}}}$ of the one-form
$$\Omega_\lambda=(\lambda_2+\lambda_3)\,dx_1+(\lambda_3+\lambda_1)\,dx_2+(\lambda_1+\lambda_2)\,dx_3,$$
where~$x_1,x_2,x_3$ are the standard coordinates on~$\mathbb R^3$. This foliation
is clearly invariant under the action of~$2\mathbb Z^3$, so we define~$\mathscr F^{\mathrm{PL}}_\lambda$ to
be the projection of~$\widehat{\mathscr F}^{\mathrm{PL}}_\lambda$ to~$\Sigma^{\mathrm{PL}}$.

\begin{proposition}\label{pl=ar-prop}
For any~$\lambda\in\Delta\setminus\{(1,0,0),(0,1,0),(0,0,1)\}$,
the foliation~$\mathscr F^{\mathrm{PL}}_\lambda$ on~$\Sigma^{\mathrm{PL}}$
is isomorphic to the foliation~$\mathscr F_\lambda$ on~$\Sigma_\lambda$, that is,
there is a homeomorphism~$\Sigma^{\mathrm{PL}}\rightarrow \Sigma_\lambda$
taking~$\mathscr F^{\mathrm{PL}}_\lambda$ to~$\mathscr F_\lambda$.
\end{proposition}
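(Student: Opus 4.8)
The strategy is to reduce the statement to Proposition~\ref{ar=rp2-prop} by exhibiting $T_\lambda^{\mathrm{AR}}$ as the first-return (Poincar\'e) map of $\mathscr F^{\mathrm{PL}}_\lambda$ on a suitable closed transversal of $\Sigma^{\mathrm{PL}}$. Since $(\Sigma_\lambda,\mathscr F_\lambda)\cong(\Sigma_\lambda^{\mathrm{AR}},\mathscr F_\lambda^{\mathrm{AR}})$ is, by construction, the mapping torus of $T_\lambda^{\mathrm{AR}}$, it suffices to find a closed transversal $\delta\subset\Sigma^{\mathrm{PL}}$ meeting every leaf of $\mathscr F^{\mathrm{PL}}_\lambda$ such that the induced return map, after identifying $\delta$ with $\mathbb S^1=\mathbb R/\mathbb Z$, coincides with $T_\lambda^{\mathrm{AR}}$. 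The foliated surface then reconstructs as the suspension of this map, and we conclude by Proposition~\ref{ar=rp2-prop}. This mirrors the argument used for Proposition~\ref{ar=rp2-prop} itself.

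First I would make the foliation explicit. Each face of $\widehat\Sigma^{\mathrm{PL}}$ lies in a coordinate plane $x_s=\mathrm{const}$, so $\omega_\lambda$ restricts there to a constant-coefficient $1$-form in the two remaining variables; its leaves are therefore parallel straight segments whose slope is governed by the two relevant coefficients among $\lambda_2+\lambda_3,\lambda_3+\lambda_1,\lambda_1+\lambda_2$. The singularities of $\mathscr F^{\mathrm{PL}}_\lambda$ sit at vertices of the polyhedron where the leaves on the incident faces assemble into a saddle; a direct local inspection shows that in the quotient $\Sigma^{\mathrm{PL}}=\widehat\Sigma^{\mathrm{PL}}/(2\mathbb Z^3)$ there are exactly two such points, each a monkey (double) saddle, so the Euler characteristic equals $-4$ and $\Sigma^{\mathrm{PL}}$ has genus three, matching the singularity data of $\mathscr F_\lambda$.

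A useful structural observation is that the translation by $(1,1,1)$ preserves $\widehat\Sigma^{\mathrm{PL}}$ (it sends a face with $j+k$ odd to one with $j+k$ odd) and preserves $\Omega_\lambda$, hence descends to a foliation-preserving involution $\iota$ of $\Sigma^{\mathrm{PL}}$ with $\Omega_\lambda(1,1,1)=2(\lambda_1+\lambda_2+\lambda_3)=2$; this involution is the geometric source of the half-rotation $\Phi$ in the definition of $T_\lambda^{\mathrm{AR}}$. I would then choose $\delta$ to be a closed transversal running through the faces in a fixed coordinate direction, normalized to total transverse measure $1$, and compute the return map by following each leaf across the successive faces until it comes back to $\delta$. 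The separatrices issuing from the two saddles cut $\delta$ into six subintervals; tracking the lengths (which come out to $\lambda_1/2,\lambda_1/2,\lambda_2/2,\lambda_2/2,\lambda_3/2,\lambda_3/2$ after normalization) and the reattachment pattern (the permutation $(12)(34)(56)$ composed with $\iota$, i.e.\ the shift by $1/2$) one recovers precisely the explicit formula for $T_\lambda^{\mathrm{AR}}$.

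The main obstacle is the bookkeeping in this last step: correctly tracing the integral curves of $\ker\omega_\lambda$ across the faces of the skew polyhedron $\{4,6\,|\,4\}$, and verifying both the six interval lengths and the precise gluing combinatorics, is delicate and depends on a careful choice of $\delta$ and of the separatrix picture. A secondary point is the range of $\lambda$: the statement excludes only the three vertices of $\Delta$, whereas Proposition~\ref{ar=rp2-prop} is stated for $\lambda\notin\partial\Delta$, so for $\lambda$ on an open edge one cannot route the conclusion through the suspension of a genus-three $\Sigma_\lambda^{\mathrm{AR}}$. For those $\lambda$ I would instead build the homeomorphism $\Sigma^{\mathrm{PL}}\to\Sigma_\lambda$ directly, cutting both surfaces along the separatrix segments into the same six labelled bands (as in Figure~\ref{4-fold-cover-fig}) and matching them band by band, the computation above guaranteeing that the widths and identifications agree.
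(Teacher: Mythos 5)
Your strategy is genuinely different from the paper's, and it is worth being clear about what the paper actually does: it never computes a return map on $\Sigma^{\mathrm{PL}}$ at all. Instead it observes that $\Sigma^{\mathrm{PL}}$ and $\mathscr F^{\mathrm{PL}}_\lambda$ are invariant under the $\mathbb Z_2\times\mathbb Z_2$ action on $\mathbb T^3$ generated by $x\mapsto x+(1,1,1)$ and $x\mapsto -x$, exhibits a fundamental domain of three square faces, checks that the quotient is $\mathbb RP^2$ carrying a foliation topologically equivalent to $\mathscr F_\lambda^0$, and then invokes the defining property of $(\Sigma_\lambda,\mathscr F_\lambda)$ from Section~\ref{foliation-sec}: it is the orientable branched four-sheeted ($\mathbb Z_2\times\mathbb Z_2$) cover of $(\mathbb RP^2,\mathscr F_\lambda^0)$ on which the foliation becomes orientable, and such a cover is unique. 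You noticed half of this symmetry (the $(1,1,1)$-translation) but used it only as a heuristic for the rotation $\Phi$, rather than as the engine of the proof. Your alternative route --- find a closed transversal of $\mathscr F^{\mathrm{PL}}_\lambda$ with return map $T^{\mathrm{AR}}_\lambda$ and reconstruct the foliated surface as the mapping torus --- is the method the paper itself uses for Propositions~\ref{ar=rp2-prop} and~\ref{double=ar-prop}, so it is legitimate in principle; but as written it has two genuine gaps.

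First, under your approach the entire content of the proposition is the verification that some explicit closed transversal $\delta\subset\Sigma^{\mathrm{PL}}$ meets every leaf and induces exactly $T^{\mathrm{AR}}_\lambda$: the six widths $\lambda_1/2,\lambda_1/2,\lambda_2/2,\lambda_2/2,\lambda_3/2,\lambda_3/2$ in the correct cyclic order and the correct gluing permutation composed with the half-rotation. You describe this computation but explicitly defer it as ``delicate bookkeeping''; no transversal is actually specified on the faces of the polyhedron, no separatrix is traced, so the proof is incomplete precisely at the step where all the work lies (avoiding this computation is presumably why the paper chose the quotient route). Second, the range of $\lambda$: the proposition includes $\lambda$ on the open edges of $\partial\Delta$, where Proposition~\ref{ar=rp2-prop} is unavailable, and your fallback --- cutting both surfaces into ``the same six labelled bands'' and matching them --- breaks down exactly there. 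If some $\lambda_i=0$, the corresponding bands have zero width, separatrices degenerate into saddle connections, and $\Sigma^{\mathrm{AR}}_\lambda$ is no longer of genus three (the paper claims genus three only for $\lambda\notin\partial\Delta$), so there is no common six-band picture to match. The paper's quotient-plus-covering argument is insensitive to these degenerations, which is why it covers all of $\Delta\setminus\{(1,0,0),(0,1,0),(0,0,1)\}$ uniformly; to complete your proof you would need both to carry out the return-map computation for interior $\lambda$ and to supply a genuinely different argument on the edges, or else switch to the covering argument.
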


\begin{proof}
The surface~$\widehat\Sigma^{\mathrm{PL}}$ consists of square faces of three different directions,
with the interior of each foliated by straight line intervals as shown in Figure~\ref{squares-fig},
where the transverse measure of the sides of each face is also indicated.
\begin{figure}[ht]
\centerline{\includegraphics[scale=.8]{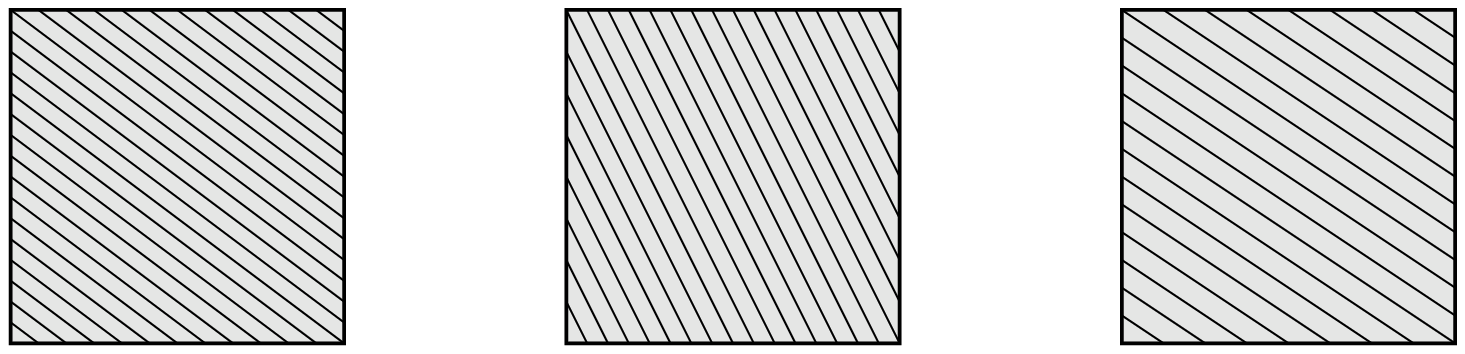}
\put(-100,0){$\scriptstyle(j,k,i)$}\put(-20,0){$\scriptstyle(j+1,k,i)$}
\put(-100,88){$\scriptstyle(j,k+1,i)$}\put(-30,88){$\scriptstyle(j+1,k+1,i)$}
\put(-55,0){$\scriptstyle\lambda_2+\lambda_3$}
\put(-110,44){$\scriptstyle\lambda_1+\lambda_3$}
\put(-228,0){$\scriptstyle(j,i,k)$}\put(-148,0){$\scriptstyle(j,i,k+1)$}
\put(-228,88){$\scriptstyle(j+1,i,k)$}\put(-158,88){$\scriptstyle(j+1,i,k+1)$}
\put(-183,0){$\scriptstyle\lambda_1+\lambda_2$}
\put(-238,44){$\scriptstyle\lambda_2+\lambda_3$}
\put(-356,0){$\scriptstyle(i,j,k)$}\put(-276,0){$\scriptstyle(i,j+1,k)$}
\put(-356,88){$\scriptstyle(i,j,k+1)$}\put(-286,88){$\scriptstyle(i,j+1,k+1)$}
\put(-311,0){$\scriptstyle\lambda_1+\lambda_3$}
\put(-366,44){$\scriptstyle\lambda_1+\lambda_2$}}
\caption{The foliation~$\widehat{\mathscr F}^{\mathrm{PL}}$ on the square faces
of~$\widehat\Sigma^{\mathrm {PL}}$; $i,\,j,\,k\in\mathbb Z,\ j+k\equiv1(\mathrm{mod}\,2)$}\label{squares-fig}
\end{figure}

After taking the quotient of
these  by~$2\mathbb Z^3$ we get 12 distinct square faces that constitute the surface~$\Sigma^{\mathrm{PL}}$.
It is a direct check that~$\Sigma^{\mathrm{PL}}$ is orientable, and so is the foliation~$\mathscr F_\lambda^{\mathrm{PL}}$.

Now there is an action of~$\mathbb Z_2\times\mathbb Z_2$ on the torus~$\mathbb T^3=\mathbb R^2/(2\mathbb Z^3)$
by the following two transformations and their composition:
\begin{equation}\label{action-eq}
x+2\mathbb Z^3\mapsto x+(1,1,1)+2\mathbb Z^3,\quad
x+2\mathbb Z^3\mapsto -x+2\mathbb Z^3.
\end{equation}
One can see that the surface~$\Sigma^{\mathrm{PL}}$ and the foliation~$\mathscr F^{\mathrm{PL}}_\lambda$
are invariant under this action.
The first transformation in~\eqref{action-eq} inverses the orientation of~$\Sigma$
whereas the second one preserves the orientation of~$\Sigma$ but flips the orientation of~$\mathscr F^{\mathrm{PL}}_\lambda$.
A~fundamental domain of~$\Sigma^{\mathrm{PL}}$ for this action can be chosen to be the union of three
square faces shown in Figure~\ref{3squares-fig}, where it is also indicated
which sides of the squares have to be identified to obtain~$\Sigma^{\mathrm{PL}}/(\mathbb Z_2\times\mathbb Z_2)$.
Figure~\ref{smoothing-fig} demonstrates that we get a surface homeomorphic to~$\mathbb RP^2$ with a foliation
topologically equivalent to~$\mathscr F_\lambda^0$.

Now the claim of the proposition follows from the fact that~$\Sigma^{\mathrm{PL}}$ is a branched orientable $4$-sheeted cover of this surface
on which the foliation also becomes orientable.
\end{proof}

\begin{figure}[ht]
\centerline{\includegraphics[scale=.7]{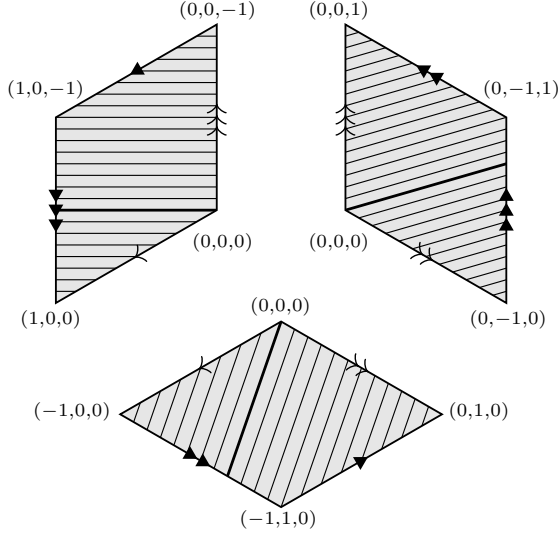}\put(-125,102){$\scriptstyle(0,0,0)$}
\put(-190,73){$\scriptstyle(1,0,0)$}\put(-130,190){$\scriptstyle(0,0,-1)$}\put(-195,160){$\scriptstyle(1,0,-1)$}
\put(-103,78){$\scriptstyle(0,0,0)$}\put(-185,38){$\scriptstyle(-1,0,0)$}\put(-107,-3){$\scriptstyle(-1,1,0)$}
\put(-28,38){$\scriptstyle(0,1,0)$}\put(-81,102){$\scriptstyle(0,0,0)$}\put(-20,73){$\scriptstyle(0,-1,0)$}
\put(-81,190){$\scriptstyle(0,0,1)$}\put(-15,160){$\scriptstyle(0,-1,1)$}}
\caption{A fundamental domain for~$\Sigma^{\mathrm PL}/(\mathbb Z_2\times\mathbb Z_2)$}\label{3squares-fig}
\end{figure}

\begin{figure}[ht]
\includegraphics[scale=.7]{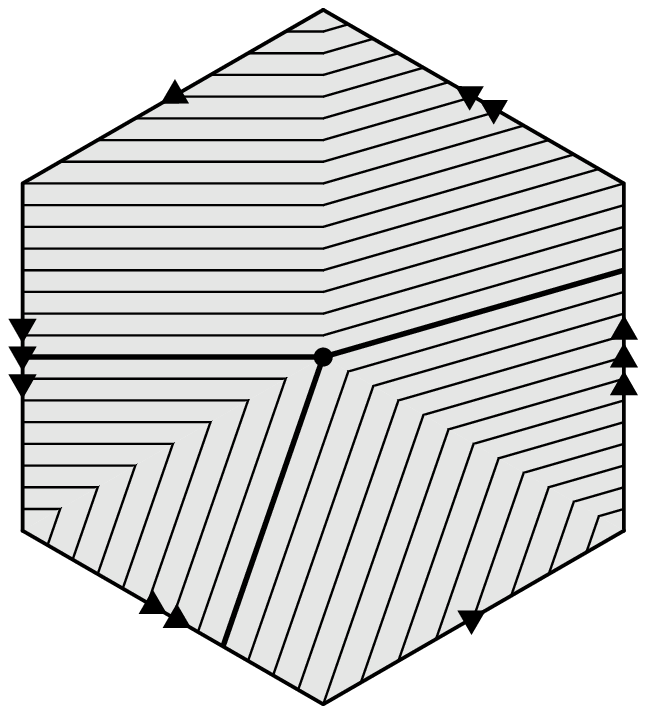}
\quad\raisebox{75pt}{$\sim$}\quad
\includegraphics[scale=.7]{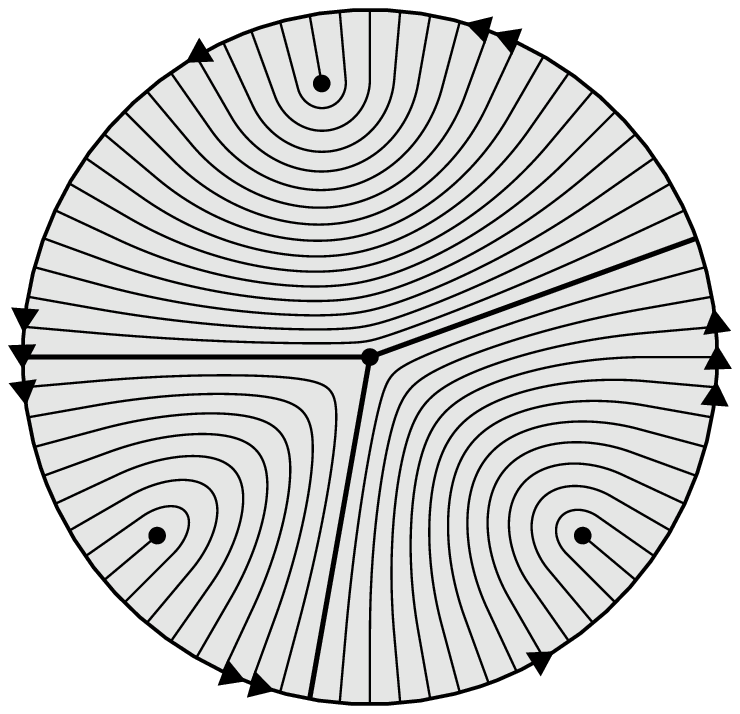}
\caption{The foliation on~$\Sigma^{\mathrm{PL}}/(\mathbb Z^2\times\mathbb Z^2)$ before (left) and after (right) smoothing}\label{smoothing-fig}
\end{figure}

\section{Systems of isometries}\label{systemsofisometries} 
The notion of a system of isometries was introduced by G.\,Levitt, D.\,Gaboriau, and F.\,Paulin in~\cite{GLP}.
A particular case appeared earlier in Levitt's paper~\cite{L}.
The concept of a system of isometries is essentially equivalent to that of a band complex (without orientation flips).
For the definition of a band complex the reader is referred to~\cite{BF}. 

For any two intervals~$I=[a,b]$ and~$J=[c,d]$ of equal length, $d-c=b-a$, we denote by~$t_{I,J}$ the translation
map from~$I$ to~$J$: $$t_{I,J}(x)=x-a+c,\quad x\in I.$$

\begin{definition}
\emph{A system of isometries} is a pair~$S=(D,X)$ in which~$D\subset\mathbb R$ is a finite union of closed intervals,
and~$X=\{\{A_1,B_1\},\ldots,\{A_n,B_n\}\}$ is a collection of unordered pairs of closed subintervals of~$D$ such that
in each pair~$\{A,B\}\in X$ the intervals~$A$ and~$B$ are of equal length. The set~$D$ is called
\emph{the support} of~$S$.

We think of~$S$ as a collection $\{t_{A,B}\}_{\{A,B\}\in X}$ of partial isometries of~$D$ and
write~$\phi\in S$ whenever~$\phi$ has the form~$t_{A,B}$ with~$\{A,B\}\in X$.
\end{definition}

\begin{definition}\label{orbit-str-graph-def}
With every system of isometries~$S$ we associate an infinite graph denoted~$\Gamma(S)$,
which we call \emph{the orbit structure graph of~$S$},
whose set of vertices is the support of~$S$ (endowed with a discrete topology),
and the edges are pairs of vertices~$\{x,y\}$ such that~$\phi(x)=y$
for some~$\phi\in S$. For any point~$x$ in the support of~$S$,
by~$\Gamma_x(S)$ (or simply~$\Gamma_x$ if~$S$ is fixed) we denote the connected
component of~$\Gamma(S)$ containing~$x$.

The graph~$\Gamma(S)$ is endowed with the path metric.
\end{definition}

\begin{definition}
\emph{The orbit~$O_x(S)$} of a system of isometries~$S=(D,X)$, \emph{passing through}~$x\in D$
is the set of vertices of~$\Gamma_x(S)$.

A system of isometries~$S=(D,X)$ is called \emph{minimal} if every orbit~$O_x(S)$ is everywhere dense in~$D$.
\end{definition}

\begin{definition}
A minimal system of isometries~$S$ is said to be of \emph{toral type} if all but finitely many
connected components of the graph~$\Gamma(S)$ are quasi-isometric to~$\mathbb R^k$
for some fixed~$k\geqslant 2$. If all but finitely many connected components of~$\Gamma(S)$
are quasi-isometric to~$\mathbb R$, and~$S$ is minimal, then~$S$ is said to be of \emph{surface type}.

A minimal system of isometries~$S$ is said to be of \emph{thin type} if every connected component
of~$\Gamma(S)$ is quasi-isometric to an infinite tree, but not quasi-isometric to a straight line.
\end{definition}

It is shown by E.Rips (see~\cite{BF}) that any minimal system of isometries is either of toral type, or surface type, or thin type.

\begin{definition}
\emph{An invariant measure} of a system of isometries~$S=(D,X)$ is a measure on~$D$ which is invariant
under all partial isometries~$\phi\in S$.
A system of isometries~$S=(D,X)$ is said to be \emph{uniquely ergodic} if the Lebesgue measure on~$D$
is the only invariant measure of~$S$, up to scale.
\end{definition}

\begin{theorem}\label{s-lambda-th}
For~$\lambda\in\Delta$, let~$S_\lambda$ be the following system of isometries:
\begin{equation}\label{S-lambda-eq}
S_\lambda=\bigl([0,1],\bigl\{\{[0,\lambda_1],[1-\lambda_1,1]\},\{[0,\lambda_2],[1-\lambda_2,1]\},\{[0,\lambda_3],[1-\lambda_3,1]\}\bigr\}\bigr).
\end{equation}
Then we have the following trichotomy.
\begin{itemize}
\item[{\rm(i)}]
If~$\lambda\notin\rg$ or~$\lambda\in\mathbb Q^3\cap\Delta$, then~$S_\lambda$ has a finite orbit and thus is not minimal.
\item[{\rm(ii)}]
If~$\lambda\in\rg_{\mathrm{irr}}$, then~$S_\lambda$ is of thin type and is uniquely ergodic.
\item[{\rm(iii)}]
If~$\lambda\in\rg\setminus\bigl(\rg_{\mathrm{irr}}\cup\mathbb Q^3)\bigr)$, then~$S_\lambda$
is of surface type and is uniquely ergodic.
\end{itemize}
\end{theorem}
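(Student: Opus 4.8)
The plan is to run, for the systems of isometries $S_\lambda$, the exact analogue of the Rauzy induction used for the IETs $T_\lambda^{\mathrm{AR}}$ in the proof of Proposition~\ref{AR-minimal-prop}, and to read the trichotomy off the directing word of $\lambda$. The crucial step is a renormalization lemma: if $\lambda_i\geqslant1/2$, i.e.\ $\lambda\in f_i(\Delta)$, then $S_\lambda$ reduces to $S_{f_i^{-1}(\lambda)}$. Take $i=1$ for concreteness (the cases $i=2,3$ follow by permuting indices). Since $1-\lambda_1\leqslant\lambda_1$, every point of $(\lambda_1,1]$ lies in the range $[1-\lambda_1,1]$ of $\phi_1=t_{[0,\lambda_1],[1-\lambda_1,1]}$, hence is joined in $\Gamma(S_\lambda)$ to its $\phi_1$-preimage in $(2\lambda_1-1,\lambda_1]$. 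Collapsing these edges (a Rips move) produces an induced system on $[0,\lambda_1]$ whose three bands are $\phi_1|_{[0,2\lambda_1-1]}$, $\phi_1^{-1}\circ\phi_2$ and $\phi_1^{-1}\circ\phi_3$; a direct computation shows that after rescaling $[0,\lambda_1]$ to $[0,1]$ by the factor $1/\lambda_1$ this is exactly $S_{f_1^{-1}(\lambda)}$, where $f_1^{-1}(\lambda)=\bigl((2\lambda_1-1)/\lambda_1,\lambda_2/\lambda_1,\lambda_3/\lambda_1\bigr)$. The collapse is a quasi-isometry on every connected component of the orbit structure graph and restricts to a bijection between invariant measures of $S_\lambda$ and of $S_{f_1^{-1}(\lambda)}$ sending Lebesgue to Lebesgue. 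Thus finiteness of orbits, the quasi-isometry type of components, minimality, and the dimension of the space of invariant measures are all preserved by the induction, and it suffices to analyze the situation after iterating it along a directing word.

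For case~(i), if $\lambda\in\mathbb Q^3$ the endpoints and translation lengths lie in $\tfrac1q\mathbb Z$ for a common denominator $q$, so every orbit is contained in a coset of $\tfrac1q\mathbb Z$ and is finite. If $\lambda\notin\rg$, then, as in the proof of Proposition~\ref{AR-minimal-prop}, finitely many induction steps bring $\lambda$ to a point $\mu\in\Delta_0\setminus\partial\Delta_0$ with $\mu_1,\mu_2,\mu_3<1/2$. Then the bases $[0,\mu_i]$ all lie in $[0,1/2)$ and the ranges $[1-\mu_i,1]$ all lie in $(1/2,1]$, so they are pairwise disjoint and the open interval $(\max_i\mu_i,\,1-\max_i\mu_i)$ is covered by no band. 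Each of its points is an isolated vertex of $\Gamma(S_\mu)$, i.e.\ a one-point orbit; pulling it back through the induction yields a finite orbit of $S_\lambda$, which cannot be dense in $[0,1]$, so $S_\lambda$ is not minimal.

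For case~(iii), $\lambda\in\rg\setminus(\rg_{\mathrm{irr}}\cup\mathbb Q^3)$, Proposition~\ref{regular-rauzy-prop}(i) gives $\lambda=f_{\slbf i}(\nu)$ with $\slbf i$ finite and $\nu\in\partial\Delta$. Iterating the induction reduces $S_\lambda$ to $S_\nu$, and for $\nu\in\partial\Delta$, say $\nu_3=0$, the third band degenerates to a point, leaving the two bands $\phi_1\colon[0,\nu_1]\to[\nu_2,1]$ and $\phi_2\colon[0,\nu_2]\to[\nu_1,1]$ with $\nu_1+\nu_2=1$. These generate the action $x\mapsto x\pm\nu_1\pmod1$, so the orbits of $S_\nu$ are the orbits of the rotation by $\nu_1$; since $\nu_1$ is irrational (otherwise $\nu\in\mathbb Q^3$ and $\lambda=f_{\slbf i}(\nu)\in\mathbb Q^3$), this system is minimal, each component of $\Gamma(S_\nu)$ is a bi-infinite path quasi-isometric to $\mathbb R$, and it is uniquely ergodic by unique ergodicity of the irrational rotation. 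Transporting back through the induction, $S_\lambda$ is of surface type and uniquely ergodic.

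Case~(ii) is the heart of the matter. For $\lambda\in\rg_{\mathrm{irr}}$ the directing word is infinite with every letter occurring infinitely often (Proposition~\ref{regular-rauzy-prop}), so the induction never terminates and, since some coordinate is $\geqslant1/2$ at each stage, no free subinterval ever appears; combined with the vanishing of the support length this gives minimality and the absence of finite orbits. Every component of $\Gamma(S_\lambda)$ is an infinite tree: a loop would produce a nontrivial relation $\phi_{j_1}^{\pm1}\circ\cdots\circ\phi_{j_m}^{\pm1}(x)=x$, hence an integral linear relation among $\lambda_1,\lambda_2,\lambda_3$, excluded by total irrationality; and the tree is not quasi-isometric to a line because all three bands stay active, producing infinitely many vertices of degree three. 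Thus $S_\lambda$ is of thin type. For unique ergodicity I would realize the space of invariant measures as a finite-dimensional cone renormalized by a nonnegative linear cocycle read off from the induction, and show it collapses to a single ray. The decisive point---and the main obstacle---is that, unlike the IET (where Proposition~\ref{non-uniquely-ergodic-prop} produces two invariant measures and Theorem~\ref{typically-uniquely-ergodic-th} needs the positive word $12312$ to recur), this measure cocycle is driven by the gasket matrices $M_i$, so any factor of the form $j_1j_2^kj_3$ with $\{j_1,j_2,j_3\}=\{1,2,3\}$ already contracts it, since $M_{j_1}M_{j_2}^kM_{j_3}A^{-1}\geqslant0$ as established in the proof of Proposition~\ref{regular-rauzy-prop}(ii). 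Such factors occur infinitely often in \emph{every} directing word of $\rg_{\mathrm{irr}}$, including the non-recurrent words $i_1^{k_1}i_2^{k_2}i_3^{k_3}\ldots$ of Proposition~\ref{non-uniquely-ergodic-prop} (there a block $i_j^{k_j}$ is flanked by the two other letters), so the Hilbert-metric contraction with factor bounded by that of $A$ forces a one-dimensional space of invariant measures. Carefully verifying that the measure cocycle is indeed the one governed by the $M_i$---equivalently, that the two distinct ergodic IET measures of Proposition~\ref{non-uniquely-ergodic-prop} project to a single invariant measure of $S_\lambda$---is the step demanding the most care.
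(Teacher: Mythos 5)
Your renormalization lemma and your treatment of cases (i) and (iii) are in substance the paper's own proof: the paper performs the same induction onto $[0,\lambda_i]$, phrased as one step of a Rips machine (a collapse from the free arc followed by a simplification), identifies the result with $S_{f_i^{-1}(\lambda)}$ scaled by $\lambda_i$, produces a finite orbit from $\Delta_0\setminus\partial\Delta_0$ when $\lambda\notin\rg$, and reduces to an irrational rotation when $\lambda\in\rg\setminus(\rg_{\mathrm{irr}}\cup\mathbb Q^3)$. The genuine gaps are both in case (ii). First, your argument that components of $\Gamma(S_\lambda)$ are trees fails: a cycle gives $\sum_i\epsilon_i(1-\lambda_{j_i})=0$, and after substituting $1=\lambda_1+\lambda_2+\lambda_3$ the integer coefficients of $\lambda_1,\lambda_2,\lambda_3$ can all vanish for a nontrivial reduced word --- precisely when $\sum_{i:\,j_i=k}\epsilon_i=0$ for each $k$, e.g.\ for a commutator $\phi_a\phi_b\phi_a^{-1}\phi_b^{-1}$, which is the identity wherever defined. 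So total irrationality does not exclude cycles (systems of toral type with totally irrational parameters have orbit graphs full of cycles); what excludes them for $S_\lambda$ is the geometry of the domains, which is exactly what the Rips machine tracks. The paper deduces thin type from the classification of \cite{BF,GLP} (the machine runs forever performing collapses from free arcs); tree-ness of components is then a \emph{consequence} (Proposition~\ref{almost-all-trees-prop}), not an input. Your criterion ``infinitely many degree-three vertices $\Rightarrow$ not quasi-isometric to a line'' is also insufficient: a line with bounded finite trees attached has infinitely many such vertices yet is quasi-isometric to $\mathbb R$.

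Second, the unique-ergodicity step has a hole exactly at the point you flag and leave open. The assertion that the measure cocycle is driven by the matrices $M_i$ is equivalent to every invariant measure being \emph{balanced} at every stage of the induction, i.e.\ $\mu([0,\lambda_1])+\mu([0,\lambda_2])+\mu([0,\lambda_3])=\mu([0,1])$ and likewise for all induced systems. Without this, the cocycle on the natural parameters $v_j=\mu([0,\lambda_j])$, $m=\mu([0,1])$ reads $v_1=m'$, $v_2=v_2'$, $v_3=v_3'$, $m=2m'-v_1'$ (the last because $(\lambda_1,1]=\phi_1\bigl((2\lambda_1-1,\lambda_1]\bigr)$); this matrix has a negative entry and supports no Hilbert-metric contraction argument, and only after imposing $m'=v_1'+v_2'+v_3'$ does it become $v=M_1v'$. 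The paper proves balancedness from the graph structure: the measure-conjugated system has orbit graph isometric to $\Gamma(S_\lambda)$, and an unbalanced system would have compact components (excluded by minimality) or non-simply-connected components (excluded by the tree property) --- so this step rests precisely on the thin-type/tree statement that your first argument does not establish. Granting balancedness, your contraction argument via $M_{j_1}M_{j_2}^kM_{j_3}A^{-1}\geqslant0$ and the occurrence of such factors in every word of $\mathscr X$ is correct, and is in substance the same contraction the paper channels through the uniqueness part of Proposition~\ref{regular-rauzy-prop}; the paper's route (conjugate by $x\mapsto\mu([0,x])$, identify the conjugated system as $S_{\lambda'}$, note that $\lambda'$ has the same directing word as $\lambda$, conclude $\lambda'=\lambda$) bypasses writing the cocycle altogether.
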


\begin{proof}
(i) It is easy to see that if~$\lambda\in\mathbb Q^3$, then all orbits of~$S_\lambda$
are finite. From now on we assume that~$\lambda\not\in\mathbb Q^3$.

If~$\lambda\in\Delta_0\setminus\partial\Delta_0$, the orbit~$O_{1/2}(S_\lambda)$ consists
of a single point, which is~$1/2$.

One can show that if~$\lambda\in f_i(\Delta)$, then the transition from~$S_\lambda$ to~$S_{f_i^{-1}(\lambda)}$
is essentially equivalent to running one step of a \emph{Rips machine}. 
The reader not familiar with the concept of a Rips machine is referred to~\cite{BF,GLP}
for a detailed account.

Informally, a Rips machine can be interpreted in two different ways. Originally, it appeared as an analogue of Makanin--Razborov procedure for solving equations in groups. It can also be seen as a renormalization procedure similar in its spirit to the Rauzy induction. It consists of six kinds of moves (of which we need only two: slide and collapse from a free arc)
and results in a classification of types of isometric actions of finitely generated group on $\mathbb R$-trees or, equivalently, of the resolving band complexes and systems of isometries corresponding to them.

Now we proceed with the definition of a modification of a Rips machine. 

Let~$S=(D,X)$ be a system of isometries such that~$D=D'\cup I$, where~$I$ is an interval disjoint from~$D'$,
and there is an interval~$J\subset D'$ such that~$\{I,J\}\in X$. Denote by~$\psi$ the following map~$D\rightarrow D'$:
$$\psi(x)=\left\{\begin{aligned}&x,&&\text{if }x\in D',\\
&t_{I,J}(x),&&\text{otherwise}.
\end{aligned}\right.$$
We will say that the system of isometries~$S'=(S',X')$ is obtained from~$S$ by \emph{a simplification} if
$$X'=\{\psi(I'),\psi(J')\}_{\{I',J'\}\in X,\,\{I',J'\}\ne\{I,J\}}.$$
Clearly, the map~$\psi$ extends to a quasi-isometry~$\Gamma(S)\rightarrow\Gamma(S')$,
so, a simplification does not change the dynamical properties of the system. Any simplification can
be decomposed into a sequence of several slides and a collapse from a free arc.

One can see that the addition of simplifications to the Rips machinery does not affect the output (toral type, surface type,
or thin type). This is due to the fact that
any transformation of a system of isometries obtained by running finitely many steps of a Rips machine with simplifications
can also be obtained by running an ordinary Rips machine \emph{followed} by a few simplifications.

In the case of~$S_\lambda$ with~$\lambda_1>\lambda_2>\lambda_3$ (other cases are
similar), we have a free arc~$(\lambda_1,1-\lambda_2)$.
A collapse from this free arc replaces~the pair~$\{[0,\lambda_1],[1-\lambda_1,1]\}$
with the following two pairs:
$$\{[0,\lambda_1-\lambda_2-\lambda_3],[\lambda_2+\lambda_3,\lambda_1]\},\
\{[\lambda_1-\lambda_2,\lambda_1],[1-\lambda_2,1]\},$$
and the new system is supported on~$[0,\lambda_1]\cup[1-\lambda_2,1]$.
By applying a simplification, we get the following system:
$$\bigl([0,\lambda_1],\bigl\{\{[0,\lambda_1-\lambda_2-\lambda_3],[1-\lambda_1,\lambda_1]\},
\{[0,\lambda_2],[\lambda_1-\lambda_2,\lambda_1]\},
\{[0,\lambda_3],[\lambda_1-\lambda_3,\lambda_1]\}
\bigr\}\bigr),$$
which is~$S_{f_1^{-1}(\lambda)}$ scaled~$\lambda_1$ times.

Thus, if~$\lambda\in f_i(\Delta)$, then the contraction map~$x\mapsto\lambda_ix$, $x\in[0,1]$,
extends to a quasi-isometry~$\Gamma(S_\lambda)\rightarrow\Gamma\bigl(S_{f_i^{-1}(\lambda)}\bigr)$.
Therefore, $S_\lambda$ has a finite orbit if and only if so has~$S_{f_i^{-1}(\lambda)}$.

If~$\lambda\notin\rg$, then there is a finite word~$\slbf i=i_1i_2\ldots i_k$
such that~$f_{\slbf i}^{-1}(\lambda)\in\Delta_0\setminus\partial\Delta_0$. Hence, $S_\lambda$ has a finite orbit.

(ii) Let~$\lambda\in\rg_{\mathrm{irr}}$, and let~$\slbf i=i_1i_2\ldots$ be the directing word for~$\lambda$.
As was just said, the transition from~$S_\lambda$ to~$S_{f_{i_1}^{-1}(\lambda)}$
is equivalent to running one step of a (inessentially modified) Rips machine, and this step includes
a collapse from a free arc. Thus, a Rips machine started from~$S_\lambda$
runs infinitely long, using collapses from a free arc and producing,
at every step, a system of isometries of the form~$S_{\lambda'}$,
$\lambda'\in\Delta\setminus\partial\Delta$. This implies that~$S_\lambda$ is minimal and of thin type.

Now let~$\mu$ be an invariant probability measure of~$S_\lambda$. Since
the system~$S_\lambda$ is minimal, the measure~$\mu$ is determined from
the following three parameters:
$$\lambda_1'=\mu([0,\lambda_1]),\
\lambda_2'=\mu([0,\lambda_2]),\
\lambda_3'=\mu([0,\lambda_3]),$$
and there is a homeomorphism~$[0,1]\rightarrow[0,1]$ that takes~$S_\lambda$
to the system
$$S'=\bigl([0,1],\bigl\{\{[0,\lambda_1'],[1-\lambda_1',1]\},\{[0,\lambda_2'],[1-\lambda_2',1]\},\{[0,\lambda_3'],[1-\lambda_3',1]\}\bigr\}\bigr),$$
with~$\Gamma(S')$ isometric to~$\Gamma(S_\lambda)$. This implies two things. First, $S$ is balanced
in the sense that~$\lambda_1'+\lambda_2'+\lambda_3=\mu([0,1])=1$,
or, equivalently, $\lambda'=(\lambda_1',\lambda_2',\lambda_3')\in\Delta$. Indeed, if~$\lambda_1'+\lambda_2'+\lambda_3<1$,
then~$\Gamma(S')$ would have compact connected components, and if~$\lambda_1'+\lambda_2'+\lambda_3>1$,
then~$\Gamma(S')$ would have non-simply connected components, neither of which holds for~$\Gamma(S_\lambda)$.
Thus,~$S'=S_{\lambda'}$.

Second, there is a Rips machine
running on~$S'=S_{\lambda'}$ exactly the same way (from the topological point of view)
as it does for~$S_\lambda$. This means that
the directing word for~$\lambda$ is also a directing word for~$\lambda'$, which implies~$\lambda'=\lambda$
due to Proposition~\ref{regular-rauzy-prop}. Therefore, $\mu$ coincides with the Lebesgue measure on~$[0,1]$.

(iii) Now suppose that~$\lambda\in\rg\setminus\rg_{\mathrm{irr}}$.
By Proposition~\ref{regular-rauzy-prop}, there exists a finite word~$\slbf i=i_1i_2\ldots i_k$
such that~$f_{\slbf i}^{-1}(\lambda)\in\partial\Delta$. This means that a Rips machine
started from~$S_\lambda$ produces a system~$S_{\lambda'}$ with
one of the parameters~$\lambda'_i$ equal to zero. Such a system is
equivalent to an irrational rotation, which is uniquely ergodic and of surface type.
\end{proof}

\begin{remark}
The ergodicity of $S_\lambda$ is equivalent to the ergodicity of the symbolic system defined by the corresponding Arnoux-Rauzy word. 
The ergodicity of this system is a corollary of a theorem of Boshernitzan~\cite{B}.
\end{remark}

\section{Double suspension surface}\label{double-susp-sec}

IETs can be viewed as a special case of systems of isometries.
There is, however, a relation of an opposite kind between the two subjects,
which was used implicitly in~\cite{D08,DS} to construct examples of chaotic regimes in Novikov's
problem and to study their properties. Namely, with every system of isometries
endowed with a little bit more combinatorial data, one associates naturally an orientable
surface with an orientable measured foliation, which gives rise to an IET
once a transversal intersecting all leaves has been chosen. As we will see below, in the case
of the system~$S_\lambda$, this general construction produces the foliation~$\mathscr F_\lambda\cong\mathscr F^{\mathrm{AR}}_\lambda
\cong\mathscr F^{\mathrm{PL}}_\lambda$
defined in the previous sections.

We now proceed with the description of this construction.

\begin{definition}
By \emph{an enhancement} of a system of isometries $S=(D,X)$ with
$$X=\bigl\{\{I_1,I_2\},\ldots,\{I_{2k-1},I_{2k}\}\bigr\}$$
we mean a circular ordering of the set~$\{I_1,I_2,\ldots,I_{2k}\}$.
To represent such a circular ordering we pick an injective map~$\theta:\{I_1,I_2,\ldots,I_{2k}\}\rightarrow\mathbb S^1$
such that the points~$\theta(I_i)$ follow on~$\mathbb S^1$ in the respective order.
The pair~$(S,\theta)$ will then be called \emph{an enhanced system of isometries}.
\end{definition}

With every enhanced system of isometries~$(S,\theta)$ we associate a surface~$\Sigma=\Sigma(S,\theta)$,
called \emph{the double suspension surface of~$(S,\theta)$}, endowed
with a singular measured foliation~$\mathscr F=\mathscr F(S,\theta)$ as follows.

Let~$S=\bigl(D,\bigl\{\{I_1,I_2\},\ldots,\{I_{2k-1},I_{2k}\}\bigr\}\bigr)$.
Pick an~$\varepsilon>0$ smaller than half the distance between any two points~$\theta_i$, $\theta_j$,
$i\ne j$, where we denote~$\theta(I_i)$ by~$\theta_i$ for brevity.
The surface~$\Sigma$ with the foliation~$\mathscr F$ is constructed in the following four steps.
\begin{description}
\item[\it Step 1]
Let~$(\Sigma_0,\mathscr F_0)$ be the Cartesian product~$D\times\mathbb S^1$ foliated
by the circles~$\{x\}\times\mathbb S^1$, $x\in D$. The leaves inherit the orientation from~$S^1$.
\item[\it Step 2]
Remove from~$\Sigma_0$ the interior of each rectangle~$I_i\times[\theta_i-\varepsilon,\theta_i+\varepsilon]$, $i=1,\ldots,2k$,
to obtain~$\Sigma_1$;
\item[\it Step 3]
For each~$i=1,\ldots,k$, identify the straight line segment~$I_{2i-1}\times\{\theta_{2i-1}-\varepsilon\}$ in~$\Sigma_1$
with~$I_{2i}\times\{\theta_{2i}+\varepsilon\}$, and $I_{2i-1}\times\{\theta_{2i-1}+\varepsilon\}$
with~$I_{2i}\times\{\theta_{2i}-\varepsilon\}$ using the map~$t_{I_{2i-1},I_{2i}}\times h$,
where~$h$ takes~$\theta_{2i-1}\pm\varepsilon$ to~$\theta_{2i}\mp\varepsilon$, to obtain~$\Sigma_2$.
\item[\it Step 4]
In~$\Sigma_2$, collapse to a point:
\begin{itemize}
\item
every circle of the form~$\{x\}\times\mathbb S^1$, $x\in\partial D$;
\item
every arc of the form~$\{x\}\times[\theta_i-\varepsilon,\theta_i+\varepsilon]$, $i=1,\ldots,2k$, $x\in\partial I_i$,
\end{itemize}
to obtain~$\Sigma$ (some of these points may coincide due to the identifications at the previous step).
The foliation~$\mathscr F$ is inherited from~$\mathscr F_0$.
\end{description}

We are particularly interested in the double suspension surface of the system~$S_\lambda$
defined by~\eqref{S-lambda-eq} and
endowed with the following cyclic ordering of the involved intervals:
\begin{equation}\label{interval-ordering-eq}
[0,\lambda_1],\ [0,\lambda_2],\ [0,\lambda_3],\ [1-\lambda_1,1],\ [1-\lambda_2,1],\ [1-\lambda_3,1].
\end{equation}
The respective enhancement of~$S_\lambda$ is denoted by~$\theta_\lambda$.
The double suspension surface of~$(S_\lambda,\theta_\lambda)$ is shown in Figure~\ref{double-susp-fig}, where
arcs with the same markings are identified.
\begin{figure}[ht]
\includegraphics[scale=.4]{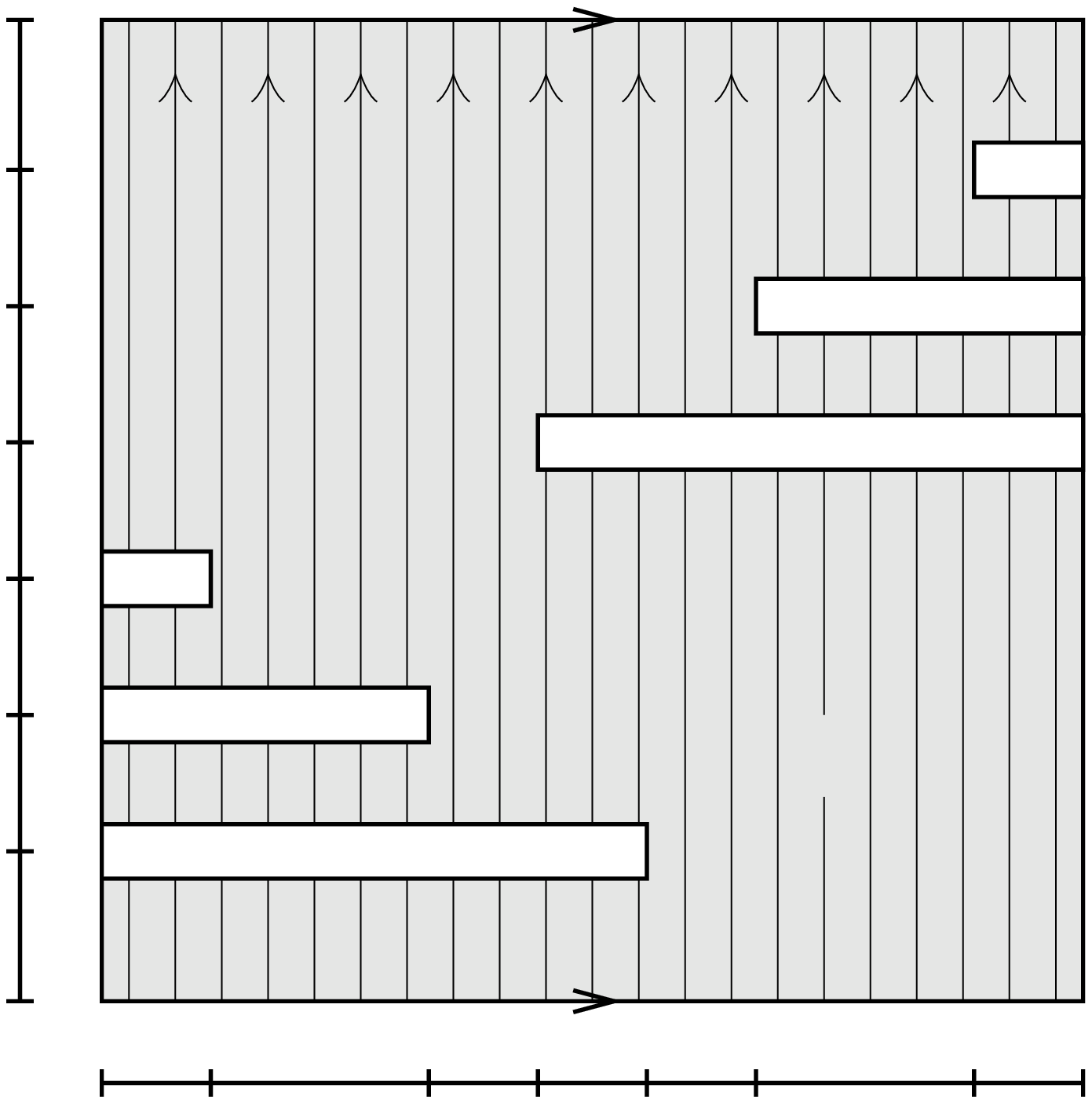}
\put(-150.5,0){$\scriptstyle0$}\put(-136,0){$\scriptstyle\lambda_3$}\put(-105,0){$\scriptstyle\lambda_2$}
\put(-92,0){$\scriptstyle1{-}\lambda_1$}\put(-72,0){$\scriptstyle\lambda_1$}
\put(-59,0){$\scriptstyle1{-}\lambda_2$}\put(-28,0){$\scriptstyle1{-}\lambda_3$}
\put(-6,0){$\scriptstyle1$}
\put(-171,41){$\scriptstyle\theta_1$}\put(-171,61){$\scriptstyle\theta_2$}
\put(-171,81){$\scriptstyle\theta_3$}\put(-171,101){$\scriptstyle\theta_4$}
\put(-171,121){$\scriptstyle\theta_5$}\put(-171,141){$\scriptstyle\theta_6$}
\put(-171,19){$\scriptstyle0$}\put(-171,163){$\scriptstyle1$}
\put(-47,53){$\Sigma_1$}
\hskip1cm
\includegraphics[scale=.4]{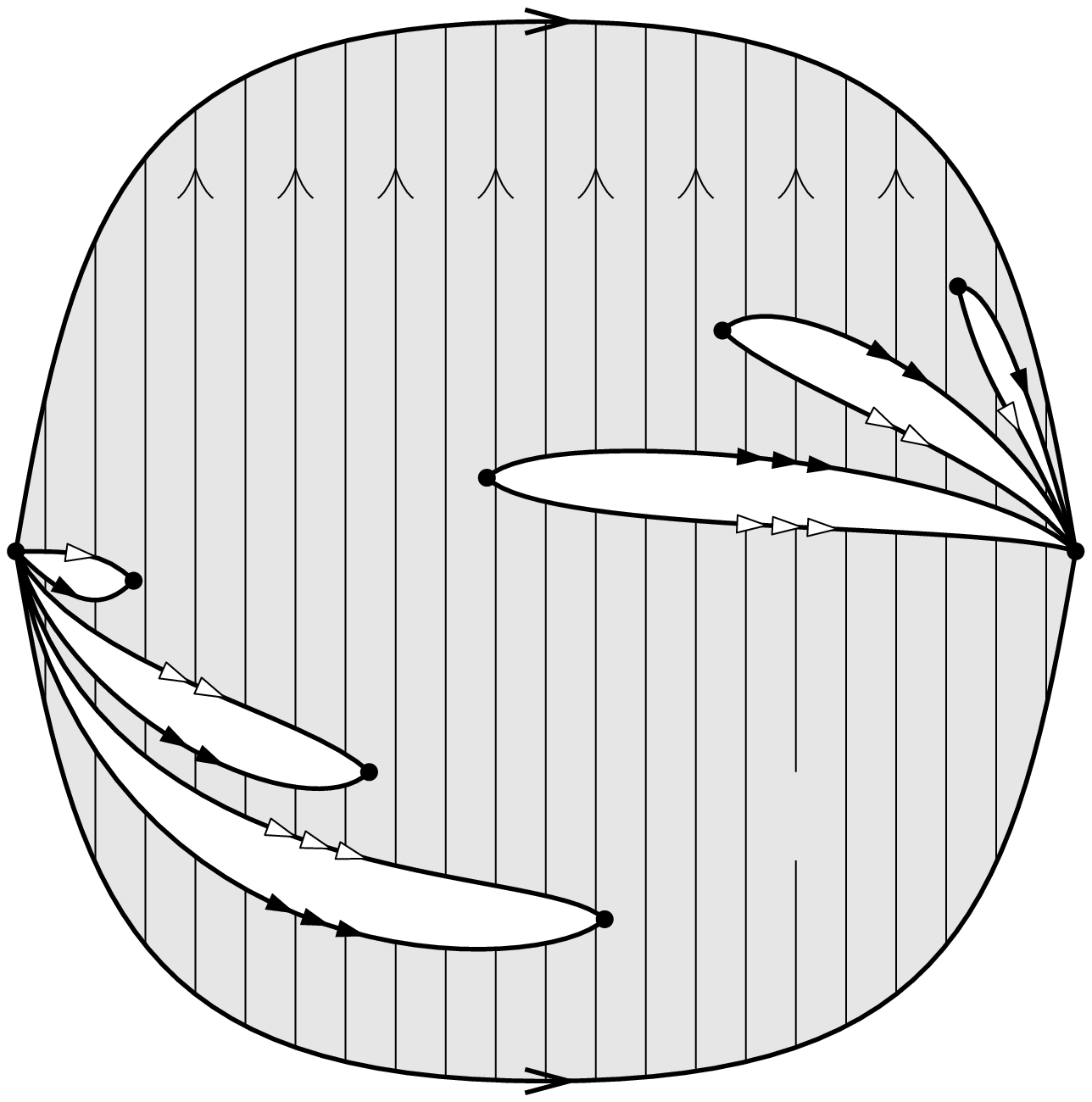}\put(-45,53){$\Sigma$}
\caption{Double suspension surface of~$(S_\lambda,\theta_\lambda)$}\label{double-susp-fig}
\end{figure}

\begin{proposition}\label{double=ar-prop}
The double suspension surface~$(\Sigma,\mathscr F)$ of~$(S_\lambda,\theta_\lambda)$ endowed
with the respective singular measured foliation is equivalent to~$(\Sigma_\lambda,\mathscr F_\lambda)\cong
(\Sigma^{\mathrm{AR}}_\lambda,\mathscr F^{\mathrm{AR}}_\lambda)$ \emph(after multiplying by two
the transverse measure of the latter\emph).
\end{proposition}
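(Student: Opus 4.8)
The plan is to construct an explicit homeomorphism between the double suspension surface $(\Sigma,\mathscr F)$ of $(S_\lambda,\theta_\lambda)$ and the surface $(\Sigma^{\mathrm{AR}}_\lambda,\mathscr F^{\mathrm{AR}}_\lambda)$ carrying its foliation, matching the foliations and scaling the transverse measure by two. Since Proposition~\ref{ar=rp2-prop} already identifies $(\Sigma^{\mathrm{AR}}_\lambda,\mathscr F^{\mathrm{AR}}_\lambda)$ with $(\Sigma_\lambda,\mathscr F_\lambda)$, it suffices to work with whichever of these models is most convenient; the most direct route is to compare the Poincar\'e map induced by $\mathscr F$ on a suitable closed transversal with the Arnoux--Rauzy IET $T^{\mathrm{AR}}_\lambda$. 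First I would read off from Figure~\ref{double-susp-fig} the structure of $(\Sigma,\mathscr F)$: the double suspension of $S_\lambda$ glues the rectangles over the three pairs $\{[0,\lambda_i],[1-\lambda_i,1]\}$ according to the circular ordering~\eqref{interval-ordering-eq}, and the foliation $\mathscr F$ is inherited from the circle factor $\mathbb S^1$. The six intervals involved, listed in the order~\eqref{interval-ordering-eq}, are exactly the six subintervals of the Arnoux--Rauzy partition, which makes the combinatorial matching plausible.

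The key computation is to choose a closed transversal $\tau$ of $\mathscr F$—naturally a horizontal circle $D\times\{\mathrm{pt}\}$ placed at an angle avoiding all the $\theta_i\pm\varepsilon$ gluing levels—and to compute the first return map of $\mathscr F$ to $\tau$. A leaf starting at $x\in D$ winds around the $\mathbb S^1$ direction; each time it crosses one of the removed rectangles $I_i\times[\theta_i-\varepsilon,\theta_i+\varepsilon]$ it is carried by the Step~3 identification across to the paired interval, i.e.\ by the partial isometry $t_{I_{2i-1},I_{2i}}$ together with the orientation flip $h$ on the $\mathbb S^1$ factor. Tracking which interval a point lies in and following it through one full return to $\tau$ should reproduce, on the nose, the permutation $(12)(34)(56)$ composed with a half-turn that defines $T^{\mathrm{AR}}_\lambda$: the pairing $\{[0,\lambda_i],[1-\lambda_i,1]\}$ produces the two half-subintervals $[\,\cdot\,,\cdot+\lambda_i/2)$ with the $(1\pm\lambda_i)/2$ translations visible in the explicit formula for $T^{\mathrm{AR}}_\lambda$, and the orientation reversal $h$ accounts for the swap within each pair. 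I would verify, interval by interval for one representative block and invoke symmetry for the rest, that the return map equals $T^{\mathrm{AR}}_\lambda$ after identifying $\tau$ with $\mathbb S^1$ and that the induced transverse measure is $2\lambda_i$ on the $i$-th block, which is the source of the factor of two in the statement (the $\mathbb S^1$ fiber has total transverse measure giving each side of a pair measure $\lambda_i$, so both halves together contribute $2\lambda_i$).

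Once the return maps agree, I would upgrade this to an isomorphism of the suspended surfaces with foliations. The standard argument is that two orientable measured foliations sharing a closed transversal on which the first return maps coincide, and with matching singularity data, are isomorphic: one reconstructs each surface as the mapping torus (suspension) of its return map, cut along the transversal. So the remaining points to check are that $\Sigma$ has genus three with two monkey-saddle singularities—which follows by a Euler-characteristic count using the Step~4 collapses of the boundary circles and boundary arcs—and that these singularities correspond to the two monkey saddles of $\mathscr F^{\mathrm{AR}}_\lambda$ identified in Proposition~\ref{AR-minimal-prop}. I expect the main obstacle to be the careful bookkeeping in Step~3 and Step~4: one must confirm that the orientation-reversing gluing $h$ on the $\mathbb S^1$ factor is precisely what converts the three translation pairs of $S_\lambda$ into the six-interval swap combinatorics of $T^{\mathrm{AR}}_\lambda$, and that the collapses at the interval endpoints merge exactly the right arcs to create the two order-three (monkey) saddles rather than spurious extra singularities. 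This is essentially a matter of reading the figures correctly, but it is where an error would hide, so I would present it as an explicit trajectory computation for one interval and defer the rest to the symmetry of the construction under cyclic permutation of the indices $\{1,2,3\}$.
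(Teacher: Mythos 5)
Your overall strategy is the same as the paper's: exhibit a closed transversal of $\mathscr F$ whose Poincar\'e map is $T^{\mathrm{AR}}_\lambda$ with measure doubled, then conclude by the standard suspension argument (that final reduction is fine, and is exactly the reduction the paper makes). The gap is in the one step that carries all the content: your choice of transversal. A horizontal section $D\times\{\mathrm{pt}\}$ cannot work. First, $D=[0,1]$ is an interval, so $D\times\{\mathrm{pt}\}$ is an arc, not a circle; after the Step~4 collapses its two endpoints become precisely the two saddle points of~$\mathscr F$ (the collapsed circles $\{0\}\times\mathbb S^1$ and $\{1\}\times\mathbb S^1$), so you get a transversal arc joining the singularities rather than a closed transversal. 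Second, and decisively, its transverse measure is $|D|=1$, whereas a transversal on which the return map is ``$T^{\mathrm{AR}}_\lambda$ scaled twice'' must have total transverse measure $2$ --- this is what the factor of two in the statement records, and your own accounting (``each pair contributes $2\lambda_i$'', total $2$) contradicts the measure $1$ of the section you chose. The failure is visible dynamically: every return of a leaf to a fixed horizontal level is a composition of the gluing translations $x\mapsto x\pm(1-\lambda_i)$, so all translation constants of the level-set return map lie in $\mathbb Z+\mathbb Z\lambda_1+\mathbb Z\lambda_2+\mathbb Z\lambda_3$, while the translation constants of $T^{\mathrm{AR}}_\lambda$ are $(1\pm\lambda_i)/2$, which do not lie in that group when $\lambda\in\rg_{\mathrm{irr}}$; since both circles would have measure $1$, no rescaling can repair this. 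Even more concretely, for $\lambda$ in the interior of $\Delta_0$ every $x\in(\max_i\lambda_i,\,1-\max_i\lambda_i)$ gives a vertical circle leaf missing all the rectangles, so the level-set return map fixes a whole interval, whereas $T^{\mathrm{AR}}_\lambda$ has no fixed points at all (each point is translated by $(1\pm\lambda_i)/2\in(0,1)$; its shortest periodic orbits have period three). So the ``interval by interval'' verification you plan would break down at the first branch.

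What the paper actually does is construct a different closed transversal $\gamma$ (Figure~\ref{sigma-trans-fig}), which is not a level set: it is chosen so that its total transverse measure is $2$, meeting the six bands cut out by the separatrices so as to pick up measure $\lambda_i$ from the band of each interval of $S_\lambda$ --- note that the six intervals of $T^{\mathrm{AR}}_\lambda$ with measure doubled have lengths exactly $\lambda_1,\lambda_1,\lambda_2,\lambda_2,\lambda_3,\lambda_3$, matching $|I_1|,\dots,|I_6|$ --- and only for such a curve is it a direct check that the Poincar\'e map is $T^{\mathrm{AR}}_\lambda$ scaled twice. A level set instead sweeps $D$ once and is blind to this pairing structure. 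Without producing a transversal of this kind (for instance, a closed curve crossing the surface ``horizontally'' twice), the proof does not get off the ground, so as written the proposal fails.
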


\begin{proof}
It suffices to find a transversal~$\gamma$ for~$\mathscr F$ the Poincar\'e map on which coincides
with~$T^{\mathrm{AR}}_\lambda$ (after a suitable identification of~$\gamma$ with~$\mathbb S^1$).
Such a transversal is shown in Figure~\ref{sigma-trans-fig}, where the six strips into which the separatrices---%
continued to the first intersection with~$\gamma$---cut the surface are also indicated (we use the~$\Sigma_2$-stage
of the construction of~$\Sigma$ in the picture).
\begin{figure}[ht]
\includegraphics[scale=.4]{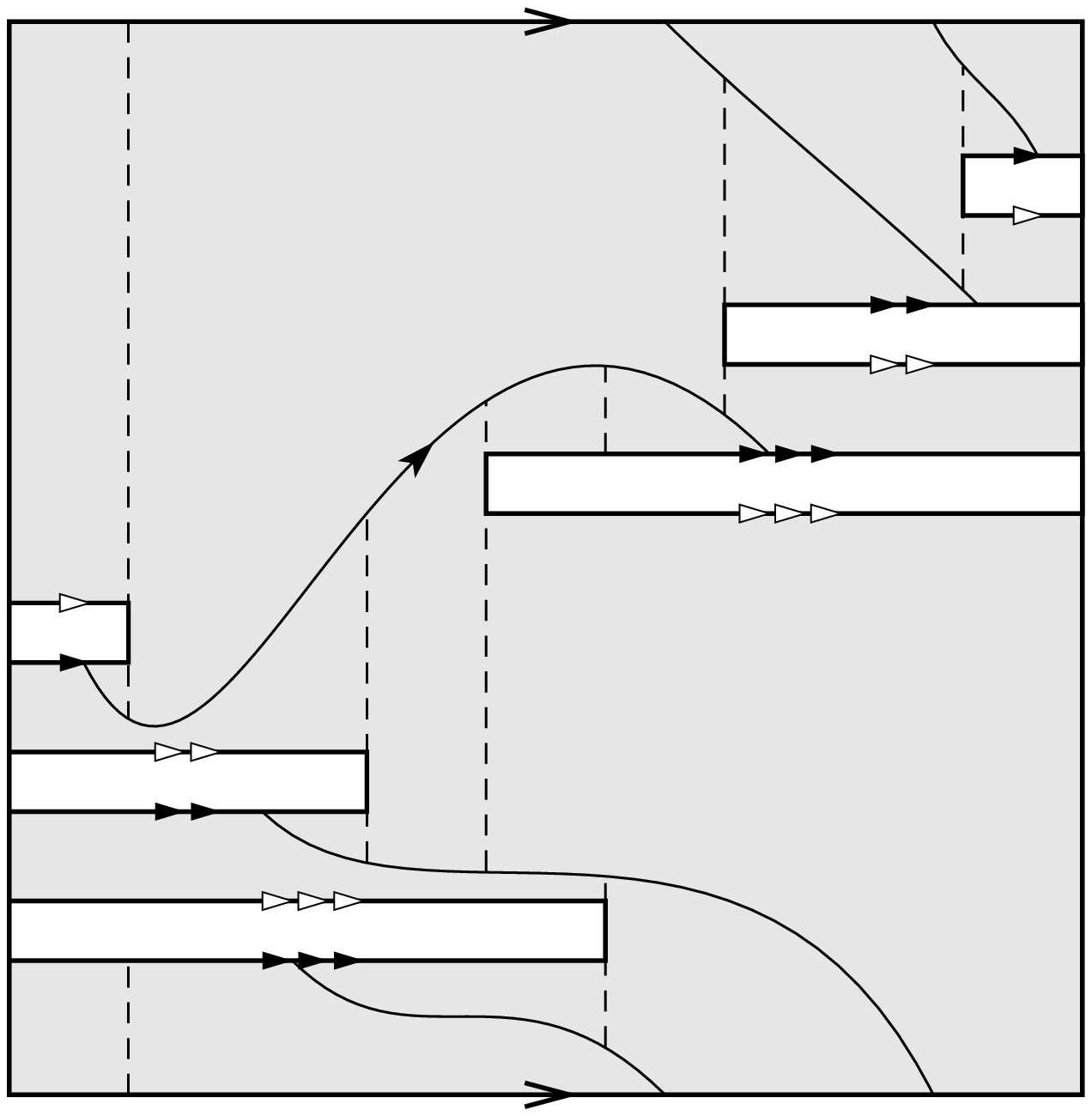}
\put(-95,120){$2$}\put(-110,8){$2$}\put(-65,92){$2$}
\put(-30,93){$3$}\put(-90,16){$\scriptstyle3$}\put(-115,55){$3$}
\put(-19,132){$\scriptstyle3$}\put(-53,12){$4$}\put(-35,135){$4$}
\put(-14,113){$5$}\put(-105,37){$\scriptstyle5$}
\put(-142,10){$5$}\put(-142,110){$5$}\put(-78,92){$5$}
\put(-95,60){$6$}\put(-45,50){$1$}\put(-120,33){$1$}
\put(-42,115){$1$}\put(-13,138){$1$}\put(-136,57){$\scriptstyle1$}
\put(-100,93){$\gamma$}
\caption{The transversal~$\gamma$ on~$\Sigma_\lambda$ on which
the first return map is~$T^{\mathrm{AR}}_\lambda$}\label{sigma-trans-fig}
\end{figure}
It is a direct check that the Poincar\'e map on~$\gamma$ is, indeed, $T^{\mathrm{AR}}_\lambda$ scaled twice.
\end{proof}

\section{Unique ergodicity of~$S_\lambda$ and~$T_\lambda^{\mathrm{AR}}$}

According to Theorem~\ref{s-lambda-th}, the system of isometries~$S_\lambda$ is uniquely ergodic whenever~$\lambda\in\rg_{\mathrm{irr}}$.
However, this does not imply the unique ergodicity of the foliation~$\mathscr F_\lambda$
(equivalently, of the IET~$T^{\mathrm{AR}}_\lambda$). Moreover, we have seen above
in Proposition~\ref{non-uniquely-ergodic-prop} that there exist uncountably many~$\lambda\in\rg_{\mathrm{irr}}$
such that~$\mathscr F_\lambda$ is \emph{not} uniquely ergodic. Here we discuss the nature of this discrepancy, which
may also occur for more general system of isometries.

Let~$(S,\theta)$ be an arbitrary enhanced system of isometries with
$$S=(D,X),\quad X=\bigl\{\{I_1,I_2\},\ldots,\{I_{2k-1},I_{2k}\}\bigr\},$$
and let~$(\Sigma,\mathscr F)$ be the double suspension surface of~$(S,\theta)$
endowed with the respective singular measured foliation.

In what follows we make the following two assumptions on~$S$ (which hold for~$S_\lambda$ when~$\lambda\in\rg_{\mathrm{irr}}$):
\begin{enumerate}
\item[(A1)]
$S$ is \emph{balanced}, which means that the total length of the intervals~$I_i$, $i=1,2,\ldots,2k$, is equal to the total
length of~$D$:
$$\sum_{i=1}^{2k}|I_i|=|D|;$$
\item[(A2)]
$S$ is of thin type.
\end{enumerate}

\begin{theorem}\label{ergodicity-dim-th}
Under the assumptions made above, if~$S$ is uniquely ergodic, then~$\mathscr F$ has at most two
ergodic components. If~$\mathscr F$ is uniquely ergodic, then so is~$S$.

More generally,
let~$\mathscr M(S)$ be the space of all invariant
Borel measures
of~$S$. Then
\begin{equation}
\dim\mathscr M(S)\leqslant\dim\mathscr M(\mathscr F)\leqslant2\dim\mathscr M(S).
\end{equation}

\end{theorem}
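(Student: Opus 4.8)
The plan is to set up an explicit linear correspondence between invariant transverse measures of $\mathscr F$ and invariant measures of $S$, organized by the two ``sides'' of the $\mathbb S^1$-suspension. Given $\nu\in\mathscr M(\mathscr F)$, I would first restrict it to the transversals $D\times\{t\}$. In each of the $2k$ open arcs of $\mathbb S^1$ between consecutive rectangle zones $I_i\times[\theta_i-\varepsilon,\theta_i+\varepsilon]$ the holonomy is trivial, so $\nu|_{D\times\{t\}}$ is a fixed measure on $D$ there; denote by $P_i$ and $Q_i$ its restrictions to $I_i$ on the transversals immediately below ($\theta_i-\varepsilon$) and immediately above ($\theta_i+\varepsilon$) the $i$-th zone. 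The gluings of Step~3, together with the orientation reversal built into $h$ (it sends $\theta_{2i-1}\pm\varepsilon$ to $\theta_{2i}\mp\varepsilon$), translate directly into the holonomy relations
\begin{equation}
(t_{A,B})_*P_A=Q_B,\qquad (t_{A,B})_*Q_A=P_B
\end{equation}
for every pair $\{A,B\}\in X$. Collapsing the boundary circles and arcs in Step~4 imposes the matching conditions at the singularities; here assumption~(A1) is what guarantees that no transverse mass is absorbed into these collapsed leaves, so that $\nu$ is completely recovered from the family $(P_i,Q_i)_{i=1}^{2k}$. Thus $\nu\mapsto(P_i,Q_i)_i$ is injective.

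Next I would diagonalize the per-pair relations. The swap $P\leftrightarrow Q$ acts on the boundary data with eigenvalues $\pm1$; writing $\mu^{\mathrm{sym}}=P+Q$ and $\mu^{\mathrm{anti}}=P-Q$, the relations above become $(t_{A,B})_*\mu^{\mathrm{sym}}_A=\mu^{\mathrm{sym}}_B$ and $(t_{A,B})_*\mu^{\mathrm{anti}}_A=-\mu^{\mathrm{anti}}_B$. Hence the symmetric part is an honest $S$-invariant measure, while the antisymmetric part is \emph{anti-invariant} (isometry-invariant up to a sign flip). For $S_\lambda$ this is precisely the splitting of the matrices $N_i=\mathbbm1_6+P_i\otimes B$ along the $+1$-eigenspace of $B$, where $N_i$ acts as $M_i$ (the cocycle computing $\mathscr M(S)$), and the $-1$-eigenspace, where it acts as $2\mathbbm1_3-M_i$. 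Since $\nu$ is determined by $(\mu^{\mathrm{sym}},\mu^{\mathrm{anti}})$, we obtain
\begin{equation}
\dim\mathscr M(\mathscr F)\leqslant\dim\mathscr M(S)+\dim\mathscr A(S),
\end{equation}
where $\mathscr A(S)$ denotes the space of anti-invariant signed measures.

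For the \emph{lower} bound I would build a section $\mathscr M(S)\hookrightarrow\mathscr M(\mathscr F)$: given $\mu\in\mathscr M(S)$, the symmetric suspension $P=Q=\tfrac12\mu$ satisfies the relations above exactly because $\mu$ is $S$-invariant, and hence extends (using minimality, which is part of~(A2)) to a genuine transverse measure of $\mathscr F$ whose symmetric part recovers $\mu$; so the map is injective. For the \emph{upper} bound it remains to show $\dim\mathscr A(S)\leqslant\dim\mathscr M(S)$. Since $P,Q\geqslant0$ we have $|\mu^{\mathrm{anti}}|\leqslant\mu^{\mathrm{sym}}$, so every anti-invariant measure is $h\,\mu^{\mathrm{sym}}$ for a bounded density $h$ with $h\circ t_{A,B}=-h$. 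On a single ergodic component of $\mu^{\mathrm{sym}}$ the ratio of two such densities is invariant, hence a.e.\ constant, so the anti-invariant densities form a space of dimension at most one there; summing over the at most $\dim\mathscr M(S)$ ergodic components gives $\dim\mathscr A(S)\leqslant\dim\mathscr M(S)$. Combining the two bounds yields $\dim\mathscr M(S)\leqslant\dim\mathscr M(\mathscr F)\leqslant2\dim\mathscr M(S)$, and the two opening assertions are the special cases $\dim\mathscr M(S)=1$ (forcing $\dim\mathscr M(\mathscr F)\leqslant2$) and $\dim\mathscr M(\mathscr F)=1$ (which, with the always-present Lebesgue measure, forces $\dim\mathscr M(S)=1$).

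The main obstacle I expect is making the boundary-data correspondence $\nu\leftrightarrow(P_i,Q_i)$ rigorous when the intervals $I_i$ overlap in $D$: there the $P_i$ are restrictions to $I_i$ of a common gap measure rather than pieces of a global decomposition, and one must check that the Step~4 matching conditions together with~(A1) leave no residual freedom and deposit no mass on the collapsed leaves. The second delicate point is the inequality $\dim\mathscr A(S)\leqslant\dim\mathscr M(S)$: the ergodicity argument bounds the antisymmetric densities per component, but identifying these components with those of $\mathscr M(S)$, and guaranteeing that an anti-invariant density can be nonzero at all, is exactly where minimality and the tree (hence bipartite) structure of $\Gamma(S)$ supplied by~(A2) must be used.
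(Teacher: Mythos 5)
Your lower bound is fine: taking all gap measures equal to $\tfrac12\mu$ is exactly the paper's pullback $\mu\mapsto\mu^\dag$, and its injectivity is the paper's lemma. The fatal problem is in the upper bound, at the step ``hence the symmetric part is an honest $S$-invariant measure.'' The relations $(t_{A,B})_*\mu^{\mathrm{sym}}_A=\mu^{\mathrm{sym}}_B$ hold pair by pair, but an element of $\mathscr M(S)$ is a single measure on $D$, so you need the collection $(P_i+Q_i)_i$ to agree on the overlaps $I_i\cap I_j$ --- and this is not the routine verification you defer at the end, it is false, and it fails precisely in the cases where the theorem has content. To see why, use Lemma~\ref{number-of-leaves-lem}: over an orbit whose graph $\Gamma_x(S)$ is a two-ended tree there are exactly two leaves, corresponding to the two boundary components of a ribbon neighbourhood of the tree, i.e.\ to the two sides of its bi-infinite spine. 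If $y$ lies on the spine and the band $I_i\ni y$ gives a spine edge at $y$, the two corners adjacent to the half-edge $(y,i)$ lie on opposite boundary components, so crossing that band switches leaves, and for an ergodic transverse measure $\nu$ the density of $P_i+Q_i$ is $1$ at $y$. But if the band $I_j\ni y$ gives a ``decoration'' edge (leading into a finite subtree hanging off the spine), both adjacent corners lie on the same boundary component, crossing does not switch leaves, and the density of $P_j+Q_j$ at $y$ is $0$ or $2$. Hence $P_i+Q_i\neq P_j+Q_j$ on $I_i\cap I_j$. For $S_\lambda$ with $\lambda\in\rg_{\mathrm{irr}}$ chosen non-uniquely ergodic as in Proposition~\ref{non-uniquely-ergodic-prop}, almost every orbit graph is a two-ended tree, every point of $(0,\min_i\lambda_i)$ lies in all three intervals $[0,\lambda_i]$ and so has degree $\geqslant3$, hence at least one decoration edge; a mass-transport argument over the orbit equivalence relation shows that spine vertices of degree $\geqslant3$ have positive Lebesgue measure. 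So for the two ergodic measures of $\mathscr F_\lambda$ the symmetric parts genuinely do not glue, your map into $\mathscr M(S)\times\mathscr A(S)$ is undefined, and without global gluing the per-pair symmetric data ranges over an infinite-dimensional space, so the inequality collapses.

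The conceptual reason a local fix is impossible: which of the two leaves over an orbit a transversal point lies on is a global invariant (which end of the tree, i.e.\ which side of the spine), not a sign that flips at every band crossing. The leaf-membership indicator flips when you cross a spine band but not when you cross a decoration band, whereas any density of the form (invariant) $+$ (anti-invariant, $h\circ t_{A,B}=-h$) must flip at every crossing; these are incompatible as soon as decorations carry positive measure. This is exactly what the paper's proof is built to handle: Lemma~\ref{number-of-leaves-lem} identifies leaves over an orbit with topological ends of its tree via the ribbon structure, Proposition~\ref{almost-all-trees-prop} (this is where (A1) and (A2) really enter, not in ruling out mass on collapsed leaves) says a.e.\ tree has one or two ends, and therefore each ergodic component of $S$ pulls back to at most two ergodic components of $\mathscr F$. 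Your symmetric/antisymmetric idea is not wasted, but it works one level up: on the transversal coordinates $(x_1,\ldots,x_6)$ of Theorem~\ref{typically-uniquely-ergodic-th}, the renormalization matrices $N_i=\mathbbm1_6+(M_i-\mathbbm1_3)\otimes B$ commute with the swap $\mathbbm1_3\otimes B$, so the cone of invariant measures of $T_\lambda^{\mathrm{AR}}$ carries a genuine involution; that is the paper's alternative proof of Corollary~\ref{two-measure-cor} via $\Psi$. But that involution is global (it exists because the nested cones $N_{i_1}\cdots N_{i_k}(\mathbb R^6_{>0})$ are swap-invariant), is special to $S_\lambda$, and is not the band-by-band swap $P\leftrightarrow Q$; it cannot replace the ends argument in the generality of the stated theorem.
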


We need some preparations to prove this theorem. They will also allow us to formulate a sufficient
condition for the equality $\dim\mathscr M(S)=\dim\mathscr M(\mathscr F)$.

Under the assumptions above,
there is a natural inclusion~$\mathscr M(S)\hookrightarrow\mathscr M(\mathscr F)$, which
we now describe.

We use the notation from Section~\ref{double-susp-sec}. For~$i\in\{1,2,\ldots,2k\}$, let~$\alpha_i\subset\Sigma$
be the image of the straight line segment~$I_i\times\{\theta_i-\varepsilon\}$
under the identifications made on Step~3 of the construction of~$\Sigma$.
By construction, $\Sigma$ is obtained from a subset of~$D\times\mathbb S^1$ by
identifications. The projection of this subset to the first factor
gives rise to a map~$p:\Sigma\rightarrow D$ which is multivalued on~$\bigcup_{i=1}^{2k}\alpha_i$.

Clearly, the preimage~$p^{-1}(X)$ of any invariant subset~$X\subset D$ of~$S$ is
a union of leaves of~$\mathscr F$.
Therefore, the pullback of any invariant measure~$\mu\in\mathscr M(S)$ under~$p$
is a transverse invariant measure on~$(\Sigma,\mathscr F)$, which we denote by~$\mu^\dag$.
The following statement is also obvious.

\begin{lemma}
The map~$\mathscr M(S)\rightarrow\mathscr M(\mathscr F)$ that takes any~$\mu\in\mathscr M(S)$
to~$\mu^\dag$ is injective.
\end{lemma}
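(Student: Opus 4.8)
The plan is to reconstruct $\mu$ from $\mu^\dag$ by evaluating the latter on a single, carefully chosen transversal of $\mathscr F$; injectivity then follows because two distinct invariant measures of $S$ must already disagree on subintervals of $D$, hence give distinct transverse measures.

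First I would select a convenient transversal. The leaves of $\mathscr F$ are inherited from the circle-foliation $\mathscr F_0$ of $\Sigma_0=D\times\mathbb S^1$, so every horizontal level $D\times\{t\}$ is transverse to $\mathscr F$. The only modifications of $\Sigma_0$ that could spoil such a level are the rectangles removed on Step~2 and the gluings performed on Step~3, all of which occur at heights lying in one of the $2k$ intervals $[\theta_i-\varepsilon,\theta_i+\varepsilon]$. These intervals are pairwise disjoint by the choice of $\varepsilon$ in the construction of $\Sigma$, so their complement in $\mathbb S^1$ is nonempty, and I pick a height $t$ there. For such a $t$, the level $D\times\{t\}$ meets none of the arcs $\alpha_i$, so $p$ is single-valued on it; in fact $p$ restricts there to the projection $(x,t)\mapsto x$, a bijection onto $D$. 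Thus $D\times\{t\}$ descends to an embedded transversal $\tau_t\subset\Sigma$ (only its endpoints over $\partial D$ are affected, via the collapsing of Step~4).

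Next I would read off $\mu^\dag$ on subarcs of $\tau_t$. Since $\mu^\dag=p^\ast\mu$ and $p|_{\tau_t}$ is the identity onto $D$, the arc $J\times\{t\}\subset\tau_t$ carries $\mu^\dag$-measure equal to $\mu(J)$ for every interval $J\subset D$. In other words, the restriction of $\mu^\dag$ to $\tau_t$, transported back along $p$, is exactly $\mu$.

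Finally, injectivity is immediate: if $\mu,\nu\in\mathscr M(S)$ satisfy $\mu^\dag=\nu^\dag$, then $\mu(J)=\mu^\dag(J\times\{t\})=\nu^\dag(J\times\{t\})=\nu(J)$ for every interval $J\subset D$, and since a Borel measure on the one-dimensional set $D$ is determined by its values on intervals, $\mu=\nu$. The only point that needs genuine care—and the closest thing to an obstacle—is verifying that the chosen level is untouched by the Step~3 identifications and the Step~4 collapsings, i.e.\ that $p$ is single-valued and measure-transporting off the arcs $\bigcup_i\alpha_i$; but this is precisely the already-recorded fact that $p$ is multivalued only on $\bigcup_{i=1}^{2k}\alpha_i$, which sit at the excluded heights $\theta_i\pm\varepsilon$.
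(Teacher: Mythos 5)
Your proof is correct. The paper in fact offers no argument at all for this lemma---it is introduced with the phrase ``the following statement is also obvious''---so your write-up supplies exactly the justification the authors left implicit: choose a height $t\in\mathbb S^1$ outside the $2k$ disjoint intervals $[\theta_i-\varepsilon,\theta_i+\varepsilon]$ where all the surgery of Steps 2--4 takes place, observe that the level $D\times\{t\}$ survives into $\Sigma$ as a transversal on which $p$ restricts to a bijection onto $D$, and recover $\mu(J)=\mu^\dag(J\times\{t\})$ for every interval $J\subset D$; since a Borel measure on $D$ is determined by its values on intervals, $\mu^\dag$ determines $\mu$. This is the natural reading of the paper's ``obvious'' and there is nothing to add, except perhaps to note explicitly that the equality $\mu^\dag(J\times\{t\})=\mu(J)$ is immediate from the definition of $\mu^\dag$ as the pullback of $\mu$ under $p$, precisely because $p$ is single-valued on your chosen level.
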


Now we recall some general facts about the orbit structure graph~$\Gamma(S)$.

\begin{proposition}\label{almost-all-trees-prop}
If~$S$ is balanced and of thin type, then all but finitely many components of~$\Gamma(S)$
are infinite trees with at most two topological ends.
\end{proposition}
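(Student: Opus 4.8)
The plan is to exploit the structure of a balanced thin-type system of isometries via the Rips machine renormalization that was already used in the proof of Theorem~\ref{s-lambda-th}. The key idea is that thin type means \emph{every} component of $\Gamma(S)$ is quasi-isometric to an infinite tree that is not a line, while balancedness rules out the two degenerate behaviors (compact components and non-simply-connected components) exactly as in part~(ii) of that proof. So the content of the proposition is really a statement about the \emph{number of topological ends} of these trees: I want to show that only finitely many components can have more than two ends, equivalently that a component with a branch point (a vertex of valence $\geqslant 3$ in the tree) is exceptional.

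First I would recall that the number of distinct valence configurations of $\Gamma(S)$ is controlled by the intervals $I_i$ and the endpoints of the support $D$. A vertex $x$ has high valence precisely when $x$ lies in the overlap of several of the subintervals $A_j,B_j$ (so that multiple partial isometries act at $x$), and such overlaps are governed by finitely many interval endpoints. The natural move is to run the Rips machine (with simplifications) one step at a time: each step replaces $S$ by a scaled copy $S_{\lambda'}$ living on a shorter support, and the map on orbit structure graphs is a quasi-isometry $\Gamma(S)\to\Gamma(S')$ that preserves the large-scale tree structure and in particular the end structure of each component. Branch points can only be created at the finitely many distinguished points (images of endpoints of the $I_i$ and of $\partial D$), so at each step only finitely many new branching events occur; the self-similar nature of thin type means that after renormalizing these distinguished points get pushed around but do not proliferate.

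The cleanest way to organize the argument is to identify the finitely many ``singular'' leaves/orbits. Under the double suspension correspondence, branch vertices of $\Gamma(S)$ correspond to singular orbits of the system, i.e. orbits meeting the endpoints of the intervals $I_i$ or of $D$. There are only finitely many such endpoints, hence only finitely many singular orbits, and hence only finitely many components of $\Gamma(S)$ containing a branch point. Every remaining component is an infinite tree all of whose vertices have valence $\leqslant 2$ away from a bounded region, which forces it to be quasi-isometric to a half-line or a line --- that is, it has at most two ends. I would make this precise by the standard fact that an infinite tree with only finitely many vertices of valence $\geqslant 3$ has a number of ends equal to one plus the total excess valence, and with zero such vertices it is just $\mathbb R$ or a ray.

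The main obstacle I anticipate is bookkeeping the branch vertices under renormalization rather than at a single snapshot: a priori one might worry that iterating the Rips machine infinitely often (as happens in the thin-type case) creates infinitely many branch points in the limit. The resolution is that each branching event is tied to a \emph{coincidence} between orbit points and interval endpoints, the set of such coincidences is controlled by the finitely many endpoints of the defining data, and the quasi-isometries induced by the machine identify these across levels rather than accumulating them. So the crux is to argue that the total number of branch vertices summed over all components is finite (bounded in terms of the combinatorics of $X$ and $\partial D$), and then the ``at most two ends'' conclusion for all but finitely many components follows formally from the tree structure. I expect the finiteness count to be the step requiring the most care, while the passage from ``$\leqslant 2$ branch-free'' to ``$\leqslant 2$ ends'' is routine.
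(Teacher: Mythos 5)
Your argument has a fatal gap at exactly the step you flag as ``requiring the most care,'' and the gap is not a matter of bookkeeping: the claim that branch vertices of~$\Gamma(S)$ occur only on the orbits of the finitely many endpoints of the~$I_i$ and of~$\partial D$ is false. By Definition~\ref{orbit-str-graph-def}, the valence of a vertex~$x$ equals the number of intervals among~$A_1,B_1,\ldots,A_n,B_n$ containing~$x$ (a partial isometry or its inverse acts at~$x$ exactly when~$x$ lies in its domain or range), so the branch locus~$\{x\in D:\ \text{valence of }x\geqslant3\}$ is a finite union of intervals determined by the \emph{overlaps} of the~$I_i$, and in general it has positive length, not finite cardinality. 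For the very systems this paper studies the branch locus has nonempty interior: in~$S_\lambda$ of~\eqref{S-lambda-eq} with~$\lambda\in\rg_{\mathrm{irr}}$, every~$x\in[0,\min_i\lambda_i)$ lies in all three intervals~$[0,\lambda_1]$, $[0,\lambda_2]$, $[0,\lambda_3]$ and has the three distinct neighbours~$x+1-\lambda_i$. Since thin type presupposes minimality, every orbit is dense in~$[0,1]$, so \emph{every} component of~$\Gamma(S_\lambda)$ contains infinitely many vertices of valence~$\geqslant3$. Hence the statement you reduce the proposition to --- only finitely many components contain a branch vertex --- is simply untrue, and no refinement of the Rips-machine bookkeeping can establish it. Your auxiliary ``standard fact'' (number of ends $=1+$ total excess valence) fails for the same reason: a branch at a trivalent vertex may be a \emph{finite} subtree and then contributes no end. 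This is not a pedantic point; it is precisely where the content of the proposition lives: branch vertices are ubiquitous, but all but finitely many of the branches they open are finite, so that at most finitely many components carry three or more topological ends.

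Note also that the paper does not prove this proposition at all: it quotes it from Gaboriau~\cite{G} (see also~\cite{BF}), where it is a genuinely nontrivial theorem about the ends of orbits. Gaboriau's argument is global, not pointwise: working with the suspension band complex, it bounds the total number of ends in excess of two, summed over \emph{all} orbits, using the dynamics of the Rips machine together with the balanced hypothesis, which plays a measure-theoretic (Euler-characteristic) role and is also what excludes compact and non-simply-connected components. Any correct write-up must either reproduce a counting argument of that kind or cite it; an approach that tries to localise branch vertices on finitely many singular orbits cannot be repaired.
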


The proof can be found in~\cite{G} or \cite{BF}.

\begin{definition}
A subset~$X\subset D$ is called \emph{negligible} if~$\mu(X)=0$
for any~$\mu\in\mathscr M(S)$.

A union of leaves of~$\mathscr F$ is called \emph{negligible} if
its intersection with any transversal has zero measure with respect
to any measure from~$\mathscr M(\mathscr F)$.
\end{definition}

Let~$D_1$ (respectively, $D_2$) be the set of~$x\in D$ such the graph~$\Gamma_x(S)$
is a one-ended (respectively, two-ended) tree and~$O_x(S)$ is disjoint from the boundaries of~$I_i$, $i=1,2,\ldots,2k$.
It follows form Proposition~\ref{almost-all-trees-prop}
that the complement~$D\setminus(D_1\cup D_2)$ is countable, and hence, negligible.

\begin{lemma}\label{number-of-leaves-lem}
If~$x\in D_m$, $m=1,2$, then~$p^{-1}(O_x(S))$ consists of exactly~$m$ regular leaves of~$\mathscr F$.
\end{lemma}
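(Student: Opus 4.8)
The plan is to reinterpret the restriction of~$\mathscr F$ to~$p^{-1}(O_x(S))$ in terms of the ribbon-graph (fat-graph) structure that the enhancement~$\theta$ induces on the orbit structure graph~$\Gamma_x(S)$, and then to count boundary components. First I would analyse the local picture over a single orbit point~$y\in O_x(S)$. Since~$x\in D_m$, the orbit is disjoint from all the~$\partial I_i$, so the number~$d(y)=\#\{i:y\in I_i\}$ is well defined, and it equals the degree of~$y$ in~$\Gamma_x(S)$, because each interval~$I_i\ni y$ contributes exactly one edge at~$y$, namely to~$t_{I_i,I_{i'}}(y)$ with~$\{I_i,I_{i'}\}\in X$. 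After Step~2 the circle~$\{y\}\times\mathbb S^1$ is cut into exactly~$d(y)$ closed arcs, one between each pair of consecutive removed sub-arcs in the cyclic order of the heights~$\theta_i$ (over those~$i$ with~$y\in I_i$). I would identify these~$d(y)$ arcs with the~$d(y)$ corners at the vertex~$y$ of~$\Gamma_x(S)$, the cyclic order of corners being the one dictated by the~$\theta_i$, i.e.\ by the enhancement.

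The key step is to show that following a leaf is the same as walking along the boundary of this ribbon graph. Orienting~$\mathscr F$ upward, a leaf travelling along an arc over~$y$ reaches a lower endpoint~$\theta_i-\varepsilon$ of a removed sub-arc; the Step~3 identification glues this endpoint to the upper endpoint~$\theta_{i'}+\varepsilon$ over the~$S$-neighbour~$t_{I_i,I_{i'}}(y)$, where~$\{I_i,I_{i'}\}\in X$. Hence the leaf continues over~$t_{I_i,I_{i'}}(y)$ on the arc lying immediately above~$\theta_{i'}$ in the cyclic order there. This is precisely the next-corner rule of a boundary walk in a ribbon graph, and the second identification of Step~3 implements the reverse move. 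Consequently each leaf meeting~$p^{-1}(O_x(S))$ traces out one boundary component of the ribbon-graph thickening of~$\Gamma_x(S)$, every corner (arc) being used exactly once; so the number of such leaves equals the number of boundary components of the thickening.

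It then remains to count these boundary components. The thickening~$R$ of~$\Gamma_x(S)$ is an orientable surface with boundary that deformation retracts onto the tree~$\Gamma_x(S)$, hence is contractible and planar; by the hypothesis~$x\in D_m$ together with Proposition~\ref{almost-all-trees-prop} it has exactly~$m$ topological ends. Topologically~$R$ is a closed~$2$-disc with one boundary point deleted for each end, and deleting~$m$ points from a circle leaves~$m$ arcs, so~$R$ has exactly~$m$ (non-compact) boundary components. Therefore~$p^{-1}(O_x(S))$ consists of exactly~$m$ leaves. These leaves are regular: the singular leaves of~$\mathscr F$ come only from the circles and arcs collapsed in Step~4, all of which lie over~$\partial D$ or over some~$\partial I_i$, and~$O_x(S)$ avoids these by the definition of~$D_m$.

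The main obstacle is the bookkeeping in the local-to-global correspondence of the second paragraph: one must verify carefully that the cyclic order of the arcs over each orbit point really matches the rotation system prescribed by~$\theta$, and that the two identifications of Step~3 together implement the forward and backward next-corner moves consistently with the orientation of~$\mathscr F$. Once this rotation-system identification is pinned down, the reduction to counting boundary components of a thickened tree, and the elementary fact that an~$m$-ended contractible planar ribbon has exactly~$m$ boundary lines, are routine.
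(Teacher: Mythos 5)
Your proposal is correct and follows essentially the same route as the paper: the paper also turns $\Gamma_{x}(S)$ into a ribbon graph via $\theta$, encodes each leaf in $p^{-1}(O_x(S))$ as a biinfinite sequence of half-edges obeying exactly your ``next-corner'' rule (its conditions (C1)--(C3)), and then identifies such sequences up to shift with the boundary components of a regular neighborhood of the tree properly embedded in the plane, whose number equals the number of ends. The only cosmetic difference is that you count boundary lines of the abstract thickening (disc minus $m$ boundary points) while the paper counts components of $\partial U$ for a planar regular neighborhood $U$; these are the same argument.
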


\begin{proof}
Specifying the enhancement~$\theta$ of the system of isometries~$S$ turns the orbit structure graph~$\Gamma(S)$
into a \emph{ribbon graph}, that is, a graph with a cyclic ordering of half-edges at every vertex.
This is done as follows.

The half-edges of~$\Gamma(S)$ are in an obvious one-to-one correspondence with pairs~$(x,i)$,
where~$x\in D$, and~$i\in\{1,2,\ldots,2k\}$ are such that~$x\in I_i$. Two half-edges~$(x,i)$ and~$(y,j)$
form an edge if and only if~$t_{I_i,I_j}\in S$ and~$t_{I_i,I_j}(x)=y$, and this edge joins the vertices~$x$ and~$y$.
Thus, the half-edges at a vertex~$x$ are all pairs~$(x,i)$ with~$I_i\ni x$. The cyclic ordering of these
half-edges is obtained by restricting~$\theta$ to the subset~$\{i:x\in I_i\}\subset\{1,2,\ldots,2k\}$.
This restriction will be denoted by~$\theta|_x$.

Now fix~$x_*\in D_1\cup D_2$ and pick a leaf~$\gamma$ of~$\mathscr F$ such that~$\gamma\subset p^{-1}(x_*)$.
Let
$$\ldots,A_{-1},A_0,A_1,A_2,\ldots$$
be the intersection points of~$\gamma$ with~$\cup_{i=1}^{2k}\alpha_i$
numbered in the order they follow on~$\gamma$. 
For every~$i\in\mathbb Z$, the point~$A_i$
has two preimages in~$\Sigma_1$, which have the form
$$(x_i,\theta_{l_i}-\varepsilon)\quad\text{and}\quad(y_i,\theta_{m_i}+\varepsilon),$$
such that the following conditions hold:
\begin{enumerate}
\item[(C1)] $x_i\in I_{l_i}\cap O_{x_*}(S)$;
\item[(C2)] $y_i=x_{i+1}=t_{I_{l_i},I_{m_i}}(x_i)$;
\item[(C3)] the interval~$I_{l_i}$ is next to~$I_{m_{i-1}}$ with respect to the cyclic ordering~$\theta|_{x_i}$.
\end{enumerate}

Let
$$\ldots,h_{-1},h_0,h_1,h_2,\ldots$$
be the biinfinite sequence of half-edges of~$\Gamma(S)$ defined by~$h_{2i}=(x_i,l_i)$, $h_{2i+1}=(y_i,m_i)$.
The the conditions~(C1--C3) above can be reformulated as follows:
\begin{equation}\label{half-edge-seq-eq}
h_{2i}\text{ and }h_{2i+1}\text{ form an edge of }\Gamma_{x_*}(S),\text{ and }h_{2i}\text{ is next to }h_{2i-1}
\text{ w.r.t. }\theta|_{x_i}.
\end{equation}

One can see that Condition~\eqref{half-edge-seq-eq} is also sufficient for a biinfinite sequence~$(h_i)_{i\in\mathbb Z}$
to define a leaf of~$\mathscr F$ contained in~$p^{-1}(x_*)$, and another such sequence
defines the same leaf if and only if it is obtained from~$(h_i)_{i\in\mathbb Z}$ by a shift.

We now claim that the number of biinfinite sequences satisfying~\eqref{half-edge-seq-eq}, viewed
up to a shift,
is the same as the number of topological ends of~$\Gamma_{x_*}(S)$ (this holds true
for any infinite tree in place of~$\Gamma_{x_*}(S)$). Indeed, the graph~$\Gamma_{x_*}(S)$
can be embedded in the plane~$\mathbb R^2$ so that
\begin{enumerate}
\item
the inclusion map~$\Gamma_{x_*}(S)\hookrightarrow\mathbb R^2$ is proper;
\item
the cyclic order of half-edges at every vertex~$x$ of~$\Gamma_{x_*}(S)$ coincides
with the one induced by the orientation of~$\mathbb R^2$.
\end{enumerate}

Let~$U\subset\mathbb R^2$ be a closed regular neighborhood of~$\Gamma_{x_*}(S)$.
The number of topological ends of~$\Gamma_{x_*}(S)$ is the same as the number of
connected components of~$\partial U$, which are put into a correspondence
with sequences satisfying~\eqref{half-edge-seq-eq}, viewed up to a shift, as follows.

Let~$\beta$ be a connected component of~$\partial U$. Then the connected component
of~$U\setminus\Gamma_{x_*}(S)$ is a strip homeomorphic to~$(0,1]\times\mathbb R$
attached to~$\Gamma_{x_*}(S)$ along a path, which we denote by~$\widehat\beta$.
The decomposition of~$\widehat\beta$ into half-edges gives rise to a sequence
satisfying~\eqref{half-edge-seq-eq}. One can see that thus obtained correspondence
between connected components of~$\partial U$ and sequences satisfying~\eqref{half-edge-seq-eq}
and viewed up to a shift is again one-to-one.

The claim follows.
\end{proof}

\begin{proof}[Proof of Theorem~\ref{ergodicity-dim-th}]
For any leaf~$\gamma$ of the foliation~$\mathscr F$, we define~$\gamma^\vee$
as follows. Let~$x\in D$ be such that~$p(\gamma)\subset O_x(S)$.
If~$x\notin D_2$ we put~$\gamma^\vee=\gamma$. Otherwise,
we put~$\gamma^\vee=p^{-1}(O_x(S))\setminus\gamma$.

It follows from Lemma~\ref{number-of-leaves-lem} that~$\gamma^\vee$ is also a leaf of~$\mathscr F$
and~$(\gamma^\vee)^\vee=\gamma$.

If~$X\subset\Sigma$ is a union of leaves of~$\mathscr F$ we define~$X^\vee$ to
be~$\bigcup\gamma^\vee$, where the union is taken over all leaves~$\gamma$ of~$\mathscr F$
contained in~$X$.

Let~$E$ be an ergodic component of~$S$. Since both~$D_1$ and~$D_2$ are, clearly, invariant subsets
of~$D$, and the complement~$D\setminus(D_1\cup D_2)$ is negligible, we may assume~$E\subset D_1$
or~$E\subset D_2$.

According to Lemma~\ref{number-of-leaves-lem},
if~$E\subset D_1$, then the preimage of any orbit~$O_x(S)$ with~$x\in E$ consists of exactly one leaf
of~$\mathscr F$. Therefore, $p^{-1}(E)$ is an ergodic component of~$\mathscr F$.

If~$E\subset D_2$, then the preimage~$p^{-1}(E)$ is a union of leaves of~$\mathscr F$,
but not necessarily
a single ergodic component. Suppose that~$E_1\subset p^{-1}(E)$ is an ergodic component of~$\mathscr F$,
and let~$E_2=E_1^\vee$.

We may assume that, for any~$x\in E$, the preimage~$p^{-1}(O_x(S))$ contains at least one leaf
of~$\mathscr F$ lying in~$E_1$. Indeed, the subset~$p(E_1)\subset D$ cannot be negligible, therefore,
negligible must be the subset~$Y=\{x\in D:p^{-1}(O_x(S))\cap E_1=\varnothing\}$, and hence~$p^{-1}(Y)$
is also negligible. So, we can replace~$E_1$ by~$E_1\cup p^{-1}(Y)$.

By Lemma~\ref{number-of-leaves-lem} we have~$E_1\cup E_2=p^{-1}(E)$.

Thus, we see that the full preimage of any ergodic component of~$S$ is either a single ergodic component
of~$\mathscr F$ or consists of exactly two ergodic components of~$\mathscr F$. The claim follows.\end{proof}

From the proof of Theorem~\ref{ergodicity-dim-th} one can also see the following.

\begin{proposition}
If the subset~$D_2\subset D$ is negligible, then~$\dim\mathscr M(S)=\dim\mathscr M(\mathscr F)$.
In particular, if~$D_2\subset D$ is negligible and~$S$ is uniquely ergodic, then~$\mathscr F$
is also uniquely ergodic.
\end{proposition}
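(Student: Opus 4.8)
The plan is to read the statement directly off the ergodic-component correspondence established in the proof of Theorem~\ref{ergodicity-dim-th}, specializing it to the case in which the doubling phenomenon is suppressed. Recall that $\dim\mathscr M(S)$ and $\dim\mathscr M(\mathscr F)$ count the ergodic components of $S$ and of $\mathscr F$, respectively (the extreme rays of the cones of invariant measures). The proof of Theorem~\ref{ergodicity-dim-th} shows that every ergodic component of $S$, which one may take to lie in $D_1$ or in $D_2$, pulls back under $p$ to a single ergodic component of $\mathscr F$ when it is contained in $D_1$, and to one or two ergodic components of $\mathscr F$ when it is contained in $D_2$; moreover these preimages exhaust the ergodic components of $\mathscr F$. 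Writing $k_1$ and $k_2$ for the numbers of ergodic components of $S$ lying in $D_1$ and in $D_2$, this yields $\dim\mathscr M(S)=k_1+k_2$ and $\dim\mathscr M(\mathscr F)=k_1+\sum m_E$, where the sum runs over the $D_2$-components and each $m_E\in\{1,2\}$. In particular one sees $\dim\mathscr M(S)\leqslant\dim\mathscr M(\mathscr F)$ for free, since every $m_E\geqslant1$.

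First I would observe that the hypothesis forces $k_2=0$. Indeed, if some ergodic component $E$ of $S$ were contained in $D_2$, then its ergodic measure $\mu$ would satisfy $\mu(D_2)\geqslant\mu(E)>0$, contradicting the negligibility of $D_2$. Hence no ergodic measure of $S$ charges $D_2$, and every ergodic component of $S$ may be taken inside $D_1$.

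With $k_2=0$ the sum $\sum m_E$ is empty, and each of the $k_1$ ergodic components of $S$ contributes exactly one ergodic component of $\mathscr F$ by Lemma~\ref{number-of-leaves-lem}. The correspondence between ergodic components thus becomes a bijection, and we conclude $\dim\mathscr M(\mathscr F)=k_1=\dim\mathscr M(S)$. The ``in particular'' assertion is then immediate: if $S$ is uniquely ergodic then $\dim\mathscr M(S)=1$, whence $\dim\mathscr M(\mathscr F)=1$, i.e. $\mathscr F$ is uniquely ergodic.

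The one point that requires care---and which I would invoke from the proof of Theorem~\ref{ergodicity-dim-th} rather than reprove---is that the preimages $p^{-1}(E)$ over ergodic components $E$ of $S$ already account for every ergodic component of $\mathscr F$; equivalently, that $p^{-1}$ of an $S$-negligible set is $\mathscr F$-negligible. This is exactly the ingredient the earlier proof uses to deduce its upper bound $\dim\mathscr M(\mathscr F)\leqslant2\dim\mathscr M(S)$, so it is already at our disposal. The only genuinely new input is the elementary observation of the second paragraph: negligibility of $D_2$ removes every $D_2$-component from the bookkeeping and thereby collapses the factor two to a factor one.
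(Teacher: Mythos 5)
Your proof is correct and follows exactly the route the paper intends: the paper derives this proposition directly from the proof of Theorem~\ref{ergodicity-dim-th}, and your argument is precisely that specialization --- negligibility of $D_2$ rules out ergodic components of $S$ inside $D_2$, so the component correspondence established there (one leaf-preimage per $D_1$-component, via Lemma~\ref{number-of-leaves-lem}) becomes a bijection. The point you flag for care (that the preimages $p^{-1}(E)$ exhaust the ergodic components of $\mathscr F$) is indeed the same ingredient the theorem's proof relies on, so invoking it is exactly what the paper does.
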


It remains an open question wether the unique ergodicity of~$\mathscr F$ imply the negligibility of~$D_2$.

We have seen in Proposition~\ref{non-uniquely-ergodic-prop} that
the transformation~$T^{\mathrm{AR}}_\lambda$ may not be uniquely ergodic
for~$\lambda\in\rg_{\mathrm{irr}}$. The genus of the respective
surface~$\Sigma_\lambda^{\mathrm{AR}}$ is three, so, in principle, one
might expect~$T^{\mathrm{AR}}_\lambda$ to have three ergodic components.
However, the following statement, which follows from Theorems~\ref{s-lambda-th}
and~\ref{ergodicity-dim-th} implies that this is impossible.

\begin{corollary}\label{two-measure-cor}
If~$\lambda\in\rg_{\mathrm{irr}}$, then the Arnoux--Rauzy transformation~$T_\lambda^{\mathrm{AR}}$
has at most two distinct ergodic Borel probability measures.
\end{corollary}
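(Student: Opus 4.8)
The plan is to deduce the corollary directly from Theorems~\ref{s-lambda-th} and~\ref{ergodicity-dim-th}; since both theorems have already been proved, the real content is only to check that their hypotheses apply to the system~$S_\lambda$ and to match the ergodic theory of the double suspension foliation with that of the transformation~$T_\lambda^{\mathrm{AR}}$.

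First I would fix~$\lambda\in\rg_{\mathrm{irr}}$ and invoke Theorem~\ref{s-lambda-th}(ii), which says that~$S_\lambda$ is of thin type and uniquely ergodic. This supplies assumption~(A2) together with the ``$S$ uniquely ergodic'' hypothesis of Theorem~\ref{ergodicity-dim-th}. As recorded in the paragraph preceding that theorem, the balance assumption~(A1) likewise holds for~$S_\lambda$ whenever~$\lambda\in\rg_{\mathrm{irr}}$, so the two standing assumptions of Theorem~\ref{ergodicity-dim-th} are both satisfied by~$S_\lambda$.

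Next I would identify the relevant foliation and transfer the ergodic bookkeeping. Let~$(\Sigma,\mathscr F)$ be the double suspension surface of the enhanced system~$(S_\lambda,\theta_\lambda)$. By Proposition~\ref{double=ar-prop}, $(\Sigma,\mathscr F)$ is equivalent to~$(\Sigma^{\mathrm{AR}}_\lambda,\mathscr F^{\mathrm{AR}}_\lambda)$ after doubling the transverse measure, and the Poincar\'e map of~$\mathscr F$ on the transversal~$\gamma$ exhibited there is precisely~$T^{\mathrm{AR}}_\lambda$. Hence the invariant transverse measures of~$\mathscr F$ are in natural affine bijection with the~$T^{\mathrm{AR}}_\lambda$-invariant Borel measures on~$\mathbb S^1$, and this bijection carries ergodic objects to ergodic objects, so the ergodic components of~$\mathscr F$ correspond exactly to the ergodic invariant measures of~$T^{\mathrm{AR}}_\lambda$. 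Since~$\lambda\in\rg_{\mathrm{irr}}$, Proposition~\ref{AR-minimal-prop} guarantees that~$T^{\mathrm{AR}}_\lambda$ is minimal, so every such measure is non-atomic, fully supported, and may be normalised to a probability measure; thus counting ergodic components of~$\mathscr F$ is the same as counting distinct ergodic Borel probability measures of~$T^{\mathrm{AR}}_\lambda$.

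Finally I would apply Theorem~\ref{ergodicity-dim-th}: as~$S_\lambda$ is uniquely ergodic, the theorem yields that~$\mathscr F$ has at most two ergodic components (equivalently~$\dim\mathscr M(\mathscr F)\leqslant 2\dim\mathscr M(S_\lambda)=2$). Translating back through the correspondence of the previous paragraph, $T^{\mathrm{AR}}_\lambda$ admits at most two distinct ergodic Borel probability measures, which is the assertion. The only step that is not a bare citation is the matching in the third paragraph—verifying that passing from the foliation~$\mathscr F$ on~$\Sigma$ to its Poincar\'e section along~$\gamma$ genuinely preserves the ergodic decomposition—so I expect that identification to be where the small amount of care is needed, while the two quoted theorems do all the substantive work.
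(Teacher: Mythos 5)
Your proposal is correct and follows exactly the paper's own route: the paper derives this corollary precisely by combining Theorem~\ref{s-lambda-th}(ii) (which gives assumption~(A2) and unique ergodicity of~$S_\lambda$, with~(A1) noted to hold for~$S_\lambda$ when~$\lambda\in\rg_{\mathrm{irr}}$) with Theorem~\ref{ergodicity-dim-th}, using Proposition~\ref{double=ar-prop} to identify the double suspension foliation with~$\mathscr F_\lambda^{\mathrm{AR}}$ and hence with~$T_\lambda^{\mathrm{AR}}$. The only difference is that you spell out the (standard, and in the paper implicit) correspondence between transverse invariant measures of the foliation and invariant measures of its Poincar\'e map, which is a reasonable point of care but not a departure from the paper's argument; the paper also sketches an independent second proof via an involution on~$\mathscr M(\mathscr F_\lambda^{\mathrm{AR}})$, which you do not need.
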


This corollary can be also proved directly, without a reference to
systems of isometries. This is done
as follows.

There is an involution~$\Psi$ on~$\mathscr M(\mathscr F_\lambda^{\mathrm{AR}})$
defined in terms of the parameters~$x_1,\ldots,x_6$
introduced in the proof of Theorem~\ref{typically-uniquely-ergodic-th}
by the permutation~$(12)(34)(56)$.
If~$\lambda\in\mathscr R_{\mathrm{irr}}$, then any measure~$\mu\in\mathscr
M(\mathscr F_\lambda^{\mathrm{AR}})$ invariant
under~$\Psi$ can be identified with an invariant measure
on the symbolic dynamical system~$(\Omega(\lambda),\mathscr S)$ introduced
in the proof of Theorem~\ref{ar-words-th}, where the unique
ergodicity of this system is also established.

Thus, the multiplicity of the eigenvalue~$1$ of~$\Psi$ is one, whereas
the multiplicity of~$-1$ cannot be larger. Hence,~$\dim\mathscr M(\mathscr F_\lambda^{\mathrm{AR}})\leqslant2$.

This means that the two invariant measures constructed in the
proof of Proposition~\ref{non-uniquely-ergodic-prop} for specific~$\lambda$ represent two ergodic
components of~$T^{\mathrm{AR}}_\lambda$, and the respective
ergodic components~$E_1,E_2$ of~$\mathscr F_\lambda$ satisfy~$E_2=E_1^\vee$ (see
the proof of Theorem~\ref{ergodicity-dim-th} for this notation).

\end{document}